\newtheorem{theorem}{Theorem}
\newtheorem{definition}[theorem]{Definition}
\newtheorem{proposition}[theorem]{Proposition}
\newtheorem{lemma}[theorem]{Lemma}
\newtheorem{corollary}[theorem]{Corollary}
\newtheorem{remark}[theorem]{Remark}
\numberwithin{equation}{section}
\numberwithin{theorem}{section}
\newcommand{\rd}{\mathrm{d}}
\newcommand{\RR}{\mathbb{R}}
\newcommand{\N}{\mathbb{N}}
\newcommand{\C}{\mathbb{C}}
\newcommand{\Z}{\mathbb{Z}}
\newcommand{\eps}{\varepsilon}
\newcommand{\supp}{{\rm supp\ }}
\newcommand{\resc}{{\bf resc}}
\newcommand{\bW}{{\bf W}}
\newcommand{\mU}{{\mathcal U}}
\newcommand{\mW}{{\mathcal W}}
\newcommand{\mV}{{\mathcal V}}
\newcommand{\mF}{{\mathcal F}}
\newcommand{\mFr}{{\mathcal F_{k,\resc}}}
\newcommand{\mG}{{\mathcal G}}
\newcommand{\mY}{{\mathcal Y}}
\newcommand{\mT}{{\mathcal T}}
\newcommand{\mTr}{{\mathcal{T}_{k, \resc}}}
\newcommand{\mX}{{\mathcal X}}
\title{Equilibria of homogeneous functionals in the fair-competition regime}
\author{V. Calvez, J. A. Carrillo, F. Hoffmann}
\begin{document}

\maketitle

\begin{abstract}
We consider macroscopic descriptions of particles where repulsion is modelled by non-linear power-law diffusion
and attraction by a homogeneous singular/smooth kernel leading to variants of the Keller-Segel model of chemotaxis. 
We analyse the regime in which both homogeneities scale the same with respect to dilations, that we coin as fair-competition. In the singular kernel case, we
show that existence of global equilibria can only happen at a certain critical value and they are characterised as optimisers of 
a variant of HLS inequalities. We also study the existence of self-similar solutions for the sub-critical case, or equivalently of optimisers
of rescaled free energies. These optimisers are shown to be compactly supported radially symmetric and non-increasing stationary solutions
of the non-linear Keller-Segel equation. On the other hand, we show that no radially symmetric non-increasing stationary solutions exist in the smooth kernel case, implying that there is no criticality. However, we show the existence of positive self-similar solutions for all values of the parameter under the condition that diffusion is not too fast.
We finally illustrate some of the open problems in the smooth kernel case by numerical experiments.
\end{abstract}


\section{Introduction}
\label{sec:Introduction}

The goal of this work is to investigate properties of the following class of homogeneous
functionals, defined for centered probability densities $\rho(x)$,
belonging to suitable $L^p$-spaces, and some interaction
strength coefficient $\chi> 0$ and diffusion power $m>0$:
\begin{align}
\mathcal F_{m,k}[\rho] &=  \int_{\RR^N}   U_m\left(\rho(x)\right)
\, dx + \chi  \iint_{\RR^N\times\RR^N} \rho(x)  W_k(x-y) \rho(y)\,
dxdy \nonumber\\
&:= \mathcal U_m[\rho] + \chi \mathcal W_k[\rho] \, ,
\label{eq:functional}\\
&\rho(x)\geq 0\, , \quad \int_{\RR^N}\rho(x)\, dx = 1\, , \quad
\int_{\RR^N} x\rho(x)\, dx = 0\,  , \nonumber
\end{align}
with
\vspace{-0.2cm}
\begin{equation*}
U_m(\rho) = \left\{\begin{array}{ll} \dfrac1{N(m-1)}\rho^m\, ,
&\mbox{if}\quad m\neq 1 \medskip\\ \dfrac 1N\rho\log \rho \, ,
&\mbox{if}\quad  m=1  \end{array}\right.\,, 
\end{equation*}
and
\vspace{-0.2cm}
\begin{equation*}
W_k(x) = \left\{\begin{array}{ll} \dfrac{|x|^k}k\, , &
\mbox{if}\quad k\in(-N,N)\setminus\{0\}
\medskip\\ \log |x|\, ,  & \mbox{if}\quad  k = 0
\end{array}\right.\, . 
\end{equation*}
The conditions on $k$ imply that the kernel $W_k(x)$ is locally
integrable in $\RR^N$. The center of mass is assumed
to be zero since the free energy functional is invariant by
translation.

There exists a strong link between the
aforementioned functional \eqref{eq:functional} and the following family
of partial differential equations modelling self-attracting
diffusive particles at the macroscopic scale,
\begin{equation}
\left\{
\begin{array}{l}
\partial_t \rho  =  \dfrac 1N \Delta\rho ^m +
2\chi \nabla\cdot \left( \rho \nabla S_k\right)\,
, \quad t>0\, , \quad x\in \RR^N\, , \smallskip\\
\displaystyle \rho(t=0,x) = \rho_0(x)\geq 0\, ,
\quad \int_{\RR^N}\rho_0(x)\, dx = 1\, ,
\quad \int_{\RR^N}x\rho_0(x)\, dx = 0\, ,
\end{array}
\right. \label{eq:KS}
\end{equation}
where we define the mean-field potential $S_k(x) := W_k(x)*\rho(x)$. For $k>1-N$, the gradient $\nabla S_k:= \nabla \left(W_k \ast \rho\right)$ is well defined.
For $-N<k\leq 1-N$ however, it becomes a singular integral, and we thus define it via a Cauchy principal value. Hence, the mean-field potential gradient in equation \eqref{eq:KS} is given by
\begin{equation}\label{gradS}
 \nabla S_k(x) :=
 \begin{cases}
  \nabla W_k \ast \rho\, ,
  &\text{if} \, \, k>1-N\, , \\[2mm]
  \displaystyle\int_{\RR^N} \nabla W_k (x-y)\left(\rho(y)-\rho(x)\right)\, dy\, ,
  &\text{if} \, \, -N<k\leq 1-N\, . 
 \end{cases}
\end{equation}
The noticeable characteristic of the class of PDEs \eqref{eq:KS} and the functional $\mF_{m,k}$ consists in the competition between the diffusion (possibly non-linear), and the non-local, quadratic non-linearity which is due to the
self-attraction of the particles through the mean-field potential $S_k$. The parameter $\chi>0$ measures the strength of the interaction and scales with the mass of solution densities.

The strong connection between the functional $\mathcal F_{m,k}$ and the
PDE \eqref{eq:KS} is due to the fact that the functional $\mathcal F_{m,k}$ is non-increasing along the
trajectories of the system. Namely $\mathcal F_{m,k}$ is the free
energy of the system and it satisfies at least formally
\begin{equation*}\label{eq:entropydissipation}
\dfrac{d}{dt}\mathcal{F}_{m,k}[\rho(t)] = -\int_{\RR^N}
\rho(t,x)\left| \nabla \left( \dfrac m{N(m-1)}\rho(t,x)^{m-1} +
2\chi W_k(x)*\rho(t,x) \right) \right|^2\,dx\,  .
\end{equation*}
Furthermore, the system \eqref{eq:KS} is the formal gradient flow
of the free energy functional \eqref{eq:functional} when the
space of probability measures is endowed with the Euclidean
Wasserstein metric $\bW$. This means that the family of PDEs \eqref{eq:KS}
can be written as
\begin{equation*}
\partial_t\rho(t)
=\nabla \cdot \left( \rho(t)\, \nabla \mathcal T_{m,k} [\rho(t)]\right)
=- \nabla_{\bf W} \mathcal F_{m,k}[\rho(t)]\, ,
\end{equation*}
where $\mathcal T_{m,k} [\rho]$ denotes the first variation of the energy functional in the set of probability densities:
\begin{equation}\label{eq:1stvar}
 \mathcal T_{m,k} [\rho](x) :=\frac{\delta \mF_{m,k}}{\delta \rho}[\rho](x) = \frac{m}{N(m-1)}\rho^{m-1}(x)+ 2 \chi W_k(x) \ast \rho(x)\, .
\end{equation}
This illuminating statement has been clarified in the seminal
paper by Otto \cite{Otto} for the porous medium equation, and generalized to a large family of equations
subsequently in \cite{CaMcCVi03,AmGiSa05,CaMcCVi06}, we refer to \cite{Villani03,AmGiSa05}
for a comprehensive presentation of this theory of gradient flows
in Wasserstein metric spaces, particularly in the convex case. Let us mention that such a gradient flow can be constructed as the
limit of discrete in time steepest descent schemes. Performing gradient flows of a convex functional is a
natural task, and suitable estimates from below on the right notion of Hessian of
$\mF_{m,k}$ translate into a rate of convergence
towards equilibrium for the PDE \cite{Villani03,CaMcCVi03,AmGiSa05}.
However, performing gradient flows of non-convex functionals
is  much more delicate, and one has to seek compensations. Such
compensations do exist in our case, and we will observe them at the level
of existence of minimisers for the free energy functional $\mF_{m,k}$ and stationary states
of the family of PDEs \eqref{eq:KS} in particular regimes.

The family of non-local problems \eqref{eq:KS} has been
intensively studied in various contexts arising in physics and
biology. The two-dimensional logarithmic case $(m = 1,k=0)$ is the
so-called Keller-Segel system in its simplest formulation
\cite{KeSe70,KeSe71a,Nanjundiah73,JaLu92,DoPe04,BlaDoPe06,Perthame06}. It
has been proposed as a model for chemotaxis in cell populations.
Cells may interact with each other by secreting a chemical substance to attract cells around them. This occurs for instance during the
starvation stage of the slime mold {\em Dyctiostelium discoideum}.
More generally, chemotaxis is widely observed in various biological
fields (morphogenesis, bacterial self-organization, inflammatory
processes among others). The two- and three-dimensional configurations with 
Newtonian interaction $(m = 1,k = 2-N)$ are the so-called Smoluchowski-Poisson system arising in
gravitational physics. It describes macroscopically a
density of particles subject to a self-sustained gravitational
field \cite{CLM,CM}.

Let us describe in more detail the two-dimensional Keller-Segel
system as the analysis of its peculiar structure will serve as a guideline to understand the other cases. 
In fact, the functional \eqref{eq:functional} $(m = 1,k = 0)$ is bounded from below if and only if $\chi = 1$. The gradient flow is also subject to a
remarkable dichotomy, well described mathematically. The density
exists globally in time if $\chi<1$ (diffusion overcomes self-attraction), whereas blow-up occurs in finite time when $\chi>1$ (self-attraction overwhelms diffusion). This transition has been first formulated in \cite{ChiPe81}. Mathematical contributions are \cite{JaLu92} for the existence part,
\cite{Nagai95} for the radial case, and \cite{DoPe04,BlaDoPe06} in the full space. The critical case $\chi=1$ was analyzed further in \cite{BlaCaMa07,BCC12,CF} in terms of stability of stationary states.

The effect of substituting linear diffusion by non-linear diffusion with $m>1$ in 
two dimensions and higher was described in \cite{CaCa06,Sugi1} where it is shown
that solutions exists globally in time for all values of the parameter $\chi>0$. The role of both non-linear diffusion and non-local agreggration terms was clarified in \cite{BCL}, see also \cite{Sugi2}, where the authors find that there is a similar dichotomy to the two-dimensional classical Keller-Segel case $(N=2,m = 1,k = 0)$, for a whole range of parameters, choosing the non-local term as the Newtonian potential, $(N\geq 3,m = 2- 2/N,k = 2-N)$. The main difference is that the stationary states found for the critical case are compactly supported. Choosing the non-local term as the Newtonian potential, this range of parameters can be understood as fixing the non-linear diffusion such that both terms in the functional $\mF_{m,k}$ scale equally for mass-preserving dilations. This mass-preserving dilation homogeneity of the functional $\mF_{m,k}$ is shared by the range of parameters $(m,k)$ with $N(m-1)+k=0$ for all  dimensions, $m>0$ and $k\in (-N,N)$. We call this range of parameters the fair-competition regime, since both terms are competing each other at equal foot. 

We will analyse the properties of the functional $\mF_{m.k}$ in relation to global minimisers and its relation to stationary
states of \eqref{eq:KS} in this work. We will first define properly the notion of stationary states to \eqref{eq:KS} and analyse
their basic properties in Section 2. We will also state and explain the main results of this work once the different regimes have
been introduced. We postpone further discussion of the related literature to Section 2. Section 3 is devoted to the fair-competition
regime with $k<0$ for which we show a similar dichotomy to \cite{BCL} in the whole range $k\in (-N,0)$ including the most singular
cases. We show that stationary states exist only for a critical value of $\chi$ and that they are compactly supported, bounded, radially symmetric decreasing and 
continuous functions. Moreover, we show that they are global minimisers of $\mF_{m.k}$. The sub-critical case is also analysed in
scaled variables and we show the existence of global minimisers with the properties above leading to the existence of self-similar
solutions in original variables. The critical parameter is characterised by a variant of HLS inequalities as in \cite{BCL}. Let us mention
that the regularity results need a careful treatment of the problem in radial coordinates involving non-trivial properties of hypergeometric
functions. The properties of the kernel in radial coordinates are postponed to the Appendix A.

In Section 4, we analyse the case $k>0$. Let us mention that there are no results in the literature to our knowledge concerning 
the case $k\in(0,N)$ in which $0<m=1-k/N<1$. There is one related result in \cite{CL} for the limiting case in one dimension taking $m=0$, corresponding to logarithmic diffusion, and $k=1$.
They showed that no criticality is present in that case as solutions to \eqref{eq:KS} with $(m = 0,k = 1)$ are globally defined in time for all values of the parameter $\chi>0$. 
We show that no radially symmetric non-increasing stationary states and no radially symmetric non-increasing global minimisers exist in original variables for all values of the critical parameter $\chi$ and for $k\in(0,N)$ while we show the existence of stationary states for all values of the 
critical parameter $\chi$ in scaled variables for $k\in (0,1]$. In this sense, we show that there is no criticality for $k>0$. 
A full proof of non-criticality involves the analysis of the minimisation problem in scaled variables as for $k<0$ showing that global minimisers exist 
in the right functional spaces for all values of the critical parameter and that they are indeed stationary states.  This is proven in one dimension in \cite{CCHCetraro} 
by optimal transport techniques and postponed for further future investigations in general dimension. We finally illustrate these results by numerical experiments 
in one dimension corroborating the absence of critical behavior for $k>0$.

\section{Stationary States \& Main results}
\label{sec:Preliminaries}

\subsection{Stationary States: Definition \& Basic Properties}
\label{sec:sstates}

Let us define precisely the notion of stationary states to the aggregation equation \eqref{eq:KS}.

\begin{definition}\label{def:sstates}
 Given $\bar \rho \in L_+^1\left(\RR^N\right) \cap L^\infty\left(\RR^N\right)$ with $||\bar \rho||_1=1$, it is a \textbf{stationary state} for the evolution equation \eqref{eq:KS} if $\bar \rho^{m} \in \mathcal{W}_{loc}^{1,2}\left(\RR^N\right)$, $\nabla \bar S_k\in L^1_{loc}\left(\RR^N\right)$, and it satisfies
 \begin{equation}\label{eq:steady}
  \frac{1}{N}\nabla \bar \rho^m=-2 \chi \, \bar \rho \nabla \bar S_k
 \end{equation}
in the sense of distributions in $\RR^N$. 
If $-N<k\leq 1-N$, we further require $\bar \rho \in C^{0,\alpha}\left(\RR^N\right)$ with $\alpha \in (1-k-N,1)$.
\end{definition}

We start by showing that the function $ S_k$ and its gradient defined in \eqref{gradS} satisfy even more than the regularity $\nabla  S_k\in L^1_{loc}\left(\RR^N\right)$ required in Definition \ref{def:sstates}.

\begin{lemma}\label{lem:regS} Let $\rho \in  L_+^1\left(\RR^N\right) \cap L^\infty\left(\RR^N\right)$ with $||\rho||_1=1$. If $0<k<N$, we additionally assume $|x|^k \rho \in L^1\left(\RR^N\right)$. Then the following regularity properties hold:
\begin{itemize}
\item[i)] $S_k \in L^\infty_{loc}\left(\RR^N\right)$ for $0<k<N$ and $ S_k \in L^{\infty}\left(\RR^N\right)$ for $-N<k<0$.
\item[ii)] $\nabla  S_k \in L^\infty_{loc}\left(\RR^N\right)$ for $1<k<N$ and $\nabla  S_k \in L^{\infty}\left(\RR^N\right)$ for $-N<k<0$ and $0<k\leq 1$, assuming additionally $\rho \in C^{0,\alpha}\left(\RR^N\right)$ with $\alpha \in (1-k-N,1)$ in the range $-N<k\leq 1-N$. 
\end{itemize}
\end{lemma}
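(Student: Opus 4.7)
The plan is to estimate the convolution integrals defining $S_k$ and $\nabla S_k$ by splitting the integration domain at a fixed radius, say $r=1$, and combining kernel estimates with either $\rho\in L^\infty$ on the near part or $\rho\in L^1$ on the far part. The four cases in the statement correspond exactly to which of these two bounds is decisive given the sign of $k$.

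For part (i), on the near region $\{|x-y|<1\}$ the potential $W_k$ is either continuous (for $k>0$) or integrable at the origin (for $-N<k<0$), so using $\|\rho\|_\infty<\infty$ the contribution is bounded by a constant depending only on $k$ and $N$. On the far region $\{|x-y|\geq 1\}$, when $k<0$ the kernel satisfies $|W_k(x-y)|\leq 1/|k|$ and $\|\rho\|_1=1$ yields $S_k\in L^\infty(\RR^N)$; when $0<k<N$ one uses the elementary inequality $|x-y|^k\leq C_k(|x|^k+|y|^k)$ together with $\|\rho\|_1$ and the moment assumption $|x|^k\rho\in L^1$ to obtain $|S_k(x)|\leq C(1+|x|^k)$, which is locally bounded.

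For part (ii), the starting point is $|\nabla W_k(z)|=|z|^{k-1}$. On the far region $\{|x-y|\geq 1\}$ one has $|x-y|^{k-1}\leq 1$ whenever $k\leq 1$, so $\|\rho\|_1$ controls the far contribution in $L^\infty$; for $1<k<N$ the splitting $|x-y|^{k-1}\leq C_k(|x|^{k-1}+|y|^{k-1})$ combined with the moment bound gives local boundedness. On the near region $\{|x-y|<1\}$, $\int_{|x-y|<1}|x-y|^{k-1}\,dy$ is finite if and only if $k>1-N$, so whenever this is the case the $\|\rho\|_\infty$ bound closes the argument directly, and all three ranges $1<k<N$, $0<k\leq 1$, and $1-N<k<0$ are handled.

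I expect the main difficulty to be the most singular range $-N<k\leq 1-N$, where $|\nabla W_k|$ fails to be locally integrable and no pointwise bound based on $\rho\in L^\infty$ alone can work. Here one must exploit the principal value formulation \eqref{gradS} together with the H\"older assumption $\rho\in C^{0,\alpha}$, $\alpha\in(1-k-N,1)$. The key observation is that $\nabla W_k$ is odd, so $\int_{\eps<|x-y|<1}\nabla W_k(x-y)\,dy=0$ by symmetry, and passing to the limit $\eps\to 0$ rewrites the principal value as
\begin{equation*}
\nabla S_k(x)=\int_{|x-y|<1}\nabla W_k(x-y)\bigl[\rho(y)-\rho(x)\bigr]\,dy+\int_{|x-y|\geq 1}\nabla W_k(x-y)\,\rho(y)\,dy.
\end{equation*}
On the near part, the H\"older bound yields $|\nabla W_k(x-y)[\rho(y)-\rho(x)]|\leq [\rho]_{C^{0,\alpha}}|x-y|^{k-1+\alpha}$, which is integrable over $\{|x-y|<1\}$ precisely because $\alpha>1-k-N$; the far part is bounded exactly as above. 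Combining the two contributions produces $\nabla S_k\in L^\infty(\RR^N)$ and closes the proof.
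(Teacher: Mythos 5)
Your proposal is correct and follows essentially the same route as the paper's proof: the same near/far decomposition at radius one, the bound $|x-y|^k\leq \eta\left(|x|^k+|y|^k\right)$ combined with the $k$th-moment assumption for the locally bounded cases, and in the singular range $-N<k\leq 1-N$ the same exploitation of the oddness of $\nabla W_k$ together with the H\"older estimate $|x-y|^{k-1+\alpha}$, integrable near the diagonal precisely because $\alpha>1-k-N$. The only cosmetic differences are that for $1<k<N$ you split $|x-y|^{k-1}\lesssim |x|^{k-1}+|y|^{k-1}$ directly (requiring the $(k-1)$th moment, which indeed follows from the mass and the $k$th moment) where the paper instead bounds $|x-y|^{k-1}\leq |x-y|^{k}$ on the far field, and that you perform the antisymmetry cancellation on the near annulus rather than on the far field -- which is, if anything, slightly cleaner at the borderline $k=1-N$ where the far-field integral is only conditionally convergent.
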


\begin{proof}
A direct decomposition in near- and far-field sets $\mathcal{A}:=\left\{y:|x-y|\leq 1\right\}$ and $\mathcal{B}:=\RR^N-\mathcal{A}$ yields for $-N<k<0$ and $x \in \RR^N$,
\begin{align*}
 | S_k(x)| 
 &\leq \int_{\RR^N} |W_k(x-y)|  \rho(y)\, dy
 \leq \frac{1}{|k|} \int_{\mathcal{A}} |x-y|^{k}  \rho(y) \, dy + \frac{1}{|k|}\int_{\mathcal{B}}  \rho(y)\, dy\notag\\
 &\leq \frac{1}{|k|}\left(\frac{\sigma_N}{k+N} || \rho||_\infty + 1\right)<\infty\, ,
\end{align*}
where $\sigma_N=2 \pi^{(N/2)}/\Gamma(N/2)$ denotes the surface area of the $N$-dimensional unit ball. 
Locally, boundedness extends to the fast diffusion regime $0<k<N$ by using the inequality
\begin{equation}\label{eq:kconvex}
 |x-y|^k\leq \eta \left(|x|^k+|y|^k\right)\, ,
 \qquad \eta=\max\{1,2^{k-1}\}\, .
\end{equation}
This inequality follows directly from splitting into cases $k<1$ and $k>1$. The inequality 
$|x-y|^k \leq |x|^k+|y|^k$
is true for any $k \in (0,1]$ with $x,y \in \RR^N$ by direct inspection. For $N>1$ and $k \in (1,N)$, we have by convexity
$
|x-y|^k \leq 2^{k-1} \left(|x|^k+|y|^k\right),
$
for any $x,y \in \RR^N$, and so \eqref{eq:kconvex} holds true.

Similarly, in order to prove ii) for $1-N<k\leq 1$ and $x \in \RR^N$, we estimate $\nabla  S_k$ as
\begin{align*}
 |\nabla  S_k(x)| 
 &\leq \int_{\RR^N} |\nabla W_k(x-y)|  \rho(y)\, dy
 \leq \int_{\mathcal{A}} |x-y|^{k-1}  \rho(y) \, dy + \int_{\mathcal{B}}  \rho(y)\, dy \notag\\
 &\leq \left(\frac{\sigma_N}{k+N-1} || \rho||_\infty + 1\right)<\infty\, .
\end{align*}
In the Cauchy integral range $-N<k\leq 1-N$, we additionally require a certain H\"{o}lder regularity, yielding 
\begin{align*}
 |\nabla S_k(x)| 
 &= \left|\int_{\mathcal{A}} \nabla W_k(x-y) \left(\rho(y)-\rho(x)\right)\, dy + \int_{\mathcal{B}} \nabla W_k(x-y) \left(\rho(y)-\rho(x)\right)\, dy\right|\\
 &\leq \int_{\mathcal{A}} |\nabla W_k(x-y)| \left| \rho(y)-\rho(x)\right|\, dy
 + \int_{\mathcal{B}} |\nabla W_k(x-y)| \rho(y)\, dy\\
 &\leq \int_{\mathcal{A}} |x-y|^{k-1}|x-y|^{\alpha} \, dy +\int_{\mathcal{B}}  \rho(y)\, dy <\infty\, , \notag
\end{align*}
where the term $\int_{\mathcal{B}}\nabla W_k(x-y) \, dy$ vanishes by anti-symmetry. For $1<k<N$ and $x$ in some compact set, we have
 \begin{align}\label{eq:gradSboundloc}
 |\nabla  S_k (x)| 
 &\leq \int_{\mathcal{A}} |x-y|^{k-1} \rho(y) \, dy + \int_{\mathcal{B}} |x-y|^{k-1}  \rho(y)\, dy \notag\\
 &\leq \frac{\sigma_N}{k+N-1} || \rho||_\infty +\int_{\mathcal{B}} |x-y|^{k}  \rho(y)\, dy \notag
\end{align}
which concludes $\nabla S_k \in L^\infty_{loc}\left(\RR^N\right)$ using \eqref{eq:kconvex} and the fact that the $k$th moment of $ \rho$ is bounded.
\end{proof}

We will prove that for certain cases there are no stationary states to \eqref{eq:KS} in the sense of Definition \ref{def:sstates}, for instance for the sub-critical classical Keller-Segel model in two dimensions \cite{BlaDoPe06}. However, the scale invariance of \eqref{eq:KS} motivates us to look for self-similar solutions instead. To this end, we rescale equation \eqref{eq:KS} to a non-linear Fokker-Planck type equation as in \cite{CaTo00}. Let us define 
$$
u(t,x):= \alpha^N(t)\rho\left( \beta(t), \alpha(t) x\right),
$$
where $\rho(t,x)$ solves \eqref{eq:KS} and the functions $\alpha(t)$, $\beta(t)$ are to be determined. If we assume $u(0,x)=\rho(0,x)$, then $u(t,x)$ satisfies the rescaled drift-diffusion equation
\begin{equation}
\left\{
\begin{array}{l}
\partial_t u =  \frac{1}{N} \Delta u ^m +
2\chi \nabla\cdot \left( u \nabla S_k \right)+ \nabla\cdot \left(x u\right)\,
, \quad t>0\, , \quad x\in \RR^N\, , \smallskip\\
\displaystyle u(t=0,x) = \rho_0(x)\geq 0\, ,
\quad \int_{-\infty}^\infty \rho_0(x)\, dx = 1\, ,
\quad \int_{-\infty}^\infty x\rho_0(x)\, dx = 0\, ,
\end{array}
\right. \label{eq:KSresc}
\end{equation}
for the choices
\begin{equation*}
\alpha(t)= e^t, \quad \beta(t)=
\begin{cases}
 \frac{1}{2-k} \left(e^{(2-k)t} -1\right), &\text{if} \, k\neq2,\\
 t, &\text{if} \, k=2,
\end{cases}
\end{equation*}
and with $\nabla S_k$ given by \eqref{gradS} with $u$ instead of $\rho$. By differentiating the centre of mass of $u$, we see easily that 
\begin{equation*}\label{COM}
 \int_{\RR^N} x u(t,x)\, dx = e^{-t} \int_{\RR^N} x \rho_0(x)\, dx=0\, , \qquad \forall t>0\, ,
\end{equation*}
and so the initial zero centre of mass is preserved for all times. Self-similar solutions to \eqref{eq:KS} now correspond to stationary solutions of \eqref{eq:KSresc}. Similar to Definition \ref{def:sstates}, we state what we exactly mean by stationary states to the aggregation equation \eqref{eq:KSresc}.


\begin{definition}\label{def:sstates resc}
Given $\bar u \in L_+^1\left(\RR^N\right) \cap L^\infty\left(\RR^N\right)$ with $||\bar u||_1=1$, it is a \textbf{stationary state} for the evolution equation \eqref{eq:KSresc} if $\bar u^{m} \in \mathcal{W}_{loc}^{1,2}\left(\RR^N\right)$, $\nabla \bar S_k\in L^1_{loc}\left(\RR^N\right)$, and it satisfies
 \begin{equation}\label{eq:steady resc}
  \frac{1}{N}\nabla \bar u^m=-2 \chi \, \bar u \nabla \bar S_k - x\, \bar u
 \end{equation}
in the sense of distributions in $\RR^N$. 
If $-N<k\leq 1-N$, we further require $\bar u \in C^{0,\alpha}\left(\RR^N\right)$ with $\alpha \in (1-k-N,1)$.
\end{definition}
From now on, we switch notation from $u$ to $\rho$ for simplicity, it should be clear from the context if we are in original or rescaled variables. In fact, stationary states as defined above have even more regularity:
\begin{lemma}\label{lem:sstatesreg}
 Let $k \in (-N,N)\backslash\{0\}$ and $\chi>0$. 
 \begin{enumerate}[(i)]
  \item If $\bar \rho$ is a stationary state of equation \eqref{eq:KS} with $|x|^k\bar\rho\in L^1\left(\RR^N\right)$ in the case $0<k<N$, then $\bar \rho$ is continuous on $\RR^N$.
  \item If $\bar \rho_\resc$ is a stationary state of equation \eqref{eq:KSresc} with $|x|^k\bar\rho_\resc\in L^1\left(\RR^N\right)$ in the case $0<k<N$, then $\bar \rho_\resc$ is continuous on $\RR^N$. 
 \end{enumerate}
\end{lemma}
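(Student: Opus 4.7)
The plan is to bootstrap the regularity of $\bar\rho^m$ from $\mathcal{W}^{1,2}_{loc}$ up to $\mathcal{W}^{1,\infty}_{loc}$ using the stationary equation itself, and then transfer continuity back to $\bar\rho$ by inverting the power.

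First I would invoke Lemma \ref{lem:regS} to conclude that $\nabla \bar S_k \in L^\infty_{loc}(\RR^N)$ on the whole range $k \in (-N,N)\setminus\{0\}$. The hypotheses of that lemma are satisfied in every sub-range: in the singular Cauchy range $-N < k \leq 1-N$ the required H\"older regularity is already built into Definition \ref{def:sstates} (respectively Definition \ref{def:sstates resc}); in the fast-diffusion range $0 < k < N$ the additional $k$-th moment bound is exactly the standing hypothesis of parts (i) and (ii); and in the remaining ranges no further assumption is needed.

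Next I would read the stationary equation \eqref{eq:steady} as an equality of functions rather than distributions: since $\bar\rho^m \in \mathcal{W}^{1,2}_{loc}(\RR^N)$ the gradient $\nabla \bar\rho^m$ is already a locally $L^2$ function, and
\begin{equation*}
\nabla \bar\rho^m = -2N\chi\,\bar\rho\,\nabla \bar S_k
\end{equation*}
holds almost everywhere. Combining $\bar\rho \in L^\infty(\RR^N)$ with $\nabla \bar S_k \in L^\infty_{loc}(\RR^N)$ shows that the right-hand side lies in $L^\infty_{loc}(\RR^N)$, so $\nabla \bar\rho^m \in L^\infty_{loc}(\RR^N)$. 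Together with $\bar\rho^m \in L^\infty(\RR^N)$ this yields $\bar\rho^m \in \mathcal{W}^{1,\infty}_{loc}(\RR^N)$, which coincides (up to modification on a null set) with the class of locally Lipschitz functions, and is in particular continuous on $\RR^N$.

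Finally, since $\bar\rho \geq 0$ and the map $t \mapsto t^{1/m}$ is continuous on $[0,\infty)$ for every $m>0$, the identity $\bar\rho = (\bar\rho^m)^{1/m}$ transfers continuity from $\bar\rho^m$ to $\bar\rho$, settling part (i) (the case $m=1$ is trivial from this viewpoint since $\bar\rho^m = \bar\rho$). For part (ii) the argument is identical after rewriting \eqref{eq:steady resc} as $\nabla \bar\rho_\resc^m = -2N\chi\,\bar\rho_\resc\,\nabla \bar S_k - N x\,\bar\rho_\resc$: the extra confinement contribution $-N x\,\bar\rho_\resc$ automatically belongs to $L^\infty_{loc}(\RR^N)$ because $\bar\rho_\resc \in L^\infty(\RR^N)$, so it does not interfere with the bootstrap. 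I do not anticipate a genuine obstacle here; the only point requiring care is the case-by-case bookkeeping needed to ensure Lemma \ref{lem:regS} applies across every sub-range of $k$, together with the verification that the identity $\nabla \bar\rho^m = -2N\chi\,\bar\rho\,\nabla \bar S_k$ is a pointwise a.e. statement rather than merely distributional.
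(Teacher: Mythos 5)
Your proof is correct, but it takes a genuinely different route from the paper. The paper treats $\bar\rho^m$ as a weak solution of the second-order equation $\frac{1}{N}\Delta\bar\rho^m = -2\chi\,\nabla\cdot(\bar\rho\nabla\bar S_k)$: starting from $\bar\rho\nabla\bar S_k \in L^1_{loc}\cap L^\infty_{loc}$ (via Lemma \ref{lem:regS}, with the same case-by-case hypothesis bookkeeping you perform), it interpolates to place the right-hand side in $\mathcal{W}^{-1,p}_{loc}\left(\RR^N\right)$ for all $1<p<\infty$, upgrades $\bar\rho^m$ to $\mathcal{W}^{1,p}_{loc}\left(\RR^N\right)$ by classical elliptic regularity, and concludes H\"older continuity via Morrey's inequality. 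You instead exploit the equation at first order: since $\bar\rho^m\in\mathcal{W}^{1,2}_{loc}\left(\RR^N\right)$, both sides of \eqref{eq:steady} are locally integrable functions, so the distributional identity upgrades to an a.e.\ identity --- the one step genuinely requiring justification, and your justification is sound --- whence $\nabla\bar\rho^m = -2N\chi\,\bar\rho\nabla\bar S_k \in L^\infty_{loc}\left(\RR^N\right)$ directly, giving $\bar\rho^m\in\mathcal{W}^{1,\infty}_{loc}\left(\RR^N\right)$. Your argument is more elementary (no Calder\'on--Zygmund theory, no Sobolev embedding) and in fact yields a strictly stronger conclusion, local Lipschitz rather than local H\"older regularity of $\bar\rho^m$; it essentially anticipates the gradient estimate the paper only derives later, in the proof of Corollary \ref{cor:EL}, where $\left|\nabla\bar\rho^{m-1}\right|$ is bounded by $\left|\nabla\bar S_k\right|$ on the support (note the paper does that computation for $\bar\rho^{m-1}$ on the positivity set, where division by $\bar\rho$ is legitimate, while your version for $\bar\rho^m$ needs no such restriction since one multiplies by $\bar\rho$ instead). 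The treatment of the rescaled case via the extra term $-Nx\,\bar\rho_\resc\in L^\infty_{loc}\left(\RR^N\right)$ matches the paper's handling of $\nabla\cdot(x\bar\rho_\resc)$ and is equally unproblematic.
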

\begin{proof}
\begin{enumerate}[(i)]
 \item First, note that  $\nabla \bar S_k \in L_{loc}^\infty\left(\RR^N\right)$ by Lemma \ref{lem:regS}, and therefore, $\bar \rho \nabla \bar S_k \in L^1_{loc}\left(\RR^N\right) \cap L^\infty_{loc}\left(\RR^N\right)$. Hence, we get by interpolation that $\nabla \cdot \left(\bar \rho \nabla \bar S_k\right) \in \mathcal{W}_{loc}^{-1,p}\left(\RR^N\right)$ for all $1<p < \infty$. Recall from Definition \ref{def:sstates} that 
 $\bar \rho^m$ is a weak $\mathcal{W}_{loc}^{1,2}\left(\RR^N\right)$ solution of
 \begin{equation*}\label{eq:steady2}
  \frac{1}{N}\Delta \bar \rho^m = -2 \chi \, \nabla \cdot \left(\bar \rho \nabla \bar S_k\right)
 \end{equation*}
in $\RR^N$, and so $\bar \rho^m$ is in fact a weak solution in $\mathcal{W}_{loc}^{1,p}\left(\RR^N\right)$ for all $1< p< \infty$ by classic elliptic regularity. Using Morrey's inequality, we deduce that $\bar \rho^m$ belongs to the H\"{o}lder space $C_{loc}^{0,\alpha}\left(\RR^N\right)$ with $\alpha=(p-N)/N$ for any $N< p< \infty$, and thus $\bar \rho^m \in C\left(\RR^N\right)$. Hence, $\bar \rho$ itself is continuous as claimed.
\item Since $x\bar \rho_\resc \in L^1_{loc}\left(\RR^N\right) \cap L^\infty_{loc}\left(\RR^N\right)$, we obtain again by interpolation $\nabla \cdot \left(x\bar \rho_\resc\right) \in \mathcal{W}_{loc}^{-1,p}\left(\RR^N\right)$ for all $1< p< \infty$. By Definition \ref{def:sstates resc}, $\bar \rho_\resc^m$ is a weak $\mathcal{W}_{loc}^{1,2}\left(\RR^N\right)$ solution of
 \begin{equation*}
  \frac{1}{N}\Delta \bar \rho_\resc^m = -2 \chi \, \nabla \cdot \left(\bar \rho_\resc \nabla \bar S_k\right)-\nabla \cdot \left(x\bar \rho_\resc\right) 
 \end{equation*}
 in $\RR^N$, and so $\bar \rho_\resc^m$ is again a weak solution in $\mathcal{W}_{loc}^{1,p}\left(\RR^N\right)$ for all $1< p< \infty$ by classic elliptic regularity. We conclude as in original variables.
\end{enumerate}
\end{proof}

In the case $k<0$, we furthermore have a non-linear algebraic equation for stationary states:

\begin{corollary}[Necessary Condition for Stationary States] \label{cor:EL}
Let $k \in (-N,0)$ and $\chi>0$.
\begin{enumerate}[(i)]
 \item If $\bar \rho$ is a stationary state of equation \eqref{eq:KS}, then $\bar \rho \in \mathcal{W}^{1,\infty}\left(\RR^N\right)$ and it satisfies 
  \begin{equation}\label{eq:EL}
 \bar \rho(x)^{m-1} = \frac{N(m-1)}{m} \left( C_k[\bar\rho](x)-2 \chi \, \bar S_k(x)\right)_+\, , \qquad
 \forall \, x \in \RR^N\, ,
\end{equation}
 where $C_k[\bar\rho](x)$ is constant on each connected component of $\supp(\bar \rho)$. 
\item If $\bar \rho_{\resc}$ is a stationary state of equation \eqref{eq:KSresc}, then $\bar \rho_\resc \in \mathcal{W}^{1,\infty}_{loc}\left(\RR^N\right)$ and it satisfies 
 \begin{equation}\label{eq:ELresc}
 \bar \rho_\resc(x)^{m-1} = \frac{N(m-1)}{m} \left( C_{k,\resc}[\bar\rho](x)-2 \chi \, \bar S_k(x)-\frac{|x|^2}{2}\right)_+\, , \qquad
 \forall \, x \in \RR^N\, ,
\end{equation}
where $C_{k,\resc}[\bar\rho](x)$ is constant on each connected component of $\supp(\bar \rho_\resc)$. 
\end{enumerate}
\end{corollary}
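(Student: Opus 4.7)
The plan is to recast the distributional equation \eqref{eq:steady} as a gradient identity that is constant on each connected component of the support of $\bar\rho$, yielding the algebraic form \eqref{eq:EL}; the $\mathcal{W}^{1,\infty}$-regularity then drops out by combining this formula with the $L^\infty$ bound on $\nabla\bar S_k$ provided by Lemma \ref{lem:regS}.

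For $k\in(-N,0)$, Lemma \ref{lem:regS} gives $\bar S_k,\nabla\bar S_k\in L^\infty(\RR^N)$ and Lemma \ref{lem:sstatesreg} yields that $\bar\rho$ is continuous, so the set $\{\bar\rho>0\}$ is open. Since $\bar\rho^m\in\mathcal{W}^{1,2}_{loc}(\RR^N)$ and the fair-competition relation $N(m-1)+k=0$ forces $m=1-k/N\in(1,2)$, the chain rule gives distributionally $\nabla\bar\rho^m=\frac{m}{m-1}\bar\rho\,\nabla\bar\rho^{m-1}$ on $\{\bar\rho>0\}$, so that \eqref{eq:steady} may be rewritten as
\begin{equation*}
\bar\rho\,\nabla\mathcal T_{m,k}[\bar\rho]\;=\;\bar\rho\,\nabla\!\left(\frac{m}{N(m-1)}\bar\rho^{m-1}+2\chi\,\bar S_k\right)\;=\;0\,.
\end{equation*}
On each connected component $\Omega$ of $\{\bar\rho>0\}$, division by $\bar\rho$ yields $\nabla\mathcal T_{m,k}[\bar\rho]\equiv 0$, so $\mathcal T_{m,k}[\bar\rho]=C_\Omega$ for some constant $C_\Omega\in\RR$. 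Defining $C_k[\bar\rho](x):=C_\Omega$ on each such $\Omega$ and extending by any value with $C_k[\bar\rho](x)\le 2\chi\,\bar S_k(x)$ off the support gives \eqref{eq:EL}, the positive part absorbing the identity $\bar\rho=0$ outside the support.

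For the regularity, $\bar S_k$ is globally Lipschitz because $\nabla\bar S_k\in L^\infty(\RR^N)$, hence $\bar\rho^{m-1}$ is globally Lipschitz from \eqref{eq:EL}, and since $1/(m-1)>1$ the map $t\mapsto t^{1/(m-1)}$ is $C^1$ and globally Lipschitz on the bounded interval $[0,\|\bar\rho\|_\infty^{m-1}]$; composing yields $\bar\rho\in\mathcal{W}^{1,\infty}(\RR^N)$. Part (ii) proceeds identically: the drift $x\bar\rho_\resc=\bar\rho_\resc\,\nabla(|x|^2/2)$ in \eqref{eq:steady resc} folds into
\begin{equation*}
\bar\rho_\resc\,\nabla\!\left(\frac{m}{N(m-1)}\bar\rho_\resc^{m-1}+2\chi\,\bar S_k+\frac{|x|^2}{2}\right)=0\,,
\end{equation*}
and the same constancy argument on the components of $\{\bar\rho_\resc>0\}$ produces \eqref{eq:ELresc}; the regularity drops to $\mathcal{W}^{1,\infty}_{loc}(\RR^N)$ only because $|x|^2/2$ is merely locally Lipschitz.

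The main delicate point is the rigorous factorisation $\nabla\bar\rho^m=\frac{m}{m-1}\bar\rho\,\nabla\bar\rho^{m-1}$ as a distributional identity and the subsequent division by $\bar\rho$ on each connected component of its support. Both rely essentially on the continuity of $\bar\rho$ from Lemma \ref{lem:sstatesreg}: one localises to neighbourhoods where $\bar\rho\ge\eps>0$ (where the chain rule is unambiguous), verifies that $\mathcal T_{m,k}[\bar\rho]$ is locally constant there, and then propagates the value $C_\Omega$ by connectedness; the identity on the boundary of the support follows by continuity of $\bar\rho^{m-1}$ and of $\bar S_k$.
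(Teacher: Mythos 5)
Your proposal is correct and follows essentially the same route as the paper: continuity of $\bar\rho$ from Lemma \ref{lem:sstatesreg} makes the positivity set open, the chain rule $\nabla\bar\rho^{m-1}=\frac{m-1}{m\bar\rho}\nabla\bar\rho^m$ is justified by localising where $\bar\rho\geq\eps>0$, division by $\bar\rho$ gives $\nabla\mathcal T_{m,k}[\bar\rho]=0$ and hence constancy on each connected component, and the $\mathcal{W}^{1,\infty}$ (resp.\ $\mathcal{W}^{1,\infty}_{loc}$) regularity follows from the $L^\infty$ bound on $\nabla\bar S_k$ in Lemma \ref{lem:regS} together with $m\in(1,2)$. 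Your explicit composition argument via the map $t\mapsto t^{1/(m-1)}$ on $[0,\|\bar\rho\|_\infty^{m-1}]$ merely spells out what the paper compresses into ``Since $m\in(1,2)$, we conclude $\bar\rho\in\mathcal{W}^{1,\infty}(\RR^N)$.''
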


\begin{proof}
\begin{enumerate}[(i)]
 \item For a stationary state $\bar \rho$ of equation \eqref{eq:KS}, let us us define the set 
 $$\Omega = \left\{x \in \RR^N : \bar \rho(x)>0\right\}\, .$$ Since $\bar \rho$ is continuous by Lemma \ref{lem:sstatesreg}, $\Omega$ is an open set with countably many open, possibly unbounded connected components. Let us take any bounded smooth connected open subset $\mathcal{U}$ such that $\overline{\mathcal{U}} \subset \Omega$. By continuity, $\bar \rho$ is bounded away from zero in $\mathcal{U}$, and thus $\nabla \bar \rho^{m-1}= \frac{m-1}{m\bar \rho} \nabla \bar \rho^m$ holds in the distributional sense in $\mathcal{U}$. From \eqref{eq:steady} in the definition of stationary states, we conclude that 
\begin{equation}\label{eq:steady3}
\nabla \left( \frac{m}{N(m-1)} \bar \rho^{m-1} + 2 \chi \, \bar S_k\right) = 0\, ,
\end{equation}
in the sense of distributions in $\Omega$. Hence, the function $C_k[\bar\rho](x): = \frac{m}{N(m-1)} \bar \rho^{m-1}(x) + 2 \chi \, \bar S_k(x)$ is constant in each connected component of $\Omega$, and so \eqref{eq:EL} follows. Additionally, it follows from \eqref{eq:steady3} that for any $x \in \RR^N$\
\begin{align*}
 \left|\nabla \bar \rho^{m-1}(x)\right| 
 =  \frac{2\chi N(m-1)}{m} \left|\nabla \bar S_k (x)\right| \leq c
\end{align*}
for some constant $c>0$ since $\bar S_k \in \mathcal{W}^{1,\infty}\left(\RR^N\right)$ by Lemma \ref{lem:regS}. Since $m\in (1,2)$, we conclude $\bar \rho \in \mathcal{W}^{1,\infty}\left(\RR^N\right)$.
\item We follow the same argument for a stationary state $\bar \rho_\resc$ of the rescaled equation \eqref{eq:KSresc} and using \eqref{eq:steady resc} in Definition \ref{def:sstates resc}, we obtain 
\begin{equation*}
\nabla \left( \frac{m}{N(m-1)} \bar \rho_\resc^{m-1} + 2 \chi \, \bar S_k + \frac{|x|^2}{2}\right) = 0\, ,
\end{equation*}
in the sense of distributions in $\Omega$. Here, the function
$C_{k,\resc}[\bar\rho_\resc](x): = \frac{m}{N(m-1)} \bar \rho_\resc^{m-1}(x) + 2 \chi \, \bar S_k(x)+ \frac{|x|^2}{2}$ is again constant in each connected component of $\supp(\bar \rho_\resc)$. Similarly, it follows from Lemma \ref{lem:regS} that for any $\omega>0$ and $x \in B(0,\omega)$,
\begin{align*}
 \left|\nabla \bar \rho_\resc^{m-1}(x)\right| 
 =  \frac{N(m-1)}{m} \left(2\chi\left|\nabla \bar S_k (x)\right| + |x|\right) \leq c
\end{align*}
for some constant $c>0$, and so $\bar \rho_\resc \in \mathcal{W}^{1,\infty}_{loc}\left(\RR^N\right)$.
\end{enumerate}
\end{proof}

\subsection{Fair-competition: main results}
\label{sec:definitions}

It is worth noting that the functional $\mathcal F_{m,k}[\rho]$
possesses remarkable homogeneity properties. Indeed, the
mass-preserving dilation $\rho_\lambda(x) =
\lambda^{N}\rho(\lambda x)$ transforms the functionals as
follows:
\begin{align*}
\mU_m\left[\rho_\lambda\right] = 
 \begin{cases}
  \lambda^{N(m-1)}\mU_m[\rho]\, ,  &\text{if} \quad m\neq 1\, ,\\
  \mU_m[\rho] +\log\lambda\, , &\text{if}\quad  m=1\, ,
 \end{cases}
\end{align*}
%
and,
\begin{align*}
\mW_k \left[\rho_\lambda\right] =
 \begin{cases}
  \lambda^{-k} \mW_k[\rho] \, ,  &\text{if} \quad k\neq 0\, ,\\
   \mW_k[\rho] - \log \lambda\, , &\text{if}\quad  k=0\, .
 \end{cases}
\end{align*}
This motivates the following classification:
\begin{definition}[Three different regimes]
\
\begin{description}
\item[$\mathbf{N(m-1)+k = 0}$] This is the {\bf fair-competition}
regime, where homogeneities of the two competing contributions
exactly balance. If $k<0$, or equivalently $m>1$, then we will have a dichotomy according to $\chi>0$ (see Definition \ref{def:k} below). Some variants of the HLS inequalities are very related to this dichotomy. This was already proven in \cite{DoPe04,BlaDoPe06} for the Keller-Segel case in $N=2$, and in \cite{BCL} for the Keller-Segel case in $N\geq 3$. If $k>0$, that is $m<1$, no critical $\chi$ exists as we will prove in Section \ref{sec:Fast Diffusion Case k pos}. 

\item[$\mathbf{N(m-1)+k > 0}$] This is the {\bf
diffusion-dominating} regime. Diffusion is strong, and is expected to overcome aggregation, whatever $\chi>0$ is. This domination effect means that solutions exist globally in time and are bounded uniformly in time \cite{CaCa06,Sugi1,Sugi2}. Stationary states were found by minimisation of the free energy functional in two and three dimensions \cite{Strohmer2008,CCV} in the case of attractive Newtonian potentials. Stationary states are radially symmetric if $2-N\leq k<0$ as proven in \cite{CHVY}. Moreover, in the particular case of $N=2$, $k=0$, and $m>1$ it has been proved in \cite{CHVY} that the asymptotic behavior is given by compactly supported stationary solutions independently of $\chi$.

\item[$\mathbf{N(m-1)+k < 0}$] This is the {\bf
attraction-dominating} regime. This regime is less understood.
Self-attraction is strong, and can overcome the regularizing
effect of diffusion whatever $\chi>0$ is, but there also exist
global in time regular solutions under some smallness assumptions, see \cite{CLW,BL,CW,LW}.
\end{description}
\label{def:classification regimes}
\end{definition}
We will here only concentrate on the fair-competition regime, and denote the corresponding energy functional by $\mathcal F_k[\rho] = \mathcal F_{1-k/N,k}[\rho]$. From now on, assume $N(m-1)+k = 0$. Notice that the functional $\mF_k$ is homogeneous in this regime, i.e.,
\begin{equation}\label{dil}
\mF_k[\rho_\lambda] = \lambda^{-k} \mF_k[\rho]\, .
\end{equation}
The analysis in the fair competition regime depends on the sign of $k$:

\begin{definition}[Three different cases in the fair-competition regime]
\
\begin{description}
\item[$\mathbf{k<0}$] This is the {\bf porous medium case} with $m\in (1,2)$, where diffusion is small in regions of small densities. The classical porous medium equation, i.e.    $\chi=0$, is very well studied, see \cite{VazquezPME} and the references therein. We have a dichotomy for existence of stationary states and global minimisers of the energy functional $\mF_k$ depending on a critical parameter $\chi_c$ which will be defined in \eqref{defchic}, and hence separate the sub-critical, the critical and the super-critical case, according to $\chi \lessgtr \chi_c$. The case $k<0$ is discussed in Section \ref{sec: Porous Medium Case k neg}.

\item[$\mathbf{k=0}$] This is the {\bf
logarithmic case}. There exists an explicit extremal density $\bar \rho_0 $
which realises the minimum of the functional $\mathcal F_0$ when
$\chi = 1$. Moreover, the functional $\mathcal F_0$ is bounded
below but does not achieve its infimum for $0<\chi<1$ while it is
not bounded below for $\chi > 1$. 
Hence, $\chi_c = 1$ is the critical parameter in the
logarithmic case that was analysed in \cite{CaCa11} in one dimension and radial initial data in two dimensions.

\item[$\mathbf{k>0}$] This is the {\bf fast diffusion case} with $m\in (0,1)$, where diffusion is strong in regions of small densities. For any $\chi>0$, no radially symmetric non-increasing stationary states with bounded $k$th moment exist, and $\mF_k$ has no radially symmetric non-increasing minimisers. However, we have existence of self-similar profiles independently of $\chi>0$ as long as diffusion is not too fast, i.e. $k \leq 1$. Self-similar profiles can only exist if diffusion is not too strong with restriction $0<k<2$, that is $(N-2)/N<m<1$. The fast diffusion case is discussed in Section \ref{sec:Fast Diffusion Case k pos}.
\end{description}
\label{def:k}
\end{definition}

When dealing with the energy functional $\mF_k$, we work in the set of non-negative normalised densities,
$$
 \mY := \left\{ \rho \in L_+^1\left(\RR^N\right) \cap L^m \left(\RR^N\right) : ||\rho||_1=1\, , \, \int x\rho(x)\, dx=0\right\}.
$$
In rescaled variables, equation \eqref{eq:KSresc} is the formal gradient flow of the rescaled free energy functional $\mFr$, which is complemented with an additional quadratic confinement potential,
\begin{equation*}
\mFr[\rho]= \mathcal F_k [\rho] + \frac{1}{2}\mV[\rho]\, ,
\qquad
\mV[\rho] = \int_{\RR^N} |x|^2 \rho(x)\, dx \, .
\end{equation*}
Defining the sets
\begin{align*}
 \mY_2:=\left\{ \rho \in \mY:\mV[\rho]<\infty \right\}\, ,
 \qquad
 \mY_k:=\left\{ \rho \in \mY:\int_{\RR^N} |x|^k \rho(x)\, dx<\infty \right\}\, ,
 \end{align*}
 we see that $\mFr$ is well-defined and finite on $\mY_2$ for $k<0$ and on $ \mY_{2,k}:=\mY_2 \cap \mY_k$ for $k>0$. Thanks to the formal gradient flow structure in the Euclidean Wasserstein metric {\bf{W}}, we can write the rescaled equation \eqref{eq:KSresc} as
\begin{equation*}
\partial_t\rho
=\nabla \cdot \left( \rho\, \nabla \mTr [\rho]\right)
=- \nabla_{\bf W} \mFr[\rho]\, ,
\end{equation*}
where $\mTr$ denotes the first variation of the rescaled energy functional,
\begin{equation}\label{eq:1stvar resc}
 \mTr[\rho](x) :=\mT_k[\rho](x) +\frac{|x|^2}{2}
\end{equation}
with $\mT_k$ as defined in \eqref{eq:1stvar}.
In this paper, we prove the following results:
\begin{theorem}[The Critical Porous Medium Regime]\label{mainthm1}
In the porous medium regime $k\in(-N,0)$ and for critical interaction strengths $\chi=\chi_c$, there exist global minimisers of $\mF_k$ and they are radially symmetric non-increasing, compactly supported and uniformly bounded. Furthermore, all stationary states with bounded second moment are global minimisers of the energy functional $\mF_k$, and conversely, all global minimisers of $\mF_k$ are stationary states of \eqref{eq:KS}.
\end{theorem}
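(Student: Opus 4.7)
The plan is to decompose the claim into three parts: existence and qualitative properties of minimisers at $\chi=\chi_c$; that any stationary state with bounded second moment achieves the infimum; and the converse. The structural tool throughout is the homogeneity \eqref{dil}: since $-k>0$, the identity $\mF_k[\rho_\lambda]=\lambda^{-k}\mF_k[\rho]$ forces $\inf_{\mY}\mF_k\in\{0,-\infty\}$, and $\chi_c$ is precisely the threshold (equivalent to saturation in a variant of the HLS inequality) at which $\inf_{\mY}\mF_k=0$.

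To produce an actual minimiser, I would first apply Riesz' rearrangement inequality: $\mU_m$ is invariant under symmetric decreasing rearrangement and $-\mW_k$ increases (the kernel $-W_k=|x|^k/(-k)$ is strictly decreasing for $k<0$), so minimising sequences may be replaced by radially symmetric non-increasing ones. Concentration-compactness \`a la Lions then applies: vanishing is incompatible with $\inf=0$ being attainable through mass-preserving dilations, and dichotomy is excluded by the strict subadditivity of the mass-dependent infimum, itself a consequence of the homogeneity and sign of $k$. Tightness for radially decreasing sequences in $L^1\cap L^m$ yields a limit $\bar\rho$. Uniform boundedness follows from the Euler--Lagrange equation \eqref{eq:EL} combined with $\bar S_k\in L^\infty(\RR^N)$ (Lemma \ref{lem:regS} for $k<0$); compact support follows by observing that $\bar S_k(x)\to 0$ as $|x|\to\infty$, so on any unbounded component of $\supp\bar\rho$ the constant $C_k[\bar\rho]$ in \eqref{eq:EL} would have to be non-negative, contradicting $\bar\rho\in L^1$ and radial monotonicity.

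The implication that stationary states with finite second moment are minimisers rests on a Pohozaev-type identity obtained from the scaling. Differentiating $\mF_k[\bar\rho_\lambda]=\lambda^{-k}\mF_k[\bar\rho]$ at $\lambda=1$ and using $\partial_\lambda\bar\rho_\lambda|_{\lambda=1}=\nabla\cdot(x\bar\rho)$ gives
\begin{equation*}
-k\,\mF_k[\bar\rho]=\int_{\RR^N}\mT_k[\bar\rho]\,\nabla\cdot(x\bar\rho)\,dx=-\int_{\RR^N}\bar\rho\,x\cdot\nabla\mT_k[\bar\rho]\,dx,
\end{equation*}
and the stationary state relation \eqref{eq:steady}, namely $\bar\rho\,\nabla\mT_k[\bar\rho]=0$, makes the right-hand side vanish; the finite second moment ensures the integration by parts is justified via a truncation/approximation argument. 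Hence $\mF_k[\bar\rho]=0=\inf_{\mY}\mF_k$. The converse is standard calculus of variations: at a minimiser $\bar\rho$, the first variation $\mT_k[\bar\rho]$ is constant on each connected component of $\supp\bar\rho$, and the regularity provided by Lemma \ref{lem:sstatesreg} and Corollary \ref{cor:EL} then lets us differentiate to recover \eqref{eq:steady}.

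The main obstacle I anticipate lies in the singular range $k\in(-N,1-N]$, where $\nabla S_k$ is defined only as a Cauchy principal value and stationary states are required to satisfy a $C^{0,\alpha}$ regularity condition (Definition \ref{def:sstates}). Establishing the required regularity of limit profiles in the concentration-compactness step, excluding dichotomy in a way that is uniform across the full singular range, and rigorously justifying the boundary terms in the scaling identity all demand the fine analysis of the radial kernel and the hypergeometric identities alluded to in the introduction.
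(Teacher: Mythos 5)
Your middle step (stationary states with finite second moment have zero energy, hence are minimisers) is essentially the paper's own argument: Lemma \ref{lem:Fzero} is exactly your Pohozaev/Euler-homogeneity identity, obtained by testing \eqref{eq:steady} against $x$ with a truncation justified by the second moment, and with the principal-value singularity for $-N<k\leq 1-N$ cancelled by symmetrising in $x$ and $y$. The serious gap is in your existence step. At $\chi=\chi_c$ the problem is dilation-invariant with $\inf_{\mY}\mF_k=0$, so concentration-compactness with strict subadditivity cannot close: the mass-dependent infima are all $0$ (or $-\infty$) by the very homogeneity \eqref{dil} you invoke, so subadditivity holds with \emph{equality}, and dichotomy cannot be excluded this way. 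Worse, vanishing genuinely occurs along minimising sequences: for any $\rho$ with $\mF_k[\rho]>0$, the dilations $\rho_\lambda$ with $\lambda\to0$ spread out while $\mF_k[\rho_\lambda]=\lambda^{-k}\mF_k[\rho]\to0$, since $-k>0$. The paper (following \cite{BCL}) avoids this by solving a scaling-normalised problem instead, namely attainment of the best constant $C_*$ in the HLS variant \eqref{eq:HLSm}: rearrangement reduces to radial non-increasing sequences, the $L^1$ and $L^m$ bounds give the pointwise majorant $p_j^*(R)\lesssim\min\{R^{-N},R^{-N/m}\}$, Helly's selection theorem gives a.e.\ convergence, and the majorant (which lies in $L^{2N/(2N+k)}$, so that \eqref{eq:HLS} applies) lets dominated convergence pass $\mW_k$ to the limit; minimisers of $\mF_k$ with zero energy are then obtained by normalisation. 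Without breaking the dilation invariance your scheme produces no minimiser.

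Two further steps are circular as written. First, your $L^\infty$ bound invokes Lemma \ref{lem:regS} to get $\bar S_k\in L^\infty(\RR^N)$, but that lemma \emph{assumes} $\rho\in L^1\cap L^\infty$, which is exactly what is to be proved; a minimiser is a priori only in $L^1\cap L^m$. This is precisely why the paper needs the iteration of Lemma \ref{lem:induction}: starting from $\rho(r)\lesssim r^{-N/m}$ it bootstraps via $p\mapsto g(p)=(p+N+k)/(m-1)$, and each step rests on the fine behaviour of the radial kernel $\psi_k$ (the hypergeometric expansions of the Appendix, including the compensations across the non-integrable singularity at $s=1$ when $-N<k\leq1-N$), which you flag as a difficulty but do not resolve. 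Second, in the converse direction you cite Lemma \ref{lem:sstatesreg} and Corollary \ref{cor:EL} -- statements \emph{about stationary states} -- to upgrade a minimiser to a stationary state; the regularity needed to verify Definition \ref{def:sstates} (in particular $\rho^m\in\mathcal{W}^{1,2}_{loc}$ and, in the singular range, $\rho\in C^{0,\alpha}$ with $\alpha\in(1-k-N,1)$) must instead be extracted from the Euler--Lagrange relation \eqref{eq:min} itself, as the paper does in Corollary \ref{cor:regmin} via the fractional Sobolev bootstrap. Your compact-support sketch is salvageable but incomplete: in the borderline case where the constant $C_k[\rho]$ vanishes on an unbounded component, integrability fails only marginally ($\rho\sim|x|^{-N}$), so one needs a quantitative lower bound on $-\bar S_k$, or the paper's cleaner route of showing $\rho^{m-1}$ and $W_k\ast\rho$ lie in $L^{m/(m-1)}(\RR^N)$, which no nonzero constant does.
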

\begin{theorem}[The Sub-Critical Porous Medium Regime]\label{mainthm2}
In the porous medium regime $k\in(-N,0)$ and for sub-critical interaction strengths $0<\chi<\chi_c$, no stationary states exist for equation \eqref{eq:KS} and no minimisers exist for $\mF_k$. In rescaled variables, all stationary states are continuous and compactly supported. There exist global minimisers of $\mFr$ and they are radially symmetric non-increasing and uniformly bounded stationary states of equation \eqref{eq:KSresc}.
\end{theorem}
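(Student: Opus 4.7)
Throughout, $k<0$ and $m=1-k/N>1$. The proof leverages the homogeneity \eqref{dil} together with the variant HLS inequality characterising $\chi_c$, to be established in Section \ref{sec: Porous Medium Case k neg}: on $\mY$ one has $\mU_m[\rho]\geq -\chi_c\mW_k[\rho]$ (recall $\mW_k[\rho]<0$), so that for $\chi<\chi_c$ the strict bound $\mF_k[\rho]>0$ holds on all of $\mY$. Non-existence of minimisers of $\mF_k$ is then immediate: since $-k>0$, the dilation identity $\mF_k[\rho_\lambda]=\lambda^{-k}\mF_k[\rho]$ drives the functional to $0$ as $\lambda\to 0^+$, so $\inf_{\mY}\mF_k=0$ is not attained. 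For non-existence of stationary states I would derive a virial identity by testing \eqref{eq:steady} against $x$: integration by parts turns the diffusive side into $-N(m-1)\mU_m[\bar\rho]$, the symmetrisation $\iint x\cdot\nabla W_k(x-y)\bar\rho\otimes\bar\rho\,dxdy=(k/2)\mW_k[\bar\rho]$ (from the oddness of $\nabla W_k$) turns the drift into $-k\chi\mW_k[\bar\rho]$, and the fair-competition relation $N(m-1)=-k$ then collapses the identity to $\mF_k[\bar\rho]=0$, contradicting strict positivity.

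For the rescaled functional, subtracting $\chi/\chi_c$ times the HLS bound gives the coercive estimate
\begin{equation*}
\mFr[\rho]\geq (1-\chi/\chi_c)\,\mU_m[\rho]+\tfrac12\mV[\rho]\geq 0\qquad\text{on }\mY_2,
\end{equation*}
with strictly positive coefficient since $\chi<\chi_c$. Picking a minimising sequence $(\rho_n)\subset\mY_2$, Riesz rearrangement applied to $-W_k\geq 0$ (which is itself radial and non-increasing when $k<0$) decreases both $\mW_k$ and $\mV$ while preserving mass and $\mU_m$, so I may assume each $\rho_n$ is radial non-increasing. The coercive bound then controls $\|\rho_n\|_{L^m}$ and the second moment uniformly, providing tightness and a weak $L^m$-limit $\bar\rho\in\mY_2$ along a subsequence. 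Lower semi-continuity of $\mU_m$ (convexity) and $\mV$ (moment control) is standard; for $\mW_k$ I would exploit the Strauss-type pointwise bound $\rho_n(x)\lesssim|x|^{-N}$ available for radial non-increasing densities with unit mass, together with the $L^m$ bound, to split the double integral into a near-diagonal piece uniformly controlled and a far piece converging by strong local compactness. The limit $\bar\rho$ is thus a radial non-increasing minimiser of $\mFr$.

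Any stationary state $\bar\rho_{\resc}$ of \eqref{eq:KSresc} is continuous by Lemma \ref{lem:sstatesreg}(ii); since $\bar S_k\in L^\infty(\RR^N)$ for $k\in(-N,0)$ by Lemma \ref{lem:regS}(i), the Euler-Lagrange identity \eqref{eq:ELresc} forces $\bar\rho_{\resc}(x)=0$ whenever $|x|^2>2(C_{k,\resc}+2\chi\|\bar S_k\|_\infty)$, yielding compact support and, through the same formula, a uniform $L^\infty$ bound. To identify the minimiser $\bar\rho$ constructed above with a stationary state, standard inner variations of the form $\rho+\eps(\varphi-\rho\int\varphi)$ with mass- and centre-preserving $\varphi$ yield that $\mTr[\bar\rho]$ is constant on each connected component of $\supp(\bar\rho)$, recovering \eqref{eq:ELresc}; the resulting $L^\infty$ bound upgrades $\bar\rho^m$ to $\mathcal{W}^{1,2}_{loc}(\RR^N)$ and hence $\bar\rho$ solves \eqref{eq:steady resc} distributionally, in the sense of Definition \ref{def:sstates resc}.

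The main obstacle will be the lower semi-continuity of the singular interaction $\mW_k$ along the minimising sequence, especially as $k\to -N^+$ where the kernel is most singular; ruling out mass concentration at the origin relies crucially on both the radial monotonicity reduction and the strict sub-critical gap $1-\chi/\chi_c>0$, which is what permits closing the argument without invoking full Lions-type concentration-compactness machinery.
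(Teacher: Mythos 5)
Most of your architecture coincides with the paper's: the virial identity obtained by testing \eqref{eq:steady} against $x$ (with the symmetrisation killing the principal-value singularity) is exactly Lemma \ref{lem:Fzero}, which combined with the strict HLS bound gives Theorem \ref{thm:sstatesmin}; the dilation argument for non-attainment of $\inf_{\mY}\mF_k=0$ is \eqref{eq:inf} in Proposition \ref{prop:existence min}; and your rescaled minimisation scheme (rearranged minimising sequence, pointwise envelope from the mass \emph{and} $L^m$ bounds, pointwise compactness, dominated convergence for $\mW_k$) is Proposition \ref{prop:existence min}(ii) almost verbatim — the paper's envelope is $b(R)=C_1\inf\{R^{-N},R^{-N/m}\}$ with $b\in L^{2N/(2N+k)}$, which makes plain dominated convergence work and renders your worried closing paragraph about near-diagonal splitting unnecessary. (Minor imprecision: the monotonicity $\mV[\rho^*]\leq\mV[\rho]$ is the reversed Hardy--Littlewood--Sobolev inequality, not Riesz rearrangement, which only handles $\mW_k$.)

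The genuine gap is your phrase ``the resulting $L^\infty$ bound'' for the \emph{minimiser} of $\mFr$: it is circular. A global minimiser is a priori only in $L^1\cap L^m$, and Lemma \ref{lem:regS} yields $S_k\in L^\infty$ only under the hypothesis $\rho\in L^1\cap L^\infty$ — which is what you are trying to prove. Nor can you extract boundedness of $|x|^k\ast\rho$ from $L^1\cap L^m$ by H\"older: in the fair-competition regime $m/(m-1)=(N-k)/(-k)$, so $|z|^{k\cdot m/(m-1)}=|z|^{k-N}$ and the near-field integral $\int_0^1 r^{k-1}\,dr$ diverges; the exponents fail exactly at the borderline. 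This is precisely why the paper devotes its heaviest machinery to this point: the induction Lemma \ref{lem:induction}, which bootstraps the local power bound $\rho(r)\lesssim 1+r^p$ to $\rho(r)\lesssim 1+r^{g(p)}$ with $g(p)=(p+N+k)/(m-1)$ via the Euler--Lagrange condition \eqref{eq:min resc}, requires the radial representation of $\partial_r(W_k\ast\rho)$ through the hypergeometric kernel $\psi_k$ of Appendix A, with four separate cases and delicate principal-value cancellations near $s=1$ in the singular range $-N<k\leq 1-N$; Corollary \ref{cor:boundedness_min} then iterates $g$ from $p_0=-N/m$ to reach a positive exponent. Without this, the theorem's ``uniformly bounded'' claim is unproved, and so is the identification of minimisers as stationary states: Definition \ref{def:sstates resc} demands $\bar u\in L^\infty$ and, for $-N<k\leq 1-N$, $\bar u\in C^{0,\alpha}$ with $\alpha\in(1-k-N,1)$ so that the principal value $\nabla\bar S_k$ is even locally integrable — regularity the paper must earn in Corollary \ref{cor:regmin} via a fractional Sobolev bootstrap, and which your ``upgrades $\bar\rho^m$ to $\mathcal{W}^{1,2}_{loc}$'' step silently assumes. (Your compact-support argument for stationary states also treats $C_{k,\resc}$ as a single constant although \eqref{eq:ELresc} only fixes it per connected component; the paper's Corollary \ref{compactsupp_sstates} is equally terse on this, so I do not count it against you.)
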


\begin{remark}\label{rmk:!}
Due to the homogeneity  \eqref{dil} of the functional $\mF_k$, each global minimiser gives rise to a family of global minimisers for $\chi=\chi_c$ by dilation since they have zero energy, see \eqref{eq:inf} below. It is an open problem to show that there is a unique global minimiser for $\chi=\chi_c$ modulo dilations. This uniqueness was proven in the Newtonian case in \cite{Yao}. We will explore the uniqueness modulo dilations of global minimisers in radial variables in a forthcoming paper. Notice that one obtains the full set of stationary states with bounded second moment for \eqref{eq:KS} as a by-product.

In contrast, in rescaled variables, we do not know if stationary states with second moment bounded are among global minimisers of $\mFr$ for the sub-critical case $0<\chi<\chi_c$ except in one dimension, see \cite{CCHCetraro}. It is also an open problem to show the uniqueness of radially symmetric stationary states of the rescaled equation \eqref{eq:KSresc} for $N\geq2$. 
\end{remark}

\begin{theorem}[The Fast Diffusion Regime]\label{mainthm3}
 In the fast diffusion regime $k \in (0,N)$ equation \eqref{eq:KS} has no radially symmetric non-increasing stationary states with $k$th moment bounded, and there are no radially symmetric non-increasing global minimisers for the energy functional $\mF_k$ for any $\chi>0$. In rescaled variables, radially symmetric non-increasing stationary states can only exist if $0<k< 2$, that is $(N-2)/N<m<1$. Similarly, global minimisers with finite energy $\mFr$ can only exist in the range $0<k<2N/(2+N)$, that is $N/(2+N)<m<1$. For $k \in (0,1]$, there exists a continuous radially symmetric non-increasing stationary state of the rescaled equation \eqref{eq:KSresc}.
\end{theorem}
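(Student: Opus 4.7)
The proof decomposes into the five listed claims, and every non-existence statement hinges on the same mechanism: the fair-competition relation $N(m-1)+k=0$ pins the tail of any would-be stationary state at the borderline non-integrable decay $|x|^{-N}$. For parts~(i) and~(ii), I would start from a putative radial non-increasing stationary state $\bar\rho$ of~\eqref{eq:KS} with bounded $k$-th moment. Lemma~\ref{lem:sstatesreg} gives continuity, and mimicking the derivation of Corollary~\ref{cor:EL} on the connected component $\{\bar\rho>0\}$ produces the algebraic identity
\begin{equation*}
\frac{m}{N(m-1)}\,\bar\rho^{m-1}(x)+2\chi\,\bar S_k(x)=C\, .
\end{equation*}
Compact support is ruled out because at $\partial\{\bar\rho>0\}$ one has $\bar\rho\to 0$, forcing $\bar\rho^{m-1}\to+\infty$, whereas $\bar S_k$ stays bounded by Lemma~\ref{lem:regS}. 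Hence the identity holds on all of $\RR^N$, and using $\|\bar\rho\|_1=1$ together with the $k$-th moment bound, a dominated convergence argument yields $\bar S_k(x)\sim|x|^k/k$ as $|x|\to\infty$. Plugging this asymptotic into the identity forces $\bar\rho(r)\sim c_\chi\,r^{k/(m-1)}=c_\chi\,r^{-N}$ by the fair-competition exponent, contradicting $\bar\rho\in L^1(\RR^N)$. A radial non-increasing global minimiser of $\mF_k$ must satisfy the same Euler--Lagrange identity through mass- and center-preserving variations and therefore falls to the same contradiction.

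\medskip

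For part~(iii), the rescaled Euler--Lagrange identity differs only by the addition of $|x|^2/2$ on the right-hand side, and the tail analysis now pits $|x|^2$ against $|x|^k$. When $k<2$ the quadratic confinement dominates, giving $\bar\rho_\resc(r)\sim r^{2/(m-1)}$, whose integrability is equivalent to $2/(1-m)>N$, that is $k<2$; for $k\geq 2$ the ambient $|x|^k$ term takes over and the non-integrable $r^{-N}$ tail reappears. For the minimisation claim~(iv), I would rely on a scale-invariant Gagliardo--Nirenberg-type interpolation
\begin{equation*}
\int_{\RR^N}\rho^m\,dx\;\leq\;C\,\mV[\rho]^{k/2}\, ,
\end{equation*}
proved by splitting the integral over $\{|x|<R\}\cup\{|x|\geq R\}$, applying H\"older against the weight $|x|^2$ on the far-field piece, and optimising in $R$. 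The far-field H\"older step produces a finite constant precisely when $m>N/(N+2)$, equivalently $k<2N/(N+2)$, yielding $\mFr\geq-C\,\mV^{k/2}+\mV/2$, which is bounded below since $k/2<1$. In the complementary range $k\geq 2N/(N+2)$ I would exhibit power-law test densities $\rho_\eps(x)\sim|x|^{-\alpha}$ with $\alpha=N/m+\eps$: direct computation gives $\int\rho_\eps^m\,dx\sim 1/\eps\to+\infty$ while $\mV[\rho_\eps]$ and $\mW_k[\rho_\eps]$ remain bounded, forcing $\inf\mFr=-\infty$.

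\medskip

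Finally, for part~(v) --- the existence of a radial non-increasing stationary state of~\eqref{eq:KSresc} when $k\in(0,1]$ --- I would invert the algebraic Euler--Lagrange identity from~(iii) into a fixed-point map. Given a radial non-increasing profile $\rho$, set
\begin{equation*}
T\rho(x)=\left[\frac{N(1-m)}{m}\left(2\chi\,S_k[\rho](x)+\frac{|x|^2}{2}-C[\rho]\right)_+\right]^{1/(m-1)}\, ,
\end{equation*}
with $C[\rho]$ tuned so that $\int T\rho\,dx=1$, and seek a fixed point via Schauder on a suitable convex set of radial non-increasing densities with prescribed far-field control. The restriction $k\leq 1$ is precisely what guarantees $\nabla S_k\in L^\infty(\RR^N)$ via Lemma~\ref{lem:regS}, which provides the uniform control needed on the exponent, while the radial monotonicity properties of $S_k$ extracted from the hypergeometric identities of Appendix~A keep the cone of non-increasing profiles invariant. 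The main obstacle I foresee is the compactness and continuity of $T$: the exponent $1/(m-1)$ is negative, the positive-part truncation introduces a nonsmooth cut-off, and the critical tail dictated by the fair-competition scaling calls for delicate uniform decay estimates to prevent escape of mass to infinity.
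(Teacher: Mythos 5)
For parts (i)--(iii) your mechanism is essentially the paper's (Theorems \ref{thm:noEsstates}, \ref{thm:noEmins} and \ref{thm:noEsstates resc}): the Euler--Lagrange identity on the support, exclusion of compact support by blow-up of $\rho^{m-1}=\rho^{-k/N}$ at the free boundary against local boundedness of $\bar S_k$, and then the fair-competition tail $r^{-N}$ (resp.\ $r^{-2N/k}$ in rescaled variables) killing integrability. The only stylistic difference is that you obtain $\bar S_k(x)\sim |x|^k/k$ asymptotically by dominated convergence, whereas the paper proves the pointwise two-sided bound of Lemma \ref{lem:kthmomentbound}, $I_k[\rho]\le \tfrac{|x|^k}{k}\ast\rho\le\eta\bigl(\tfrac{|x|^k}{k}+I_k[\rho]\bigr)$, from $\Delta W_k>0$ and $|x-y|^k\le\eta(|x|^k+|y|^k)$; the pointwise version is preferable because it is recycled verbatim in the existence proof of part (v). Both are sound.

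Part (iv) is where you take a genuinely different route. The paper never studies $\inf\mFr$: it reads the tails $\rho\sim r^{-2N/k}$ off the Euler--Lagrange identity \eqref{eq:min resc kpos 1} together with \eqref{eq:kthmomentbound} (Proposition \ref{prop:charac min kpos}, Remark \ref{rmk:kregimes min}) and checks that $\mV[\rho]<\infty$ and $\rho^m\in L^1$ force $k<2N/(2+N)$. Your variational route is correct for $k>2N/(2+N)$: since $m<1$ the entropy enters $\mFr$ with a negative sign, $N/m=N^2/(N-k)>N+2$ exactly when $k>2N/(2+N)$, and $N/m>N+k$ always, so along your family $\mV$ and $\mW_k$ stay uniformly bounded while $\int\rho_\eps^m\sim 1/(\eps m)$, hence $\inf\mFr=-\infty$ — a conclusion even stronger than the paper's (no minimiser of any kind, not merely none with finite energy). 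Your interpolation $\int\rho^m\le C\,\mV[\rho]^{k/2}$ also checks out (the far-field H\"older step needs $2m/(1-m)>N$, i.e.\ $k<2N/(2+N)$), and since $\mW_k\ge0$ it actually proves $\mFr$ bounded below in that range — a statement the paper only conjectures. The one genuine gap is the endpoint $k=2N/(2+N)$, which the theorem's statement includes: there $N/m=N+2$ and $\mV[\rho_\eps]\sim 1/\eps$ blows up, so your family fails. It is fixable: truncate $\rho(r)\sim r^{-(N+2)}(\log r)^{-\beta}$ with $1<\beta\le 1/m$ at radius $R\to\infty$, so that $\mV$ and the $k$th moment stay bounded while $\int\rho^m\to\infty$.

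Part (v) is the serious gap: your plan coincides with the paper's Theorem \ref{thm:existence sstates} (invert the Euler--Lagrange identity into a map $T$, apply Schauder on a cone of radial non-increasing densities), but everything that constitutes the proof is exactly what you defer as ``obstacles''. Concretely, the paper (a) tunes $C[\rho]$ and proves the \emph{uniform} two-sided bound $0<\underline{\delta}\le A I_k[\rho]+C[\rho]\le\overline{\delta}$ of \eqref{deltabound}, which yields the $\rho$-independent sandwich $m(x)\le T\rho(x)\le M(x)$ in \eqref{mbounds} giving uniform boundedness and uniform decay; (b) proves equi-H\"older continuity of $T\rho$ via $\bigl||x_1-y|^k-|x_2-y|^k\bigr|\le|x_1-x_2|^k$ and the Lipschitz bound for $u(G)=G^{-N/k}$ on $[\underline{\delta},\infty)$; (c) shows continuity of $\rho\mapsto C[\rho]$ and of $T$, and closes with Arzel\'a--Ascoli plus Schauder. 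Note that the restriction $k\le1$ enters precisely through (a) and (b) — one needs $\eta=1$ in Lemma \ref{lem:kthmomentbound} so that upper and lower bounds carry the same factor, the subadditivity $|x-y|^k\le|x|^k+|y|^k$, and $\int|x|^k M(x)\,dx<\infty$ — not, as you assert, through $\nabla S_k\in L^\infty$ (Lemma \ref{lem:regS} is only invoked at the very end to verify Definition \ref{def:sstates resc}). Your map is also mis-stated: with $C[\rho]$ correctly normalised the argument of the power is bounded below by $\underline{\delta}>0$, so no positive part is needed; as written, $(\cdot)_+^{1/(m-1)}$ with $1/(m-1)=-N/k<0$ is not merely nonsmooth but ill-defined ($=+\infty$) wherever the truncation bites. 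The truncation is an artifact of the porous-medium case $m>1$; in fast diffusion stationary states have full support (Corollary \ref{cor:sstates kpos}). Finally, the hypergeometric machinery of Appendix A plays no role here — radial monotonicity of $W_k\ast\rho$ is elementary from $\Delta W_k>0$, as in the proof of Lemma \ref{lem:kthmomentbound}. As it stands, part (v) of your proposal is a program, not a proof.
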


\section{Porous Medium Case $k<0$}
\label{sec: Porous Medium Case k neg}
In the porous medium case, we have $-N<k<0$ and hence $1<m<2$. Our aim in this section is to make a connection between global minimisers of the functionals $\mF_k$ and $\mFr$ and stationary states of the equations \eqref{eq:KS} and \eqref{eq:KSresc} respectively. We will show that in the critical case $\chi=\chi_c$, global minimisers and stationary states are equivalent for original variables. In the sub-critical case $0<\chi<\chi_c$, all minimisers of $\mFr$ will turn out to be stationary states of the rescaled equation \eqref{eq:KSresc}. 

\subsection{Global Minimisers}
\label{sec:Existence of stationary states k neg}

A key ingredient for the analysis in the porous medium case are certain functional inequalities which are variants of the Hardy-Littlewood-Sobolev (HLS) inequality, also known as the weak Young's inequality (Theorem 4.3, \cite{LieLo01}):
\begin{align}
&\iint_{\RR^N\times \RR^N} f(x) {|x-y|^{k}} f(y)\, dxdy \leq C_{HLS}(p,q,\lambda) \|f\|_{L^p} \|f\|_{L^q}\, ,  \label{eq:HLS}\\ & \dfrac1p + \dfrac1q  = 2 + \dfrac k N \,  , \quad p,q>1\, ,\quad   k\in(-N,0) \, . \nonumber
\end{align}
\begin{theorem}[Variation of HLS]\label{thm:HLSm}
Let $k \in (-N,0)$. For $f \in L^1\left(\RR^N\right) \cap L^m\left(\RR^N\right)$, we have
\begin{equation}\label{eq:HLSm}
\left|\iint_{\RR^N\times \RR^N} f(x){|x-y|^{k}}f(y) dxdy\right| \leq C_* ||f||^{\frac{N+k}{N}}_1 ||f||^m_m,
\end{equation}
where $C_*(k,N)$ is defined as the best constant.
\end{theorem}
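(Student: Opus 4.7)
The idea is to reduce \eqref{eq:HLSm} to the classical HLS inequality \eqref{eq:HLS} by choosing the symmetric pair of exponents and then collapsing the single resulting $L^p$-norm down to $\|f\|_1$ and $\|f\|_m$ via H\"older interpolation. First I would specialise \eqref{eq:HLS} to the diagonal $p=q$: the constraint $2/p = 2+k/N$ forces the exponent $p_* := 2N/(2N+k)$, which satisfies $1<p_*<\infty$ since $-N<k<0$. Applied with this choice (after replacing $f$ by $|f|$ to handle signs if necessary), HLS gives
$$\left|\iint_{\RR^N\times\RR^N} f(x)\,|x-y|^k\, f(y)\,dx\,dy\right| \leq C_{HLS}(p_*,p_*,-k)\,\|f\|_{p_*}^2.$$

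Next I would interpolate $L^{p_*}$ between $L^1$ and $L^m$. Since $m=(N-k)/N$ in the fair-competition regime, the requirement $p_* \leq m$ is equivalent to $-k(N+k)\geq 0$, which holds automatically for $k \in (-N,0)$; hence $1\leq p_*\leq m$ and H\"older's inequality provides $\|f\|_{p_*} \leq \|f\|_1^{\theta}\,\|f\|_m^{1-\theta}$ with $1/p_* = \theta + (1-\theta)/m$. A direct computation using $1/p_* = (2N+k)/(2N)$ and $1/m = N/(N-k)$ gives
$$\theta = \frac{N+k}{2N}\, , \qquad 1-\theta = \frac{N-k}{2N} = \frac{m}{2}\, ,$$
so that squaring produces $\|f\|_{p_*}^2 \leq \|f\|_1^{(N+k)/N}\, \|f\|_m^m$.

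Combining these two bounds yields \eqref{eq:HLSm} with admissible constant $C_{HLS}(p_*,p_*,-k)$, and the best constant $C_*(k,N)$ in the statement is defined as the infimum of all admissible constants, which is therefore finite. I do not anticipate any substantive obstacle, since the proof is a direct concatenation of two classical inequalities. The only checkpoint is verifying $p_* \in [1,m]$, which is precisely the point at which the fair-competition identity $N(m-1)+k=0$ enters to align the interpolation exponents $(\theta,1-\theta)$ with the target powers $\bigl((N+k)/N,\,m\bigr)$; without this homogeneity the two sides of \eqref{eq:HLSm} would no longer scale identically under mass-preserving dilations and the inequality could not hold as stated.
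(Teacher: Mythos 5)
Your proposal is correct and follows essentially the same route as the paper: apply the classical HLS inequality \eqref{eq:HLS} on the diagonal $p=q=2N/(2N+k)$ and then collapse $\|f\|_{p}^2$ to $\|f\|_1^{(N+k)/N}\|f\|_m^m$ by H\"older interpolation between $L^1$ and $L^m$. Your computation of the interpolation exponents $\theta=(N+k)/(2N)$, $1-\theta=m/2$ and the verification $1\leq p_*\leq m$ merely make explicit the step the paper summarises as ``H\"older's inequality,'' so there is nothing to add.
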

\begin{proof} The inequality is a direct consequence of the standard HLS inequality \eqref{eq:HLS} by choosing $p = q = \tfrac {2N}{2N+k}$, and of H\"{o}lder's inequality. For $k \in (-N,0)$ and for any $f \in L^1\left(\RR^N\right) \cap L^m\left(\RR^N\right)$, we have
$$
\left| \iint_{\RR \times \RR} f(x) |x-y|^k f(y) dx dy \right|
\leq C_{HLS} ||f||_p^2
\leq C_{HLS} ||f||_1^{\frac{N+k}{N}} ||f||_m^m.
$$
Consequently, $C_*$ is finite and bounded from above by $C_{HLS}$.
\end{proof}

Now, let us compute explicitly the energy of stationary states:

\begin{lemma}\label{lem:Fzero}
 For any $-N<k<0$ and $\chi>0$, all stationary states $\bar \rho$ of \eqref{eq:KS} with $|x|^2\bar \rho \in L^1\left(\RR^N\right)$ satisfy $\mF_k\left[\bar \rho\right]=0$.
\end{lemma}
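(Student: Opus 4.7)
The strategy is a Pohozaev/virial identity: test the stationary equation \eqref{eq:steady} against the scaling vector field $x$ and exploit the $k$-homogeneity of both sides, namely $\div x=N$ on the diffusion side and Euler's identity $z\cdot\nabla W_k(z)=|z|^k=kW_k(z)$ on the interaction side. The reason this must produce $\mF_k[\bar\rho]=0$ is that $\mF_k$ is $(-k)$-homogeneous under mass-preserving dilations by \eqref{dil}; a stationary state is formally a critical point along the dilation curve $\bar\rho_\lambda(x)=\lambda^N\bar\rho(\lambda x)$, and differentiating $\mF_k[\bar\rho_\lambda]=\lambda^{-k}\mF_k[\bar\rho]$ at $\lambda=1$ forces $-k\mF_k[\bar\rho]=0$. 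The PDE-level argument is the concrete realisation of this.

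Formally, multiplying \eqref{eq:steady} by $x$ and integrating over $\RR^N$ gives
\[
\frac{1}{N}\int_{\RR^N} x\cdot\nabla\bar\rho^m\,dx = -2\chi\int_{\RR^N} x\cdot\bar\rho(x)\nabla\bar S_k(x)\,dx.
\]
Integration by parts on the left produces $-\int\bar\rho^m\,dx$. Expanding $\nabla\bar S_k$ as a convolution against $\bar\rho$ and symmetrising the double integral in $x\leftrightarrow y$, the right-hand side becomes $-\chi\iint(x-y)\cdot\nabla W_k(x-y)\bar\rho(x)\bar\rho(y)\,dxdy$, which by Euler's identity collapses to $-\chi k\,\mW_k[\bar\rho]$. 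Hence $\int\bar\rho^m\,dx=\chi k\,\mW_k[\bar\rho]$. Since $N(m-1)+k=0$ implies $\frac{1}{N(m-1)}=-\frac{1}{k}$,
\[
\mF_k[\bar\rho]=-\frac{1}{k}\int_{\RR^N}\bar\rho^m\,dx+\chi\,\mW_k[\bar\rho]=-\chi\,\mW_k[\bar\rho]+\chi\,\mW_k[\bar\rho]=0.
\]

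To justify this rigorously I would introduce a smooth radial cutoff $\phi_R\equiv 1$ on $B_R$, supported in $B_{2R}$, test \eqref{eq:steady} against $\phi_R(x)\,x$, and pass $R\to\infty$. On the diffusion side, IBP yields $-\int\bar\rho^m\phi_R\,dx-\frac{1}{N}\int\bar\rho^m\,x\cdot\nabla\phi_R\,dx$; the first term converges by dominated convergence since $\bar\rho\in L^1\cap L^\infty$ gives $\bar\rho^m\in L^1$, and the second is $O(\|\bar\rho^m\|_{L^1(B_{2R}\setminus B_R)})\to 0$. For the interaction term, Lemma~\ref{lem:regS} gives $\nabla\bar S_k\in L^\infty(\RR^N)$ for $-N<k<0$, and the assumption $|x|^2\bar\rho\in L^1$ together with $\bar\rho\in L^1$ yields $|x|\bar\rho\in L^1$ by Cauchy--Schwarz, so $x\cdot\bar\rho\nabla\bar S_k\in L^1$ and dominated convergence applies.

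The only genuinely delicate step, which I expect to be the main obstacle, is the symmetrisation $x\leftrightarrow y$ on the interaction integral. For $1-N<k<0$ one has the pointwise bound $|x\cdot\nabla W_k(x-y)\bar\rho(x)\bar\rho(y)|\leq|x||x-y|^{k-1}\bar\rho(x)\bar\rho(y)$, and the splitting $|x|\leq|x-y|+|y|$ reduces absolute integrability to controlling $\iint|x-y|^k\bar\rho(x)\bar\rho(y)\,dxdy$, which is finite by the HLS-type inequality \eqref{eq:HLSm}, together with a term absorbed by the moment bound and $\|\bar\rho\|_\infty$; Fubini then legitimises the swap. In the strongly singular range $-N<k\leq 1-N$, $\nabla\bar S_k$ is only defined as a Cauchy principal value, so I would regularise by replacing $\nabla W_k$ with $\nabla W_k\,\ind_{|x-y|\geq\eps}$, perform the symmetrisation on the truncated kernel (the near-diagonal antisymmetric piece cancels identically), and pass to the limit $\eps\to 0$ using the H\"older regularity of $\bar\rho$ imposed in Definition~\ref{def:sstates} to dominate the diagonal remainder.
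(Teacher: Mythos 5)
Your proposal is correct and follows essentially the same route as the paper: both arguments test \eqref{eq:steady} against the vector field $x$ (legitimised by the second-moment assumption via truncation), integrate by parts to produce $-\int\bar\rho^m$, symmetrise the interaction integral in $x\leftrightarrow y$ so that Euler's identity $(x-y)\cdot\nabla W_k(x-y)=k\,W_k(x-y)$ applies, and in the singular range $-N<k\leq 1-N$ exploit exactly the cancellation you describe (the paper phrases it as the singularity disappearing ``when splitting the interaction term in half and exchanging $x$ and $y$''). Your $\eps$-truncation of the kernel and the HLS-based Fubini justification merely spell out what the paper compresses into ``a standard approximation argument using suitable truncations.''
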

\begin{proof}
 Integrating \eqref{eq:steady} against $x$, we obtain for $1-N<k<0$:
 \begin{align*}
  &\frac{1}{N}\int_{\RR^N}x\cdot \nabla \bar \rho^m
  = -2\chi \iint_{\RR^N\times\RR^N} x\cdot (x-y)|x-y|^{k-2} \bar \rho(x)\bar\rho(y)\, dxdy\\
  &-\int_{\RR^N}\bar \rho^m
  = -\chi \iint_{\RR^N\times\RR^N} (x-y)\cdot (x-y)|x-y|^{k-2} \bar \rho(x)\bar\rho(y)\, dxdy\\
   &\frac{1}{N(m-1)}\int_{\RR^N}\bar \rho^m
  = -\chi \iint_{\RR^N\times\RR^N} \frac{|x-y|^{k}}{k} \bar \rho(x)\bar\rho(y)\, dxdy\, ,
 \end{align*}
 and the result immediately follows. If $-N<k\leq1-N$, we obtain
 \begin{align*}
  \frac{1}{N}\int_{\RR^N}x\cdot \nabla \bar \rho^m
  &= -2\chi \iint_{\RR^N\times\RR^N} x\cdot (x-y)|x-y|^{k-2} \bar \rho(x)\left[\bar\rho(y)-\bar \rho(x)\right]\, dxdy\\
  &= -\chi \iint_{\RR^N\times\RR^N} (x-y)\cdot (x-y)|x-y|^{k-2} \bar \rho(x)\bar\rho(y)\, dxdy
\end{align*}
since the singularity disappears when splitting the interaction term in half and exchanging $x$ and $y$ in the second part. Hence, we conclude as above. In both cases, a bounded second moment is necessary to allow for the use of $|x|^2/2$ as a test function by a standard approximation argument using suitable truncations.
\end{proof}

Let us point out that the the previous computation is possible due to the homogeneity of the functional $\mF_k$. In fact, a formal use of the Euler theorem for homogeneous functions leads  to this statement. This argument does not apply in the logarithmic case $k=0$. Here, it allows to connect stationary states and minimisers of $\mF_k$.\\

It follows directly from Theorem \ref{thm:HLSm}, that for all $\rho \in \mY$ and for any $\chi>0$,
\begin{equation*}
    \mF_k[\rho]\geq \frac{1-\chi C_*}{N(m-1)} ||\rho||_m^m\, ,
\end{equation*}
where $C_*=C_*(k,N)$ is the optimal constant defined in \eqref{eq:HLSm}. Since global minimisers have always smaller or equal energy than stationary states, and stationary states have zero energy by Lemma \ref{lem:Fzero}, it follows that $\chi \geq 1/C_*$. We define the \emph{critical interaction strength} by 
\begin{equation}\label{defchic}
 \chi_c(k,N):=\frac{1}{C_*(k,N)}\, ,
\end{equation}
and so for $\chi=\chi_c$, all stationary states of equation \eqref{eq:KS} are global minimisers of $\mF_k$.
We can also directly see that for $0<\chi<\chi_c$, no stationary states exist. 
These observations can be summarised in the following theorem:

\begin{theorem}[Stationary States in Original Variables] \label{thm:sstatesmin}
Let $-N<k<0$. For critical interaction strength $\chi=\chi_c$, all stationary states $\bar \rho$ of equation \eqref{eq:KS} with $|x|^2\bar \rho \in L^1\left(\RR^N\right)$ are global minimisers of $\mF_k$. For sub-critical interaction strengths $0<\chi<\chi_c$, no stationary states with $|x|^2\bar \rho \in L^1\left(\RR^N\right)$ exist for equation \eqref{eq:KS}. 
\end{theorem}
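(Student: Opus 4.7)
The plan is to assemble the two key pieces already in place: the variant HLS inequality from Theorem \ref{thm:HLSm}, which on the admissible class $\mY$ gives the lower bound
\begin{equation*}
\mF_k[\rho] \;\geq\; \frac{1-\chi C_*}{N(m-1)}\,\|\rho\|_m^m\,,
\end{equation*}
together with the zero-energy identity $\mF_k[\bar\rho]=0$ from Lemma \ref{lem:Fzero}, valid for any stationary state with bounded second moment. These two ingredients together completely determine the picture at and below criticality, so the whole proof is a comparison of values.

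For the critical case $\chi=\chi_c=1/C_*$, the prefactor $1-\chi C_*$ vanishes, so the HLS variant collapses to $\mF_k[\rho]\geq 0$ for every $\rho\in \mY$. Any stationary state $\bar\rho$ with $|x|^2\bar\rho\in L^1(\RR^N)$ then satisfies $\mF_k[\bar\rho]=0$ by Lemma \ref{lem:Fzero}, so it realises the infimum of $\mF_k$ over $\mY$ and is therefore a global minimiser. This gives the first assertion directly.

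For the sub-critical case $0<\chi<\chi_c$, I argue by contradiction. Suppose a stationary state $\bar\rho$ with $|x|^2\bar\rho\in L^1(\RR^N)$ exists. Since $\bar\rho\in L^1_+\cap L^\infty$ with $\|\bar\rho\|_1=1$ and is not identically zero, interpolation yields $0<\|\bar\rho\|_m^m<\infty$, while in the porous medium regime $N(m-1)>0$ and $1-\chi C_*>0$. The HLS variant then produces
\begin{equation*}
\mF_k[\bar\rho] \;\geq\; \frac{1-\chi C_*}{N(m-1)}\,\|\bar\rho\|_m^m \;>\; 0\,,
\end{equation*}
which contradicts $\mF_k[\bar\rho]=0$ from Lemma \ref{lem:Fzero}. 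Hence no such stationary state can exist.

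The main conceptual difficulty has been absorbed upstream, not here. The delicate step is Lemma \ref{lem:Fzero}, whose proof is an Euler-type homogeneity computation in which equation \eqref{eq:steady} is tested against $|x|^2/2$; it is precisely the bounded-second-moment hypothesis (via a truncation and approximation argument leveraging the regularity of $\bar\rho$ and $\bar S_k$ from Lemma \ref{lem:regS} and Corollary \ref{cor:EL}) that legitimises this test function. Once that lemma and the sharp variant HLS inequality are both in hand, the present theorem reduces to the one-line sign comparison carried out above, and no further estimates are required.
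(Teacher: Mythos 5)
Your proposal is correct and follows essentially the same route as the paper: the authors likewise combine the variant HLS bound $\mF_k[\rho]\geq \frac{1-\chi C_*}{N(m-1)}\|\rho\|_m^m$ from Theorem \ref{thm:HLSm} with the zero-energy identity $\mF_k[\bar\rho]=0$ of Lemma \ref{lem:Fzero}, so that at $\chi=\chi_c$ stationary states attain the infimum $0$, while for $0<\chi<\chi_c$ the strictly positive lower bound contradicts zero energy. Your closing remark correctly identifies that the real work sits upstream in Lemma \ref{lem:Fzero} (the homogeneity computation tested against $|x|^2/2$, legitimised by the bounded second moment), exactly as in the paper.
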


We now turn to the study of global minimisers of $\mF_k$ and $\mFr$ with the aim of proving the converse implication to Theorem \ref{thm:sstatesmin}.
Firstly, we have the following existence result: 

\begin{proposition}[Existence of Global Minimisers]\label{prop:existence min}
 Let $k \in (-N,0)$. 
 \begin{enumerate}[(i)]
  \item If $\chi=\chi_c$, then there exists a radially symmetric and non-increasing function $\tilde \rho \in \mY$ satisfying $\mF_k[\tilde \rho]=0$.
  \item \label{prop:existenceminsubcrit}
 If $\chi < \chi_c$, then $\mF_k$ does not admit global minimisers, but there exists a global minimiser $\tilde \rho$ of $\mFr$ in $\mY_2$.
  \item If $\chi>\chi_c$, then both $\mF_k$ and $\mFr$ are not bounded below.
 \end{enumerate}
\end{proposition}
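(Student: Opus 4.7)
The three cases hinge on two ingredients: the homogeneity $\mF_k[\rho_\lambda]=\lambda^{-k}\mF_k[\rho]$ with $-k>0$, and the variant HLS inequality (Theorem \ref{thm:HLSm}), which, combined with the fair--competition identity $N(m-1)=|k|$, yields the sharp lower bound
\begin{equation*}
\mF_k[\rho] \;\geq\; \frac{1-\chi/\chi_c}{N(m-1)}\,||\rho||_m^m\,, \qquad \rho\in\mY\,.
\end{equation*}
For (iii), sharpness of $C_*=1/\chi_c$ in \eqref{eq:HLSm} provides $\rho_*\in\mY\cap L^m(\RR^N)$ with $\mF_k[\rho_*]<0$ whenever $\chi>\chi_c$; the dilation family $(\rho_*)_\lambda$ then satisfies $\mF_k[(\rho_*)_\lambda]=\lambda^{-k}\mF_k[\rho_*]\to-\infty$ as $\lambda\to\infty$, while $\mV[(\rho_*)_\lambda]=\lambda^{-2}\mV[\rho_*]\to 0$, so both $\mF_k$ and $\mFr$ are unbounded below along the same sequence.

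For (ii), when $\chi<\chi_c$ the bound above is strictly positive on $\mY$, and the dilation identity with $\lambda\to 0^+$ gives $\inf_\mY \mF_k=0$; non-attainment follows since a minimiser would force $||\rho||_m=0$, impossible in $\mY$. For $\mFr$, summing the bound with $\mV/2$ gives coercivity in both $\mV$ and $||\cdot||_m$ on $\mY_2$. I would take a minimising sequence $\{\rho_n\}\subset\mY_2$, apply Riesz's rearrangement inequality to reduce to radially symmetric non-increasing functions (preserving $\mU_m$, decreasing $\mV$, and not increasing $\mF_k$ since the kernel $|x-y|^k$ with $k<0$ is itself radial decreasing), and extract a pointwise a.e. limit via Helly's selection theorem using the bounds $\rho_n(r)\leq C\min\{r^{-N/m},r^{-N}\}$ valid for unit-mass radial decreasing functions with uniform $L^m$ control. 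Tightness from the uniform second moment ensures $||\tilde\rho||_1=1$ and $\tilde\rho\in\mY_2$; lower semicontinuity of $\mU_m$ and $\mV$ follows from Fatou, while the pointwise bounds combined with HLS justify passing to the limit in $\mW_k$ by Vitali/dominated convergence, yielding $\mFr[\tilde\rho]\leq\liminf\mFr[\rho_n]$.

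For (i), at criticality the bound reduces to $\mF_k\geq 0$ on $\mY$, so minimisers at level zero are precisely the optimisers of \eqref{eq:HLSm} at unit mass. I would construct them by a concentration--compactness argument on a maximising sequence for the scale-invariant ratio $\iint f(x)|x-y|^k f(y)\,dxdy \big/ \big(||f||_1^{(N+k)/N}||f||_m^m\big)$: Riesz rearrangement produces a radial decreasing sequence; the two scalings $f\mapsto af$ and $f\mapsto b^N f(b\cdot)$ allow me to impose $||f_n||_1=||f_n||_m=1$; the same pointwise bounds as in (ii) together with Helly yield a pointwise a.e. limit $f$. Vanishing ($f\equiv 0$) is ruled out because the $L^1\cap L^m$ bounds dominate the double integral uniformly, so its limit would then be $0\neq C_*$; any loss of mass in the weak limit would, after renormalising by the two scalings, produce a competitor with ratio strictly exceeding $C_*$, contradicting sharpness. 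Hence $\tilde\rho:=f\in\mY$ is a radial non-increasing optimiser and $\mF_k[\tilde\rho]=0$ by the lower bound. The delicate step is precisely this last concentration--compactness argument: ruling out vanishing and dichotomy requires combining the scale invariance of the ratio with the strict subadditivity inherited from the sharpness of $C_*$, analogous to but technically heavier than the Newtonian analysis of \cite{BCL}, since in our range $-N<k\leq 1-N$ the kernel is considerably more singular at the origin.
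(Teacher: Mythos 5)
Your proposal is correct and follows essentially the same route as the paper: the variant HLS inequality together with the dilation homogeneity \eqref{dil} gives the infimum values and parts (ii)--(iii), and the rescaled minimiser is obtained exactly as in the paper's proof via symmetric decreasing rearrangement, the pointwise bounds $\rho(r)\lesssim \min\{r^{-N},r^{-N/m}\}$, Helly's selection theorem, tightness from the uniform second moment, Fatou, and the HLS-based dominating function $|x-y|^k b(|x|)b(|y|)\in L^1$ for passing to the limit in $\mW_k$. For (i), where the paper simply invokes the generalisation of \cite{BCL}, your reconstruction is sound, but your closing caveat is unnecessary: the two-parameter normalisation $\|f_n\|_1=\|f_n\|_m=1$ combined with Fatou and the definition of $C_*$ as the best constant already forces $\|f\|_1=\|f\|_m=1$ through the chain $C_*=\iint f(x)|x-y|^k f(y)\,dxdy \le C_* \|f\|_1^{(N+k)/N}\|f\|_m^m \le C_*$ (the first equality coming from dominated convergence of the numerator, which also rules out vanishing), so no strict-subadditivity or dichotomy analysis is needed.
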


\begin{proof} 
\begin{enumerate}[(i)]
Generalising the argument in \cite[Proposition 3.4]{BCL}, we obtain the following result for the behaviour of the free energy functional $\mF_k$: Let $\chi>0$. For $k \in (-N,0)$, we have
 \begin{equation}\label{eq:inf}
  I_k(\chi): = \inf_{\rho \in \mathcal Y} \mF_k[\rho] =
  \begin{cases}
   0 &\text{if} \quad \chi \in (0,\chi_c],\\
   -\infty &\text{if} \quad \chi > \chi_c \, ,
  \end{cases}
 \end{equation}
and the infimum $I_k(\chi)$ is only achieved if $\chi=\chi_c$. This implies statements (ii) and (iii) for $\mF_k$. Case (iii) directly follows also in rescaled variables as in \cite[Proposition 5.1]{BCL}. The argument in the sub-critical case (ii) for $\mFr$ is a bit more subtle than in the critical case (i) since we need to make sure that the second moment of our global minimiser is bounded. We will here only prove (ii) for rescaled variables, as (i) and (ii) in original variables are straightforward generalisations from \cite[Lemma 3.3]{BCL} and \cite[Proposition 3.4]{BCL} respectively. \\

Inequality \eqref{eq:HLSm} implies that the rescaled free energy is bounded on $\mY_2$ by
\begin{equation}\label{eq:Fbounds resc}
-\frac{C_*}{k} \left(\chi_c + \chi \right) ||\rho||_m^m + \frac{1}{2}\mV[\rho]
\geq\mFr[\rho] \geq -\frac{C_*}{k} \left(\chi_c - \chi \right) ||\rho||_m^m + \frac{1}{2}\mV[\rho],
\end{equation}
and it follows that the infimum of $\mFr$ over $\mY_2$ in the sub-critical case is non negative. Hence, there exists a minimising sequence $(p_j) \in \mY_2$, 
$$
\mFr[p_j] \to \mu:= \inf_{\rho \in \mY_2} \mFr[\rho].
$$
Note that $||p_j||_m$ and $\mV[p_j]$ are uniformly bounded, $||p_j||_m + \mV[p_j] \leq C_0$ say,  since from \eqref{eq:Fbounds resc}
\begin{equation*}
0<
-\frac{C_*}{k} \left(\chi_c - \chi \right) ||p_j||_m^m + \frac{1}{2}\mV[p_j]
\leq
\mFr[p_j]
\leq
\mFr[p_0].
\end{equation*}
Further, the radially symmetric decreasing rearrangement $(p_j^*)$ of $(p_j)$ satisfies 
$$
||p_j^*||_m=||p_j||_m, \quad
\mV[p_j^*] \leq \mV[p_j], \quad 
\mW_k[p_j^*] \leq \mW_k[p_j]
$$ by the reversed Hardy-Littlewood-Sobolev inequality \cite{kesavan} and Riesz rearrangement inequality \cite{LieLo01}. In other words, $\mFr[p_j^*] \leq \mFr[p_j]$ and so $(p_j^*)$ is also a minimising sequence.

To show that the infimum is achieved, we start by showing that $(p_j^*)$ is uniformly bounded at a point. For any choice of $R>0$, we have
\begin{align*}
 1 = ||p^*_j||_1 
 &= \sigma_N \int_0^\infty p_j^*(r) r^{N-1} \, dr \\
&\geq \sigma_N\int_0^R  p_j^*(r)r^{N-1} \, dr 
\geq \sigma_N \frac{R^N}{N} p_j^*(R)\,.
\end{align*}
Similarly, since $||p_j^*||_m$ is uniformly bounded,
\begin{align*}
C_0 &\geq ||p_j^*||_m^m = \sigma_N \int_0^\infty r^{N-1} p_j^*(r)^m \, dr \\
&\geq \sigma_N\int_0^R r^{N-1} p_j^*(r)^m \, dr 
\geq \sigma_N \frac{R^N}{N} p_j^*(R)^m\,.
\end{align*}
We conclude that
\begin{equation}\label{pbound2}
0 \leq p_j(R) \leq b(R) := C_1 \inf\left\{R^{-N}, R^{- \frac{N}{m}}\right\}, \qquad \forall R>0
\end{equation}
for a positive constant $C_1$ only depending on $N$, $m$ and $C_0$. Then by Helly's Selection Theorem there exists a subsequence $(p^*_{j_n})$ and a non-negative function $\tilde \rho:\RR^N \to \RR$ such that $p^*_{j_n} \to \tilde \rho$ pointwise almost everywhere. In addition, a direct calculation shows that $ x \mapsto b(|x|) \in L^{\frac{2N}{2N+k}}\left(\RR^N\right)$, and hence, using \eqref{eq:HLS} for $p=q=2N/(2N+k)$, we obtain
$$
(x,y) \mapsto |x-y|^k b(|x|)b(|y|)\, \in L^1(\RR^N \times \RR^N).
$$
Together with \eqref{pbound2} and the pointwise convergence of $(p^*_{j_n})$, we conclude
$$
\mathcal W_k (p^*_{j_n}) \to \mathcal W_k (\tilde \rho) < \infty
$$
by Lebesgue's dominated convergence theorem. 
In fact, since $||p^*_{j_n}||_m$ and $\mV[p^*_{j_n}]$ are uniformly bounded and $||p^*_{j_n}||_1=1$, we have the existence of a subsequence $(p^*_{j_l})$ and a limit $P \in L^1\left(\RR^N\right)$ such that $p^*_{j_l} \to P$ weakly in $L^1\left(\RR^N\right)$ by the Dunford-Pettis Theorem. 
Using a variant of Vitali's Lemma \cite{Rudin87}, we see that the sequence $(p^*_{j_l})$ actually converges strongly to $\tilde \rho$ in $L^1\left(\RR^N\right)$ on all finite balls in $\RR^N$. In other words, $P=\tilde \rho$ almost everywhere. Furthermore, $\tilde \rho$ has finite second moment by Fatou's Lemma,
$$
\mV[\tilde \rho] \leq \liminf_{l \to \infty} \mV[p^*_{j_l}] \leq C_0,
$$
and by convexity of $|.|^m$ for $m\in(1,2)$, we have lower semi-continuity,
$$
\int \tilde \rho^m \leq \liminf_{l \to \infty} \int \left(p^*_{j_l}\right)^m \leq C_0.
$$
We conclude that $\tilde \rho \in \mY_2$ and
\begin{align*}
 \mFr[\tilde \rho] 
 \leq \lim_{l\to \infty} \mFr[p^*_{j_l}] = \mu.
\end{align*}
Hence, $\tilde \rho$ is a global minimiser of $\mFr$.
\end{enumerate}
\end{proof}
\begin{remark}
 The existence result in original variables also provides optimisers for the variation of the HLS inequality \eqref{eq:HLSm}, and so the supremum in the definition of $C_*(N,k)$ is in fact attained.
\end{remark}

The following necessary condition is a generalisation of results in \cite{BCL}, but using a different argument inspired by \cite{CCV}.

\begin{proposition}[Necessary Condition for Global Minimisers] \label{prop:charac min}
Let $k\in(-N,0)$.
\begin{enumerate}[(i)]
 \item If $\chi=\chi_c$ and $\rho \in \mY$ is a global minimiser of $\mF_k$, then $\rho$ is radially symmetric non-increasing, satisfying 
 \begin{equation}\label{eq:min}
   \rho^{m-1}(x) = 
	    \frac{N(m-1)}{m}\left( -2\chi \frac{|x|^k}{k} \ast \rho(x) + D_k[\rho]\right)_+ \quad \text{a.e. in}\, \,  \RR^N.
 \end{equation}
Here, we denote
$$
D_k[\rho] := 2 \mF_k[\rho] + \frac{m-2}{N(m-1)} ||\rho||_m^m.
$$
\item If $0<\chi<\chi_c$ and $\rho \in \mY_2$ is a global minimiser of $\mFr$, then $\rho$ is radially symmetric non-increasing, satisfying 
	  \begin{equation}\label{eq:min resc}
	    \rho^{m-1}(x) = 
	    \frac{N(m-1)}{m}\left( -2\chi \frac{|x|^k}{k} \ast \rho(x)-\frac{|x|^2}{2} + D_{k,\resc} [\rho]\right)_+ \quad \text{a.e. in}\, \,  \RR^N.
	  \end{equation}
Here, we denote 
\begin{align*}
D_{k, \resc}[\rho] :=\, &2 \mFr[\rho] - \frac{1}{2} \mV[\rho] + \frac{m-2}{N(m-1)} ||\rho||_m^m\, .
\end{align*}
\end{enumerate}
\end{proposition}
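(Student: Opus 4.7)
The plan is to first establish that every global minimiser is radially symmetric non-increasing, then to extract the obstacle-type identity \eqref{eq:min} from the first-order optimality condition, and finally to identify the Lagrange constant by an energy-balance computation.

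For the radial symmetry in part (i), I would invoke the Riesz rearrangement inequality applied to the strictly symmetric decreasing kernel $|x-y|^k$, $k\in(-N,0)$:
\begin{equation*}
\iint_{\RR^N\times\RR^N}\rho(x)|x-y|^k\rho(y)\,dxdy\leq\iint_{\RR^N\times\RR^N}\rho^*(x)|x-y|^k\rho^*(y)\,dxdy,
\end{equation*}
where $\rho^*$ denotes the symmetric decreasing rearrangement of $\rho$. Dividing by $k<0$ gives $\mW_k[\rho^*]\leq\mW_k[\rho]$, while equimeasurability preserves $\mU_m[\rho^*]=\mU_m[\rho]$, so $\mF_k[\rho^*]\leq\mF_k[\rho]$. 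Since $\rho$ is a minimiser, equality must hold, and the strict version of Riesz's inequality (Burchard) forces $\rho$ to agree with a translate of $\rho^*$. The centre-of-mass constraint $\int x\rho\,dx=0$ then pins this translate to the origin, so $\rho=\rho^*$. For part (ii) the same argument applies, with the additional observation that $\mV[\rho^*]\leq\mV[\rho]$ because $|x|^2$ is radially increasing, so $\rho^*\in\mY_2$ and $\mFr[\rho^*]\leq\mFr[\rho]$.

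For the Euler--Lagrange identity, I would use the convex perturbation $\rho_t:=(1-t)\rho+t\eta$ for $\eta\in\mY$ (respectively $\mY_2$) and $t\in[0,1]$. Minimality and differentiation at $t=0^+$ give
\begin{equation*}
\int_{\RR^N}\mT_k[\rho](x)\bigl(\eta(x)-\rho(x)\bigr)\,dx\geq 0\qquad\text{for every admissible }\eta,
\end{equation*}
where the integrand is controlled because $W_k\ast\rho\in L^\infty(\RR^N)$ by Lemma \ref{lem:regS} and $\rho^{m-1}\in L^{m/(m-1)}(\RR^N)$. Equivalently, $\rho$ minimises the linear functional $\eta\mapsto\int\mT_k[\rho]\eta\,dx$ over $\mY$, so $\mT_k[\rho]\geq C_1:=\int\mT_k[\rho]\rho\,dx$ almost everywhere in $\RR^N$, with equality a.e.\ on $\supp(\rho)$. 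On $\{\rho>0\}$ this reads $\tfrac{m}{N(m-1)}\rho^{m-1}=C_1-2\chi W_k\ast\rho>0$, while on $\{\rho=0\}$ it forces $C_1-2\chi W_k\ast\rho\leq 0$; the two cases combine into the single identity $\rho^{m-1}=\tfrac{N(m-1)}{m}(C_1-2\chi W_k\ast\rho)_+$. Finally, testing $\mT_k[\rho]=C_1$ on $\supp(\rho)$ against $\rho$ yields $C_1=\tfrac{m}{N(m-1)}\|\rho\|_m^m+2\chi\mW_k[\rho]$, and substituting the identity $\chi\mW_k[\rho]=\mF_k[\rho]-\tfrac{1}{N(m-1)}\|\rho\|_m^m$ gives precisely $C_1=D_k[\rho]$, establishing \eqref{eq:min}. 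Part (ii) follows the same template with $\mTr[\rho]=\mT_k[\rho]+|x|^2/2$ in place of $\mT_k[\rho]$: the extra quadratic enters inside the positive part, and the analogous energy balance contributes $-\tfrac{1}{2}\mV[\rho]$ to the constant, producing $D_{k,\resc}[\rho]$.

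The main delicate point is promoting ``$\rho^*$ is also a minimiser'' to ``$\rho$ is itself radially symmetric non-increasing'', which requires the strict equality case of Riesz's inequality together with the centre-of-mass constraint to rule out translations. The remainder of the proof is routine: both $\mY$ and $\mY_2$ are convex, so the perturbations $\rho_t$ are admissible, and the integrability needed to justify the first variation follows directly from Lemma \ref{lem:regS} and H\"{o}lder's inequality.
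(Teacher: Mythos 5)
Your proof is correct in substance, and while the symmetry step is essentially the paper's argument, your Euler--Lagrange derivation takes a genuinely different route. On symmetry: you and the paper both use Riesz rearrangement plus its equality case and the constraint to kill the translation (the paper cites Carlen--Loss rather than Burchard, uses the reversed HLS inequality to get $\mV[\rho^*]\leq\mV[\rho]$, and in part (ii) pins down $x_0=0$ via equality of second moments rather than the centre of mass --- all equivalent). On the optimality condition, however, the paper never forms a variational inequality over all of $\mY$: it works with two families of compactly supported perturbations, the multiplicative one $\varphi=\rho\left(\psi-\int\psi\rho\right)$ with even $\psi\in C_c^\infty$, giving $\mT_k[\rho]=D_k[\rho]$ a.e.\ on the support, and the additive one $\varphi=\psi-\rho\int\psi$ with $\psi\geq0$, giving $\mT_k[\rho]\geq D_k[\rho]$ a.e.\ in $\RR^N$, with differentiability justified by a dominated convergence argument. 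Your convex interpolation $\rho_t=(1-t)\rho+t\eta$ delivers both statements from a single variational inequality and is arguably cleaner; the differentiation is legitimate since the interaction terms are finite by Theorem \ref{thm:HLSm} applied to $\rho+\eta$, and $(\rho+\eta)^m\in L^1$ dominates the diffusion term. Your identification of the constant, $C_1=\int\mT_k[\rho]\rho\,dx=D_k[\rho]$ (and $D_{k,\resc}[\rho]$ in rescaled variables), checks out exactly.

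Two points in your write-up need repair, though both are fixable. First, you cannot invoke Lemma \ref{lem:regS} for $W_k\ast\rho\in L^\infty\left(\RR^N\right)$: that lemma requires $\rho\in L^\infty$, which is \emph{not} known at this stage --- boundedness of minimisers is proved only afterwards (Corollary \ref{cor:boundedness_min}), using the very Euler--Lagrange relation you are deriving, so citing it here would be circular. The claim is also false in general for $\rho\in L^1\cap L^m$ alone; fortunately it is not needed, since integrability of $\mT_k[\rho](\eta-\rho)$ follows from H\"older (as you note, $\rho^{m-1}\in L^{m/(m-1)}$) and HLS. Second, the step from the averaged inequality $\int\mT_k[\rho]\,\eta\,dx\geq C_1$ for all $\eta\in\mY$ to the pointwise bound $\mT_k[\rho]\geq C_1$ a.e.\ requires localising with competitors $\eta$ that satisfy the constraint $\int x\,\eta\,dx=0$; a generic indicator $\eta=\ind_E/|E|$ does not. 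Since you established $\rho=\rho^*$ first, $\mT_k[\rho]$ is radial and radially symmetric (or symmetrised, $\eta(x)=\left(\ind_E(x)+\ind_E(-x)\right)/(2|E|)$) competitors make the bathtub argument go through, but this should be said explicitly --- it is precisely why the paper restricts to even test functions $\psi(-x)=\psi(x)$. With those two clarifications your argument is a complete and somewhat more streamlined alternative to the paper's proof.
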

\begin{proof}
\begin{enumerate}[(i)]
 \item 
Let us write as in \eqref{eq:functional}
$$
\mF_k[\rho]=\mU_{1-k/N}[\rho] + \chi \mW_k[\rho], \qquad
\mU_{m}[\rho]= \frac{1}{N(m-1)} ||\rho||_m^m, \quad \mbox{and}
$$
$$
\mW_k [\rho] = \iint_{\RR^N \times \RR^N} \frac{|x-y|^k}{k} \rho(x) \rho(y)\, dx dy.
$$
We will first show that all global minimisers of $\mF_k$ are radially symmetric non-increasing. Indeed, let $\rho$ be a global minimiser of $\mF_k$ in $\mY$, then for the symmetric decreasing rearrangement $\rho^*$ of $\rho$, we have $\mU_m[\rho^*]=\mU_m[ \rho]$ and by the Riesz rearrangement inequality \cite[Lemma 2]{CaLo92}, $\mW[\rho^*] \leq \mW[\rho]$. So $\mF_k[\rho^*] \leq \mF_k[\rho]$ and since $ \rho$ is a global minimiser this implies $\mW_k[\rho^*]=\mW_k[\rho]$. By Riesz rearrangement properties \cite[Lemma 2]{CaLo92}, there exists $x_0 \in \RR^N$ such that $\rho(x) = \rho^*(x-x_0)$ for all $x \in \RR^N$. Moreover, we have 
$$
\int_{\RR^N} x \rho(x) \, dx = x_0 + \int_{\RR^N} x \rho^*(x) \, dx = x_0,
$$
and thus the zero centre-of-mass condition holds if and only if $x_0=0$, giving $ \rho = \rho^*$.
For any test function $\psi \in C_c^\infty\left(\RR^N\right)$ such that $\psi(-x)=\psi(x)$, we define
$$
\varphi(x)=\rho(x) \left( \psi(x) - \int_{\RR^N} \psi(x) \rho(x) \, dx \right).
$$
We fix $0 < \eps < \eps_0:=(2||\psi||_\infty)^{-1}$. Then
$$
\rho+\eps \varphi = \rho \left( 1+ \eps\left( \psi - \int_{\RR^N} \psi \rho \right) \right) \geq \rho \left(1-2||\psi||_\infty \eps\right) \geq0,
$$
and so $\rho + \eps \varphi \in L_+^1\left(\RR^N\right) \cap L^m\left(\RR^N\right)$. Further, $\int \varphi(x) \, dx = \int x \varphi(x) \, dx = 0$, and hence $\rho + \eps \varphi \in \mY$. Note also that $\supp (\varphi) \subseteq \bar\Omega:=\supp(\rho)$. To calculate the first variation $\mT_k$ of the functional $\mF_k$, we need to be careful about regularity issues. Denoting by $\Omega$ the interior of $\bar \Omega$, we write
\begin{align*}
 \frac{\mF_k[\rho+\eps \varphi] - \mF_k[\rho]}{\eps}
 &= \frac{1}{N(m-1)} \int_{\Omega} \frac{(\rho+\eps\varphi)^m - \rho^m}{\eps} \, dx \\
 &\qquad+ 2 \chi \int_{\RR^N} \left( \frac{|x|^k}{k} \ast \rho(x)\right) \varphi(x) \, dx + \eps \mW_k[\varphi]\\
 &= \frac{m}{N(m-1)} \int_0^1 \mG_\eps(t) \, dt \\
 &\qquad+ 2 \chi \int_{\RR^N} \left( \frac{|x|^k}{k} \ast \rho(x)\right) \varphi(x) \, dx + \eps \mW_k[\varphi],
\end{align*}
where $\mG_\eps(t) := \int_{\Omega} \left| \rho+t\eps\varphi\right|^{m-2}(\rho+t\eps\varphi) \varphi\, dx$. Then by H\"{o}lder's inequality,
$$
\left| \mG_\eps(t)\right| \leq \left(||\rho||_m+\eps_0||\varphi||_m\right)^{m-1}||\varphi||_m
$$
for all $t \in [0,1]$ and $\eps \in (0,\eps_0)$. Lebesgue's dominated convergence theorem yields
$$
\int_0^1 \mG_\eps(t) \, dt \to \int_{\Omega} \rho^{m-1}(x) \varphi(x)\, dx
$$
as $\eps \to 0$. In addition, one can verify that $\mW_k[\varphi]\leq 4 ||\psi||_\infty^2 \mW_k[\rho]<\infty$. Hence,
\begin{align*}
 \lim_{\eps \to 0} \left(\frac{\mF_k[\rho+\eps \varphi] - \mF_k[\rho]}{\eps}\right)
 &= \frac{m}{N(m-1)} \int_{\Omega}\rho^{m-1}(x) \varphi(x) \, dx \\
 &\qquad+ 2 \chi \int_{\RR^N} \left( \frac{|x|^k}{k} \ast \rho(x)\right) \varphi(x) \, dx\\
 &= \int_{\RR^N} \mT_k[\rho](x) \varphi(x)\, dx\, ,
\end{align*}
proving \eqref{eq:1stvar}.
Since $\rho$ is a global minimiser, $\mF_k[\rho+\eps \varphi] \geq \mF_k[\rho]$ and hence $\int \mathcal T_k[\rho](x) \varphi(x)\, dx \geq 0$.
Taking $-\psi$ instead of $\psi$, we obtain by the same argument $\int \mathcal T_k[\rho](x) \varphi(x)\, dx \leq 0$, and so
$$
\int_{\RR^N} \mathcal T_k[\rho](x) \varphi(x)\, dx =0.
$$
Owing to the choice of $\varphi$,
\begin{align*}
 0=\int_{\RR^N} \mT_k[\rho](x) \varphi(x)\, dx 
 &= \int_{\RR^N} \mT_k[\rho](x) \rho(x)\psi(x)\, dx \\
 &\qquad- \left(\int_{\RR^N} \psi \rho\right)\left(\int_{\RR^N} \mT_k[\rho](x) \rho(x)\, dx\right)\\
 &= \int_{\RR^N} \left(  \frac{m}{N(m-1)}\rho^m(x) +2\chi\rho(x) \left(\frac{|x|^k}{k} \ast \rho(x)\right) \right)\psi(x)\, dx \\
 &\qquad- \left(\int_{\RR^N} \psi \rho\right)\left(2 \mF_k[\rho] + \frac{m-2}{N(m-1)} ||\rho||_m^m\right)\\
 &= \int_{\RR^N} \rho(x) \psi(x)\left(\mT_k[\rho](x) - D_k[\rho]\right)\, dx 
\end{align*}
for any symmetric testfunction $\psi \in C_c^\infty\left(\RR^N\right)$. Hence $\mT_k[\rho](x) = D_k[\rho]$ a.e. in $\bar\Omega$, i.e.
\begin{equation}\label{eq:indomain}
 \rho^{m-1}(x) = \frac{N(m-1)}{m} \left( -2\chi \, \frac{|x|^k}{k} \ast \rho(x) + D_k[\rho]\right) \quad \text{a.e. in}\, \bar\Omega.
\end{equation}
Now, we turn to conditions over $\rho$ on the whole space. Let $\psi\in C_c^\infty\left(\RR^N\right)$, $\psi(-x)=\psi(x)$, $\psi \geq 0$, and define
$$
\varphi(x):=\psi(x)-\rho(x) \int_{\RR^N}\psi(x)\,dx \quad \in L^1\left(\RR^N\right) \cap L^m\left(\RR^N\right).
$$
Then for $0<\eps<\eps_0:=(||\psi||_\infty |\supp(\psi)|)^{-1}$, we have
$$
\rho+\eps\varphi\geq\rho\left(1-\eps\int_{\RR^N}\psi\right)\geq\rho\left(1-\eps||\psi||_\infty |\supp(\psi)|\right).
$$
So $\rho+\eps\varphi \geq0$ in $\bar\Omega$, and also outside $\bar\Omega$ since $\psi\geq0$, hence $\rho+\eps\varphi \in \mY$. Repeating the previous argument, we obtain 
$$
\int_{\RR^N}\mT_k[\rho](x)\varphi(x)\,dx \geq0.
$$
Using the expression of $\varphi$, we have
\begin{align*}
0\leq
 \int_{\RR^N} \mT_k[\rho](x) \varphi(x)\, dx 
 &= \int_{\RR^N} \mT_k[\rho](x) \psi(x)\, dx \\
 &\qquad- \left(\int_{\RR^N} \psi\right)\left(\int_{\RR^N} \mT_k[\rho](x) \rho(x)\, dx\right)\\
 &= \int_{\RR^N} \left(  \frac{m}{N(m-1)}\rho^{m-1}(x) +2\chi\frac{|x|^k}{k} \ast \rho(x) \right)\psi(x)\, dx \\
&\qquad - \left(\int_{\RR^N} \psi\right)\left(2 \mF_k[\rho] + \frac{m-2}{N(m-1)} ||\rho||_m^m\right)\\
 &= \int_{\RR^N} \psi(x)\left(\mT_k[\rho](x) - D_k[\rho]\right)\, dx \, .
\end{align*}
Hence $\mT_k[\rho](x) \geq D_k[\rho]$ a.e. in $\RR^N$, and so
\begin{equation}\label{eq:outsidedomain}
 \rho^{m-1}(x) \geq \frac{N(m-1)}{m} \left( -2\chi\, \frac{|x|^k}{k} \ast \rho(x) + D_k[\rho]\right) \quad \text{a.e. in }\, \RR^N.
\end{equation}
Combining \eqref{eq:indomain} and \eqref{eq:outsidedomain} completes the proof of \eqref{eq:min}.

\item First, note that if $\rho \in \mY_2$ and $\rho^*$ denotes the symmetric decreasing rearrangement of $\rho$, then it follows from the reversed Hardy-Littlewood-Sobolev inequality \cite{kesavan} that $\mV[\rho^*] \leq \mV[\rho]$.
 Since $\mU_m[\rho^*]=\mU_m[ \rho]$ and $\mW[\rho^*] \leq \mW[\rho]$, we conclude $\mFr[\rho^*] \leq \mFr[\rho]$. For a global minimiser $\rho \in \mY_2$, we have $\mFr[\rho^*] = \mFr[\rho]$ and hence $\mW[\rho^*] = \mW[\rho]$ and $\mV[\rho^*] = \mV[\rho]$. The former implies that there exists $x_0 \in \RR^N$ such that $\rho(x) = \rho^*(x-x_0)$ for all $x \in \RR^N$ by Riesz rearrangement properties \cite[Lemma 2]{CaLo92}, and so the equality in second moments gives $\rho=\rho^*$.\\
 Next, we will derive equation \eqref{eq:min resc}. We define for any testfunction $\psi \in C_c^\infty\left(\RR^N\right)$ the function $\varphi(x)=\rho(x) \left( \psi(x) - \int_{\RR^N} \psi(x) \rho(x) \, dx \right)$,
and by the same argument as in (i), we obtain
\begin{align*}
 0=\int_{\RR^N} \mTr[\rho](x) \varphi(x)\, dx 
 = \int_{\RR^N} \rho(x) \psi(x)\left(\mTr [\rho](x) - D_{k, \resc}[\rho]\right)\, dx\, ,
\end{align*}
with $\mTr$ as given in \eqref{eq:1stvar resc}.
Hence $\mTr[\rho](x) = D_{k, \resc}[\rho]$ a.e. in $\bar\Omega:=\supp(\rho)$.
Following the same argument as in (i), we further conclude
$\mTr[\rho](x) \geq D_{k,\resc}[\rho]$ a.e. in $\RR^N$. Together with the equality on $\bar\Omega$, this completes the proof of \eqref{eq:min resc}.
\end{enumerate}
\end{proof}

\begin{remark} For critical interaction strength $\chi=\chi_c$, 
if $\bar \rho$ is a stationary state of equation \eqref{eq:KS} with bounded second moment, then it is a global minimiser of $\mF_k$ by Theorem \ref{thm:sstatesmin}. In that case, we can identify the constant $C_k[\bar \rho]$ in \eqref{eq:EL} with $D_k[\bar \rho]$ in \eqref{eq:min}, which is the same on all connected components of $\supp(\bar \rho)$.
\end{remark}

\subsection{Regularity Properties of Global Minimisers}
Proposition \ref{prop:charac min} allows us to conclude the following useful Corollary, adapting some arguments developed in \cite{BCL}. 
\begin{corollary}[Compactly Supported Global Minimisers]\label{cor:compactsupp_min}
 If $\chi=\chi_c$, then all global minimisers in $\mY$ are compactly supported. If $0<\chi<\chi_c$, then global minimisers of $\mFr$ are compactly supported.
\end{corollary}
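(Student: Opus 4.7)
The plan is to apply the Euler--Lagrange characterisation of global minimisers derived in Proposition~\ref{prop:charac min} and show that the expression inside $(\cdot)_+$ becomes strictly negative for $|x|$ large, so that $\rho(x)=0$ outside a ball. Both global minimisers are already known to be radially symmetric and non-increasing.

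\textbf{Step 1 (decay of the interaction convolution).} For any radially symmetric non-increasing $\rho\in L^1(\RR^N)$ I would show
\begin{equation*}
I(x) := \int_{\RR^N}|x-y|^k\,\rho(y)\,dy \longrightarrow 0 \quad\text{as } |x|\to\infty.
\end{equation*}
Split $\RR^N=\{|y|\le |x|/2\}\cup\{|y|>|x|/2\}$. On the first set $|x-y|\ge |x|/2$, so its contribution is bounded by $(|x|/2)^{k}\|\rho\|_1\to 0$ since $k<0$. On the second set, further decompose according to $\{|x-y|<R\}\cup\{|x-y|\ge R\}$: the far-field piece is bounded by $R^{k}\|\rho\|_1$, which is small for $R$ large, while for the near-field piece, radial monotonicity and $\rho\in L^1$ imply $\rho(|x|/2)\to 0$ as $|x|\to\infty$, so this piece is bounded by $\rho(|x|/2)\int_{|z|<R}|z|^{k}\,dz\to 0$. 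First choosing $R$ large, then $|x|$ large, gives $I(x)\to 0$; in particular $I$ is bounded on $\RR^N$.

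\textbf{Step 2 (rescaled sub-critical case).} For $0<\chi<\chi_c$, Proposition~\ref{prop:existence min}\,(ii) gives a global minimiser $\rho\in\mY_2$ of $\mFr$ and Proposition~\ref{prop:charac min}\,(ii) yields the pointwise identity~\eqref{eq:min resc}. By Step~1 the convolution term is bounded uniformly in $x$, whereas $-|x|^2/2\to-\infty$. Hence the bracket inside $(\cdot)_+$ is negative for every sufficiently large $|x|$, giving $\rho(x)=0$ there.

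\textbf{Step 3 (critical case).} For $\chi=\chi_c$, any global minimiser $\rho\in\mY$ satisfies $\mF_k[\rho]=0$ by Proposition~\ref{prop:existence min}\,(i), hence
\begin{equation*}
D_k[\rho] = \frac{m-2}{N(m-1)}\|\rho\|_m^m < 0
\end{equation*}
because $m\in(1,2)$. By Step~1 the convolution term in~\eqref{eq:min} tends to $0$ as $|x|\to\infty$, so the bracket is eventually bounded above by $D_k[\rho]/2<0$; therefore $\rho(x)=0$ for $|x|$ large.

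\textbf{Main obstacle.} The only nontrivial task is Step~1, which is where radial monotonicity and $L^1$-integrability of the minimiser (provided by Proposition~\ref{prop:charac min}) enter crucially, the latter forcing $\rho(|x|/2)\to 0$ at infinity despite $|x-y|^k$ failing to be globally integrable. In the critical case, the extra input $\mF_k[\rho]=0$ on minimisers---so that $D_k[\rho]$ is strictly negative---is what prevents the constant term from spoiling the argument; in the rescaled sub-critical case the quadratic confinement does this job for free.
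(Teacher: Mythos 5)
Your proof is correct, but it takes a genuinely different route from the paper's. The paper argues by contradiction through integrability: assuming $\supp(\rho)=\RR^N$, the Euler--Lagrange identity holds with equality a.e.\ on all of $\RR^N$, and since $\rho^{m-1}\in L^{m/(m-1)}\left(\RR^N\right)$ and, by interpolation together with the weak Young/HLS convolution estimate \cite[Theorem 4.2]{LieLo01}, also $W_k\ast\rho\in L^{m/(m-1)}\left(\RR^N\right)$, the identity would force the strictly negative constant $D_k[\rho]$ (respectively $|x|^2+C$ in rescaled variables) to belong to $L^{m/(m-1)}\left(\RR^N\right)$, which is absurd. You instead prove pointwise decay $\left(|x|^k\ast\rho\right)(x)\to 0$ as $|x|\to\infty$ directly, using only $\rho\in L^1$, $k\in(-N,0)$, and the monotonicity bound $\rho(r)\lesssim r^{-N}$, and then read off from the $(\cdot)_+$ form of \eqref{eq:min} and \eqref{eq:min resc} that the bracket is eventually negative, so $\rho$ vanishes a.e.\ outside a ball. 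Both arguments rest on the radial monotonicity supplied by Proposition~\ref{prop:charac min} (the paper needs it to know the support is a centred ball, you need it additionally inside the decay estimate), and both use $\mF_k[\rho]=0$ at criticality to get $D_k[\rho]<0$; your version is more elementary, avoiding the $L^p$-convolution machinery entirely, and gives extra information (decay of the interaction potential and, in principle, explicit bounds on the support radius), at the price of being confined to the radial setting, whereas the paper's integrability contradiction would survive with less pointwise structure. One small overstatement to fix: your Step~1 only yields smallness of $I(x)$ for large $|x|$ --- the bound $(|x|/2)^{k}\|\rho\|_1$ on the first piece degenerates as $|x|\to 0$, and global boundedness of $W_k\ast\rho$ is not available at this stage (boundedness of minimisers is proved only later, in Corollary~\ref{cor:boundedness_min}, and uses compact support); fortunately Steps~2 and~3 only use the large-$|x|$ behaviour, so the argument is unaffected.
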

\begin{proof}
Let $\rho\in \mY$ be a global minimiser of $\mF_k$. Then $\rho$ is radially symmetric and non-increasing by Proposition \ref{prop:charac min} (i) and has zero energy by \eqref{eq:inf} above. Using the expression of the constant $D_k[\rho]$ given by Proposition \ref{prop:charac min} (i), we obtain
 $$
 D_k[\rho]= \frac{m-2}{N(m-1)}||\rho||_m^m <0
 $$
 Let us assume that $\rho$ is supported on $\RR^N$. We will arrive at a contradiction by showing that $\rho^{m-1}$ and $W_k \ast \rho$ are in $L^{m/(m-1)}\left(\RR^N\right)$. Since 
 $$
 D_k[\rho]= \frac{m}{N(m-1)} \rho(x)^{m-1} + 2\chi \, \frac{|x|^k}{k} \ast \rho(x)
 $$
 a.e. in $\RR^N$ by \eqref{eq:min}, this would mean that the constant $D_k[\rho]<0$ is in $L^{m/(m-1)}$ and decays at infinity, which is obviously false.\\
It remains to show that $W_k \ast \rho$ is in $L^{m/(m-1)}\left(\RR^N\right)$ since $\rho \in L^m\left(\RR^N\right)$ by assumption. From  $\rho \in L^1\left(\RR^N\right)\cap L^m\left(\RR^N\right)$ we have $\rho \in L^r\left(\RR^N\right)$ for all $r \in (1,m]$ by interpolation, and hence $W_k \ast \rho \in L^s\left(\RR^N\right)$ for all $s \in (-N/k, Nm/(k(1-m))]$ by \cite[Theorem 4.2]{LieLo01}. Finally, we conclude that $W_k \ast \rho$ is in $L^{m/(m-1)}\left(\RR^N\right)$ since $-N/k < m/(m-1) < Nm/(k(1-m))$.

In the sub-critical case for the rescaled functional $\mFr$, we argue as above to conclude that for any global minimiser $\rho$ in $\mY_2$ we have $\rho^{m-1}$ and $W_k \ast \rho$ in $L^{m/(m-1)}\left(\RR^N\right)$. If $\rho$ were supported on the whole space, it followed from the Euler-Lagrange condition for the rescaled equation \eqref{eq:min resc} that $|x|^2 +C \in L^{m/(m-1)}\left(\RR^N\right)$ for some constant $C$. This is obviously false. 
\end{proof}

The same argument works for stationary states by using the necessary conditions \eqref{eq:EL} and \eqref{eq:ELresc}.

\begin{corollary}[Compactly Supported Stationary States]\label{compactsupp_sstates}
 If $\chi=\chi_c$, then all stationary states of equation \eqref{eq:KS} are compactly supported. If $0<\chi<\chi_c$, then all stationary states of the rescaled equation \eqref{eq:KSresc} are compactly supported.
\end{corollary}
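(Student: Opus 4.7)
My plan is to mirror verbatim the argument of Corollary \ref{cor:compactsupp_min}, where the Euler-Lagrange identities \eqref{eq:min} and \eqref{eq:min resc} for minimisers are replaced by the corresponding identities \eqref{eq:EL} and \eqref{eq:ELresc} from Corollary \ref{cor:EL} for stationary states. The integrability input is the same: since $\bar\rho \in L^1_+ \cap L^\infty$, interpolation gives $\bar\rho \in L^r$ for all $r \in [1,\infty]$, so $\bar\rho^{m-1} \in L^{m/(m-1)}(\RR^N)$, and Theorem~4.2 of \cite{LieLo01} yields $\bar S_k = W_k \ast \bar\rho \in L^{m/(m-1)}(\RR^N)$.

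For the sub-critical rescaled case, I would argue by contradiction and assume $\supp(\bar\rho_\resc)$ is unbounded. Then \eqref{eq:ELresc} reads, on the support,
\[
\frac{m}{N(m-1)}\bar\rho_\resc^{m-1}(x) + 2\chi \bar S_k(x) + \frac{|x|^2}{2} = C_{k,\resc}[\bar\rho_\resc](x),
\]
with $C_{k,\resc}[\bar\rho_\resc]$ piecewise constant on the connected components. The first two terms on the left belong to $L^{m/(m-1)}$, so $\frac{|x|^2}{2} - C_{k,\resc}[\bar\rho_\resc]$ would have to lie in $L^{m/(m-1)}(\supp\bar\rho_\resc)$; but $|x|^2$ grows quadratically while $L^{m/(m-1)}$ functions must decay at infinity, so this is impossible on any unbounded set. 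Hence $\supp(\bar\rho_\resc)$ is bounded.

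For the critical original case, the role of $D_k[\rho]<0$ in the minimiser proof is played by $C_k[\bar\rho]$, whose strict negativity I would derive as follows: multiplying \eqref{eq:EL} by $\bar\rho$ and integrating (legitimate once one argues compact support first, or alternatively under the bounded second moment hypothesis required to invoke Lemma \ref{lem:Fzero}) gives $C_k[\bar\rho] = m\,\mU_{1-k/N}[\bar\rho] + 2\chi\,\mW_k[\bar\rho]$, which combined with the virial identity $\mU_{1-k/N}[\bar\rho] + \chi\mW_k[\bar\rho] = 0$ of Lemma \ref{lem:Fzero} yields $C_k[\bar\rho] = (m-2)\mU_{1-k/N}[\bar\rho] < 0$. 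Then the same $L^{m/(m-1)}$ reasoning as in Corollary \ref{cor:compactsupp_min} shows that if $\supp(\bar\rho) = \RR^N$ the constant $C_k[\bar\rho]$ would have to be an $L^{m/(m-1)}$-function decaying at infinity, contradicting $C_k[\bar\rho] < 0$.

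The main obstacle is the multi-component structure of $\supp(\bar\rho)$: unlike minimisers, which are radially symmetric non-increasing by Proposition \ref{prop:charac min} (so their support is a single ball), stationary states are not a priori radial, and the constants $C_k[\bar\rho]$ and $C_{k,\resc}[\bar\rho]$ may take different values on different connected components. Running the $L^{m/(m-1)}$ argument cleanly therefore requires arguing component-by-component and ruling out, in the rescaled case, the accumulation of infinitely many bounded components at infinity — both issues sidestepped in Corollary \ref{cor:compactsupp_min} by the single ball-shaped support of radial minimisers.
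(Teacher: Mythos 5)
Your plan is exactly the paper's proof: the paper disposes of this corollary in a single line, asserting that the argument of Corollary \ref{cor:compactsupp_min} goes through once the Euler--Lagrange identities \eqref{eq:min} and \eqref{eq:min resc} are replaced by the necessary conditions \eqref{eq:EL} and \eqref{eq:ELresc} of Corollary \ref{cor:EL} --- which is precisely what you do. The subtleties you flag are genuine, and neither is addressed by the paper's one-liner, so they deserve comment. On the sign of the constant in the critical case: your virial route is the right one, but drop the circular branch of your parenthesis (``once one argues compact support first''). Instead, observe that the contradiction hypothesis in the argument of Corollary \ref{cor:compactsupp_min} is $\supp(\bar\rho)=\RR^N$, which is connected, so $C_k[\bar\rho]$ is a single constant there; multiplying by $\bar\rho$ and integrating then gives $C_k[\bar\rho]=m\,\mU_{1-k/N}[\bar\rho]+2\chi\,\mW_k[\bar\rho]=(m-2)\,\mU_{1-k/N}[\bar\rho]<0$ by Lemma \ref{lem:Fzero}, valid under the second-moment hypothesis that the paper carries throughout the critical case (cf.\ Theorem \ref{thm:sstatesmin}). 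This step is not cosmetic: without excluding $C_k[\bar\rho]=0$, the pure $L^{m/(m-1)}$-decay argument yields no contradiction, since $\bar S_k\to 0$ at infinity and $\bar\rho^{m-1}=-\tfrac{2\chi N(m-1)}{m}\bar S_k$ is then perfectly consistent with full support.

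Your second flagged obstacle is also real, and your own interim claim falls to it: the assertion that $\tfrac{|x|^2}{2}-C_{k,\resc}$ ``cannot lie in $L^{m/(m-1)}$ on any unbounded set'' is false as stated when the constant varies across components, because on a far component $\Omega_i$ with boundary point $x_b$ one has $c_i=\tfrac{|x_b|^2}{2}+2\chi\bar S_k(x_b)\approx\tfrac{|x_b|^2}{2}$, so $\tfrac{|x|^2}{2}-c_i$ is small on $\Omega_i$ whenever the component has small diameter; an accumulation of infinitely many such components at infinity is not excluded by integrability alone. The quadratic-growth argument only kills a single unbounded connected component (there the constant is fixed while $|x|^2$ diverges). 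For minimisers this issue never arises because Proposition \ref{prop:charac min} makes them radially symmetric non-increasing, so the support is a single centred ball --- the very reduction unavailable for general stationary states. The paper's proof of Corollary \ref{compactsupp_sstates} silently inherits this gap; your proposal has the merit of naming it, but, like the paper, does not close it.
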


We are now ready to show that all global minimisers are uniformly bounded, both in original and rescaled variables. In what follows, we use the notation $f(r) \lesssim g(r)$ to denote $f(r) \leq Cg(r)$ for some constant $C>0$ in order to avoid having to track constants that are not important for the argument.

\begin{lemma}\label{lem:induction}
 Let $\rho$ be either a global minimiser of $\mF_k$ over $\mY$ or a global minimiser of $\mFr$ over $\mY_2$. If there exists $p \in (-N,0]$ such that
 \begin{equation}\label{ind1}
   \rho(r) \lesssim 1+r^p \, \qquad \text{for all} \, \, r \in (0,1)\, ,
 \end{equation}
then for $r \in (0,1)$,
 \begin{equation}\label{ind2}
  \rho(r) \lesssim 
  \begin{cases}
   1+r^{g(p)} &\text{if} \, \, p \neq -N-k\, ,\\
   1+\left|\log(r)\right|^{\frac{1}{m-1}} &\text{if} \, \, p = -N-k\, ,\\
  \end{cases}
 \end{equation}
 where
 \begin{equation}\label{defgp}
  g(p)=\frac{p+N+k}{m-1}\, .
 \end{equation}
\end{lemma}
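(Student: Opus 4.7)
The approach is to feed the hypothesis \eqref{ind1} into the Euler--Lagrange characterisation of Proposition \ref{prop:charac min}, reducing the statement to estimating the Riesz-type convolution $|x|^{k}\ast\rho(x)$ near the origin. Since $k<0$ gives $-1/k>0$, both \eqref{eq:min} (for $\chi=\chi_{c}$) and \eqref{eq:min resc} (for $\chi<\chi_{c}$) yield the pointwise bound
$$
\rho(r)^{m-1}\,\lesssim\,1+\int_{\RR^{N}}|x-y|^{k}\rho(y)\,dy,\qquad r=|x|\in(0,1),
$$
uniformly in the choice of original vs.\ rescaled variables; in the rescaled case the extra term $-|x|^{2}/2$ is bounded on $B_{1}$ and of the right sign.

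Next I would split the convolution into near- and far-field pieces. By Corollary \ref{cor:compactsupp_min}, $\rho$ is compactly supported in some $B_{R}$, so for $|x|<1/2$ and $|y|>1$ one has $|x-y|\geq 1/2$, hence $|x-y|^{k}\leq 2^{|k|}$ and $\int_{|y|>1}|x-y|^{k}\rho(y)\,dy\lesssim 1$. For the near-field piece I insert the hypothesis $\rho(y)\lesssim 1+|y|^{p}$ and split once more,
$$
\int_{|y|\leq 1}|x-y|^{k}\rho(y)\,dy\,\lesssim\,\int_{|y|\leq 1}|x-y|^{k}\,dy+\int_{|y|\leq 1}|x-y|^{k}|y|^{p}\,dy,
$$
where the first integral is uniformly bounded in $x$ since $k>-N$.

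The technical core is the second integral. Setting $y=rz$ and $e=x/|x|$ gives
$$
\int_{|y|\leq 1}|x-y|^{k}|y|^{p}\,dy\,=\,r^{p+N+k}\int_{|z|\leq 1/r}|e-z|^{k}|z|^{p}\,dz.
$$
The integrand $|e-z|^{k}|z|^{p}$ is locally integrable because $k,p>-N$ and behaves like $|z|^{k+p}$ for large $|z|$. A polar-coordinate computation then shows that $\int_{|z|\leq 1/r}|e-z|^{k}|z|^{p}\,dz$ is uniformly bounded in $r$ and in $e\in\Sphere^{N-1}$ when $p+N+k<0$, grows like $|\log r|$ when $p+N+k=0$, and grows like $r^{-(p+N+k)}$ when $p+N+k>0$. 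Combining the three regimes,
$$
|x|^{k}\ast\rho(x)\,\lesssim\,
\begin{cases}
1+r^{p+N+k} & \text{if }p+N+k<0,\\
1+|\log r| & \text{if }p+N+k=0,\\
1 & \text{if }p+N+k>0,
\end{cases}
$$
and taking $(m-1)$-th roots (using $(a+b)^{1/(m-1)}\lesssim a^{1/(m-1)}+b^{1/(m-1)}$) delivers \eqref{ind2}, noting that in the third regime $g(p)>0$ and so $1+r^{g(p)}\lesssim 1$ on $(0,1)$.

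The main obstacle I anticipate is the uniform-in-direction control of the rescaled integral and the clean separation into three regimes; careful book-keeping at the borderline case $p=-N-k$, where the logarithmic correction arises from $\int^{1/r}s^{-1}\,ds$ in the polar computation, is the only substantive technical point.
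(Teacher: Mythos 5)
Your proof is correct, but it takes a genuinely different and substantially more elementary route than the paper. The paper estimates $-W_k\ast\rho$ through its radial \emph{derivative}, writing $-\left(W_k\ast\rho\right)(r)=-\left(W_k\ast\rho\right)(1)+\int_r^1\partial_r\left(W_k\ast\rho\right)(s)\,ds$ and expressing $\partial_r\left(W_k\ast\rho\right)$ via the radial kernel $\psi_k$; this forces a four-way case distinction (sub-Newtonian $1-N<k<2-N$, Newtonian $k=2-N$, one dimension, and the singular range $-N<k\leq 1-N$), requires the hypergeometric machinery of the Appendix (sign, monotonicity and expansion of $\psi_k$ near $s=1$), and in the singular range demands a delicate principal-value cancellation of the divergent terms $\pm K_1(\delta/r)^{N+k-1}$ on either side of the singularity. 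Your key observation is that the Euler--Lagrange conditions \eqref{eq:min} and \eqref{eq:min resc} involve the \emph{potential} $W_k\ast\rho$ itself, not the force, and for every $k\in(-N,0)$ this is an ordinary convolution with a nonnegative locally integrable kernel -- no principal value ever enters -- so a direct scaling estimate of $\int_{|y|\leq 1}|x-y|^k|y|^p\,dy=r^{p+N+k}\int_{|z|\leq 1/r}|e-z|^k|z|^p\,dz$ treats the whole range $-N<k<0$ at once, with the three regimes $p+N+k\lessgtr 0$ and the borderline logarithm emerging from the elementary radial integral $\int^{1/r}s^{p+N+k-1}\,ds$. Two small remarks: the uniformity in $e\in\Sphere^{N-1}$ that you flag as the main obstacle is in fact vacuous, since the domain $\{|z|\leq 1/r\}$ is rotation-invariant and a rotation of the variable shows the integral is independent of $e$; and your restriction to $|x|<1/2$ in the far-field bound is harmless because for $r\in[1/2,1)$ the conclusion \eqref{ind2} already follows from radial monotonicity, $\rho(r)\leq\rho(1/2)\lesssim 1$ (also note your far-field estimate needs only $\|\rho\|_{L^1}=1$, not compact support). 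What the paper's heavier route buys is the $\psi_k$ toolbox itself, which it develops anyway and reuses for the precise behaviour of the mean-field force near the singularity; for this particular lemma your argument is shorter, avoids the Appendix entirely, and is uniform in $k$.
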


\begin{proof} Since global minimisers are radially symmetric non-increasing, we can bound $\rho(r)$ by $\rho(1)$ for all $r \geq 1$, and hence the bound \eqref{ind1} holds true for all $r>0$. Further, we know from Corollary \ref{cor:compactsupp_min} that all global minimisers are compactly supported. Let us denote $\supp(\rho)=B(0,R)$, $0<R<\infty$.
We split our analysis in four cases: (1) the regime $1-N<k<0$ with $k\neq 2-N$  and $N\geq 2$, where we can use hypergeometric functions in our estimates, (2) the Newtonian case $k=2-N$, $N\geq 3$, (3) the one dimensional regime $-1<k<0$ where we need a Cauchy principle value to deal with the singular integral in the mean-field potential gradient, but everyting can be computed explicitly, and finally (4) the regime $-N<k\leq1-N$ and $N\geq2$, where again singular integrals are needed to deal with the singularities of the hypergeometric functions.\\

\fbox{Case 1: $1-N<k<0$, $k\neq 2-N$, $N\geq 2$}\\

We would like to make use of the Euler-Lagrange condition \eqref{eq:min}, and hence we need to understand the behaviour of $W_k\ast \rho$. It turns out that it is advantegous to estimate the derivative instead, writing
\begin{equation}\label{eq:bd2}
 - \left(W_k \ast \rho\right)(r) =  - \left( W_k \ast \rho\right)(1)+\int_r^1\partial_r \left(W_k \ast \rho\right)(s) \, ds\, .
\end{equation}
The first term on the right-hand side can be estimated explicitly since for any $x \in \RR^N$ with $0<|x|=\gamma<\bar R$ for any $\bar R \geq R$:
\begin{align}\label{constest}
 - \left( W_k \ast \rho\right)(\gamma)
 = &\left(-\frac{1}{k}\right)\int_{B(0,R)\backslash B(0,\gamma/2)}|x-y|^k\rho(y)\, dy+\left(-\frac{1}{k}\right)\int_{B(0,\gamma/2)}|x-y|^k\rho(y)\, dy\notag\\
\lesssim &\left(1+\left(\frac{\gamma}{2}\right)^p\right)\int_{B(0,\bar R)\backslash B(0,\gamma/2)}|x-y|^k\, dy+\int_{0}^{\gamma/2}|\gamma-r|^k\rho(r)r^{N-1}\, dr\notag\\
\lesssim &\left(1+\left(\frac{\gamma}{2}\right)^p\right)\int_{B(x,\bar R+\gamma)}|x-y|^k\, dy+\left(\frac{\gamma}{2}\right)^k||\rho||_1\notag\\
= &\left(1+\left(\frac{\gamma}{2}\right)^p\right)\sigma_N \int_0^{\bar R+\gamma}r^{k+N-1}\, dr+\left(\frac{\gamma}{2}\right)^k<\infty\, .
\end{align}
For the second term, we use the formulation \eqref{derivpsi0} from the Appendix,
\begin{equation} \label{derivpsi}
 \partial_r \left(W_k \ast \rho\right)(r) = r^{k-1} \int_0^\infty \psi_k\left(\frac{\eta}{r}\right)\rho(\eta)\eta^{N-1}\, d \eta\, \,,
\end{equation}
where $\psi_k$ is given by \eqref{psi1} and can be written in terms of Gauss hypergeometric functions, see \eqref{psi2}. \\

\textbf{Sub-Newtonian Regime} $\mathbf{1-N<k<2-N}$\\
Note that $\psi_k(s)<0$ for $s>1$ in the sub-Newtonian regime $1-N<k<2-N$ (see Appendix Lemma \ref{lem:psibehaviour} and Figure \ref{fig:psik_fig1}). Together with the induction assumption \eqref{ind1} and using the fact that $\rho$ is compactly supported, we have for any $r \in (0,R)$
\begin{align}
\partial_r \left(W_k \ast \rho\right)(r) 
=\, &r^{k-1} \int_0^r \psi_k\left(\frac{\eta}{r}\right)\rho(\eta)\eta^{N-1}\, d \eta
+ r^{k-1} \int_r^R \psi_k\left(\frac{\eta}{r}\right)\rho(\eta)\eta^{N-1}\, d \eta\notag\\
=\, &r^{k+N-1} \int_0^1 \psi_k\left(s\right)\rho(rs)s^{N-1}\, ds
+ r^{k+N-1} \int_1^{R/r} \psi_k\left(s\right)\rho(rs)s^{N-1}\, ds\notag\\
\leq\, &r^{k+N-1} \int_0^1 \psi_k\left(s\right)\rho(rs)s^{N-1}\, ds\notag\\
\lesssim\,
&r^{k+N-1} \left(\int_0^1 \psi_k\left(s\right)s^{N-1}\, ds \right)
+r^{p+k+N-1} \left(\int_0^1 \psi_k\left(s\right)s^{p+N-1}\, ds\right)\notag\\
=\,
& C_1 r^{k+N-1}+C_2r^{p+k+N-1} \, ,\label{dWrho1sub}
\end{align}
where we defined
\begin{align*}
 C_1:=\int_0^1 \psi_k\left(s\right)s^{N-1}\, ds\, ,
 \qquad
 C_2:=\int_0^1 \psi_k\left(s\right)s^{p+N-1}\, ds\, .
\end{align*}
In the case when $r \in [R,\infty)$, we use the fact that $\psi_k(s)>0$ for $s\in(0,1)$ by Lemma \ref{lem:psibehaviour} and so we obtain by the same argument
\begin{align}
\partial_r \left(W_k \ast \rho\right)(r) 
=\, &r^{k-1} \int_0^R \psi_k\left(\frac{\eta}{r}\right)\rho(\eta)\eta^{N-1}\, d \eta
\lesssim\,
r^{k-1} \int_0^R \psi_k\left(\frac{\eta}{r}\right)\left(1+\eta^p\right)\eta^{N-1}\, d \eta\notag\\
=\,
&r^{k+N-1} \left(\int_0^{R/r} \psi_k\left(s\right)s^{N-1}\, ds \right)
+r^{p+k+N-1} \left(\int_0^{R/r} \psi_k\left(s\right)s^{p+N-1}\, ds\right)\notag\\
\leq\,
& C_1 r^{k+N-1}+C_2 r^{p+k+N-1} \, ,\label{dWrho2sub}
\end{align}
with constants $C_1$, $C_2$ as given above. It is easy to see that $C_1$ and $C_2$ are indeed finite. From \eqref{lowerlim1} it follows that $\psi_k\left(s\right)s^{N-1}$ and $\psi_k\left(s\right)s^{p+N-1}$ are integrable at zero since $-N<p$. Similarly, both expressions are integrable at one using \eqref{upperlim1-} in Lemma \ref{Beh1}. Hence, we conclude from \eqref{dWrho1sub} and \eqref{dWrho2sub} that for any $r \in (0,1)$,
\begin{equation*}\label{ind3}
 \partial_r \left(W_k \ast \rho\right)(r) 
\lesssim r^{k+N-1} +r^{p+k+N-1} \, .
\end{equation*}
Substituting into the right-hand side of \eqref{eq:bd2} and using \eqref{constest} yields
\begin{align*}
- \left(W_k \ast \rho\right)(r)
\lesssim \,
1+ \int_r^1 \left(s^{k+N-1}+s^{p+k+N-1}\right)\, ds
\end{align*}
for any $r \in (0,1)$.
It follows that for $p\neq -k-N$,
\begin{align*}
- \left(W_k \ast \rho\right)(r)\lesssim \,
&1+ \frac{1-r^{k+N}}{k+N} + \frac{1-r^{p+k+N}}{p+k+N}
\lesssim\,
1+ r^{p+k+N}\, .
\end{align*}
If $p=-k-N$, we have instead
\begin{align*}
- \left(W_k \ast \rho\right)(r)\lesssim \,
&1+ \frac{1-r^{k+N}}{k+N} -\log(r)
\lesssim\,
1+ |\log(r)|\, .
\end{align*}
If $\rho$ is a global minimiser of $\mF_k$, then it satisfies the Euler-Lagrange condition \eqref{eq:min}. Hence, we obtain \eqref{ind2} with the function $g(p)$ as defined in \eqref{defgp}. If $\rho$ is a global minimiser of the rescaled functional $\mFr$, then it satisfied condition \eqref{eq:min resc} instead, and we arrive at the same result.\\

\textbf{Super-Newtonian Regime} $\mathbf{k>2-N}$\\
In this regime, $\psi_k(s)$ is continuous, positive and strictly decreasing for $s>0$ (see Appendix Lemma \ref{lem:psibehaviour} and Figure \ref{fig:psik_fig4}) and hence integrable at any positive constant. 
Under the induction assumption \eqref{ind1} and using the fact that $\rho$ is compactly supported and radially symmetric non-increasing, we have for any $r \in (0,R)$
\begin{align*}
\partial_r \left(W_k \ast \rho\right)(r) 
=\, &r^{k-1} \int_0^R \psi_k\left(\frac{\eta}{r}\right)\rho(\eta)\eta^{N-1}\, d \eta
=\, r^{N+k-1} \int_0^{R/r} \psi_k\left(s\right)\rho(rs)s^{N-1}\, d \eta\notag\\
\lesssim\,
&r^{k+N-1} \left(\int_0^{R/r} \psi_k\left(s\right)s^{N-1}\, ds \right)
+r^{p+k+N-1} \left(\int_0^{R/r} \psi_k\left(s\right)s^{p+N-1}\, ds\right)\notag\\
=\,
& C_1(r)r^{k+N-1}+C_2(r)r^{p+k+N-1} \, ,
\end{align*}
where we defined
\begin{align*}
 C_1(r):=\int_0^{R/r} \psi_k\left(s\right)s^{N-1}\, ds\, ,
 \qquad
 C_2(r):=\int_0^{R/r} \psi_k\left(s\right)s^{p+N-1}\, ds\, .
\end{align*}
Next, let us verify that $C_1(\cdot)$ and $C_2(\cdot)$ are indeed bounded above. From \eqref{lowerlim1} it follows again that $\psi_k\left(s\right)s^{N-1}$ and $\psi_k\left(s\right)s^{p+N-1}$ are integrable at zero since $-N<p$. In order to deal with the upper limit, we make use of property \eqref{lim0}, which implies that there exist constants $L>1$ and $C_L>0$ such that for all $s\geq L$, we have
$$
\psi_k(s) \leq C_L s^{k-2}\, .
$$
It then follows that for $r<R/L$, 
$$
\int_L^{R/r} \psi_k\left(s\right)s^{N-1}\, d s
\leq \frac{C_L}{N+k-2}\left(\left(\frac{R}{r}\right)^{N+k-2}-L^{N+k-2}\right)\, ,
$$
and hence we obtain
$$
C_1(r)
=\int_0^{L} \psi_k\left(s\right)s^{N-1}\, d s
+\int_L^{R/r} \psi_k\left(s\right)s^{N-1}\, d s
\lesssim 1+r^{-N-k+2}\, .
$$
Similarly,
$$
C_2(r)
=\int_0^{L} \psi_k\left(s\right)s^{p+N-1}\, d s
+\int_L^{R/r} \psi_k\left(s\right)s^{p+N-1}\, d s
\lesssim 1+r^{-p-N-k+2}\, .
$$
We conclude
\begin{align}
\partial_r \left(W_k \ast \rho\right)(r) 
\lesssim
& \left(1+r^{-N-k+2}\right)r^{k+N-1}+\left(1+r^{-p-N-k+2}\right)r^{p+k+N-1}\notag\\
\lesssim
& 1+r^{k+N-1}+r^{p+k+N-1}
\, .\label{dWrho3super}
\end{align}
For $R/L \leq r < R$ on the other hand we can do an even simpler bound:
$$
C_1(r)+C_2(r)
\leq \int_0^{L} \psi_k\left(s\right)s^{N-1}\, ds+\int_0^{L} \psi_k\left(s\right)s^{p+N-1}\, d s\lesssim 1\, ,
$$
and so we can conclude for \eqref{dWrho3super} directly.
In the case when $r \in [R,\infty)$, we obtain by the same argument
\begin{align*}
\partial_r \left(W_k \ast \rho\right)(r) 
\lesssim\,
&r^{k+N-1} \left(\int_0^{R/r} \psi_k\left(s\right)s^{N-1}\, ds \right)
+r^{p+k+N-1} \left(\int_0^{R/r} \psi_k\left(s\right)s^{p+N-1}\, ds\right)\notag\\
\leq\,
& C_1(R) r^{k+N-1}+C_2(R) r^{p+k+N-1} \, ,
\end{align*}
with constants $C_1(\cdot)$, $C_2(\cdot)$ as given above, and so we conclude that the estimate \eqref{dWrho3super} holds true for any $r >0$.
Substituting \eqref{dWrho3super} into \eqref{eq:bd2}, we obtain for $r \in (0,1)$
\begin{align*}
- \left(W_k \ast \rho\right)(r)
\lesssim \,
&1+ \int_r^1 \left(s^{k+N-1}+s^{p+k+N-1}\right)\, ds
\end{align*}
and so we conclude as in the sub-Newtonian regime.\\
\bigskip

 \fbox{Case 2: $k=2-N$, $N\geq 3$}
 \textbf{Newtonian Regime}\\ 
 
 In the Newtonian case, we can make use of known explicit expressions. We write as above
\begin{equation}\label{eq:bd1}
 - \left(W_k \ast \rho\right)(r)= -\left( \frac{r^{2-N}}{(2-N)} \ast \rho\right)(1)+\int_r^1\partial_r \left(\frac{r^{2-N}}{(2-N)} \ast \rho\right)(s) \, ds\, ,
\end{equation}
where $-\left( \tfrac{r^{2-N}}{(2-N)} \ast \rho\right)(1)$ is bounded using \eqref{constest}. To controle $\int_r^1\partial_r \left(\tfrac{r^{2-N}}{(2-N)} \ast \rho\right)(s) \, ds$, we use Newton's Shell Theorem implying
\begin{equation*}
 \partial_r \left(\frac{r^{2-N}}{(2-N)} \ast \rho\right)(s) = \frac{\sigma_N M(s)}{|\partial B(0,s)|}=M(s)s^{1-N}\, ,
\end{equation*}
where we denote by $M(s)=\sigma_N \int_0^s \rho(t)t^{N-1}\, dt$ the mass of $\rho$ in $B(0,s)$. Note that this is precisely expression \eqref{derivpsi} we obtained in the previous case, choosing $\psi_k(s)=1$ for $s<1$ and $\psi_k=0$ for $s>1$ with a jump singularity at $s=1$ (see also \eqref{Newtonianpsi} in the Appendix). By our induction assumption \eqref{ind1}, we have
$$
M(s)\lesssim \sigma_N \int_0^s (1+t^p) t^{N-1}\, dt
= \sigma_N \left(\frac{s^N}{N}+\frac{s^{N+p}}{N+p}\right)\, , \qquad s \in (0,1)\, ,
$$
and hence if $p\neq -2$, then
$$
\int_r^1\partial_r \left(\frac{r^{2-N}}{(2-N)} \ast \rho\right)(s) \, ds
\lesssim  \frac{1}{2N}\left(1-r^2\right) + \frac{1}{(N+p)(p+2)}\left(1-r^{p+2}\right)\, .
$$
If $p=-2$, we obtain instead
$$
\int_r^1\partial_r \left(\frac{r^{2-N}}{(2-N)} \ast \rho\right)(s) \, ds
\lesssim \frac{1}{2N}\left(1-r^2\right) - \frac{1}{(N+p)}\log(r)\, .
$$
Substituting into the right-hand side of \eqref{eq:bd1} yields for all $ r \in (0,1)$
$$
-\left(W_k \ast \rho\right)(r) \lesssim 
\begin{cases}
 1+r^{p+2} &\text{if} \, \, p \neq -2\, ,\\
 1+\left|\log(r)\right| &\text{if} \, \, p = -2\, .\\
\end{cases}
$$
Thanks again to the Euler-Lagrange condition \eqref{eq:min} if $\rho$ is a global minimiser of $\mF_k$, or thanks to condition \eqref{eq:min resc} if $\rho$ is a global minimiser of $\mFr$ instead, we arrive in both cases at \eqref{ind2}.\\

\fbox{Case 3: $-1<k<0$, $N=1$}\\

In one dimension, we can calculate everything explicitly. Since the mean-field potential gradient is a singular integral, we have
\begin{align*}
 \partial_x S_k(x) 
= &\int_\RR \frac{x-y}{|x-y|^{2-k}}\left(\rho(y)-\rho(x)\right)\, dy\\
= &\lim_{\delta \to 0} \int_{|x-y|>\delta} \frac{x-y}{|x-y|^{2-k}}\rho(y)\, dy
=  \frac{x}{r}  \partial_r S_k(r) 
\end{align*}
with the radial component for $r \in (0,R)$ given by 
\begin{align*}
 \partial_r S_k(r) 
 = &\int_0^\infty \left(\frac{r-\eta}{|r-\eta|^{2-k}} + \frac{r+\eta}{|r+\eta|^{2-k}}\right) \left(\rho(\eta)-\rho(r)\right)\,d\eta\\
 = &\int_0^\infty  \frac{r+\eta}{|r+\eta|^{2-k}}\rho(\eta)\,d\eta
 + \lim_{\delta \to 0}\int_{|r-\eta|>\delta}\frac{r-\eta}{|r-\eta|^{2-k}}\rho(\eta)\,d\eta\\
 = &r^{k-1}\int_0^\infty \psi_1\left(\frac{\eta}{r}\right)\rho(\eta)\,d\eta
 + r^{k-1}\lim_{\delta \to 0}\int_{|r-\eta|>\delta}\psi_2\left(\frac{\eta}{r}\right)\rho(\eta)\,d\eta\\
 =&r^{k-1}\int_0^R \psi_1\left(\frac{\eta}{r}\right)\rho(\eta)\,d\eta
 + r^{k-1}\lim_{\delta \to 0}\left(\int_0^{r-\delta}+\int_{r+\delta}^R \right)\psi_2\left(\frac{\eta}{r}\right)\rho(\eta)\,d\eta
\end{align*}
where
$$
\psi_1(s):=\frac{1+s}{|1+s|^{2-k}}=(1+s)^{k-1}, \qquad
\psi_2(s):=\frac{1-s}{|1-s|^{2-k}}=
\begin{cases}
 (1-s)^{k-1} &\text{if}\, \, 0\leq s<1\, ,\\
 -(s-1)^{k-1} &\text{if}\, \, s>1
\end{cases}
$$
are well defined on $[0,1)\cap (1,\infty)$. Define $\gamma:= \min\{1,R/2\}$. Since $\psi_2(s)<0$ for $s>1$ and $\rho$ radially symmetric decreasing, we can estimate the last term for any $r \in (0,\gamma)$ and small $\delta>0$ by
\begin{align*}
  &r^{k-1}\left(\int_0^{r-\delta}+\int_{r+\delta}^R \right)\psi_2\left(\frac{\eta}{r}\right)\rho(\eta)\,d\eta\\
 &\qquad\leq
  r^{k-1}\int_0^{r-\delta}\psi_2\left(\frac{\eta}{r}\right)\rho(\eta)\,d\eta
  +r^{k-1}\rho(2r)\int_{r+\delta}^{2r}\psi_2\left(\frac{\eta}{r}\right)\,d\eta\\
  &\qquad=r^{k}\int_0^{1-\delta/r}\psi_2\left(s\right)\rho(rs)\,ds
  +r^{k}\rho(2r)\int_{1+\delta/r}^{2}\psi_2\left(s\right)\,ds\, .
\end{align*}
Under assumption \eqref{ind1}, we can bound the above expression by
\begin{align*}
 \partial_r S_k(r) 
 &\lesssim\,
 r^{k} \int_0^{R/r}\psi_1(s)\, ds + r^{k+p} \int_0^{R/r}\psi_1(s)s^p\, ds\\
 &+ r^{k}\lim_{\delta \to 0}\int_0^{1-\delta/r} \psi_2(s)\, ds+ r^{k+p}\lim_{\delta \to 0}\int_0^{1-\delta/r} \psi_2(s)s^p\, ds\\
 &+r^{k}\left(1+r^p\right)\lim_{\delta \to 0}\int_{1+\delta/r}^{2}\psi_2\left(s\right)\,ds\\
 &:= r^k \lim_{\delta \to 0} C_1(r,\delta) + r^{k+p}\lim_{\delta \to 0}C_2(r,\delta)\, ,
\end{align*}
where we defined
\begin{align*}
 &C_1(r,\delta):=\int_0^{R/r}\psi_1(s)\, ds +\int_0^{1-\delta/r} \psi_2(s)\, ds+\int_{1+\delta/r}^{2}\psi_2\left(s\right)\,ds\, ,\\
 &C_2(r,\delta):=\int_0^{R/r}\psi_1(s)s^p\, ds +\int_0^{1-\delta/r} \psi_2(s)s^p\, ds+\int_{1+\delta/r}^{2}\psi_2\left(s\right)\,ds\, .
\end{align*}
Next, let us show that the functions $\lim_{\delta \to 0} C_1(r,\delta)$ and $\lim_{\delta \to 0}C_2(r,\delta)$ can be controlled in terms of $r$. The function $\psi_2$ has a non-integrable singularity at $s=1$, however, we can seek compensations from below and above the singularity. One can compute directly that 
\begin{align*}
 C_1(r,\delta)
 :=&\frac{1}{k}\left[\left(\frac{R}{r}+1\right)^k-1\right]
 +\frac{1}{k}\left[1-\left(\frac{\delta}{r}\right)^k\right]
 +\frac{1}{k}\left[\left(\frac{\delta}{r}\right)^k-1\right]\\
 = &\frac{1}{k}\left(\left(\frac{R}{r}+1\right)^k-1\right)
 \leq -\frac{1}{k}\, ,\\
  C_2(r,\delta)
 :=&\left[\left(\frac{R}{r}+1\right)^{k-1}\left(\frac{R}{r}\right)^{p+1}\right]
 -\frac{1}{k}\left[\left(\frac{\delta}{r}\right)^k\right]
 +\frac{1}{k}\left[\left(\frac{\delta}{r}\right)^k-1\right]\\
 = &\left(\frac{R}{r}+1\right)^{k-1}\left(\frac{R}{r}\right)^{p+1} -\frac{1}{k}
 \leq \left(\frac{R}{r}\right)^{k-1}\left(\frac{R}{r}\right)^{p+1} -\frac{1}{k}
 =R^{k+p}r^{-k-p} -\frac{1}{k}\, ,
\end{align*}
so that we obtain the estimate
\begin{align*}
 \partial_r S_k(r) 
 &\lesssim\,1+r^k+r^{k+p}\, .
\end{align*}
Finally, we have for all $r \in (0,\gamma)$:
\begin{align*}
- \left(W_k \ast \rho\right)(r)=&-\left(W_k \ast \rho\right)(\gamma) + \int_r^\gamma \partial_r S_k(s)\, ds
\lesssim \,
1+ \int_r^\gamma \left(s^{k}+s^{p+k}\right)\, ds\, ,
\end{align*}
where we made again use of estimate \eqref{constest}. If $p\neq -k-1$, we have
\begin{align*}
- \left(W_k \ast \rho\right)(r)\lesssim \,
&1+ \frac{\gamma^{k+1}-r^{k+1}}{k+1} + \frac{\gamma^{p+k+1}-r^{p+k+1}}{p+k+1}
\lesssim\,
1+ r^{p+k+1}\, .
\end{align*}
If $p=-k-1$ however, we obtain
\begin{align*}
- \left(W_k \ast \rho\right)(r)\lesssim \,
&1+ \frac{\gamma^{k+1}-r^{k+1}}{k+1} +\log(\gamma)-\log(r)
\lesssim\,
1+|\log(r)|\, .
\end{align*}
Using again the Euler-Lagrange condition \eqref{eq:min} for a global minimiser of $\mF_k$ or \eqref{eq:min resc} for a global minimiser of $\mFr$ respectively, we obtain \eqref{ind2} in the one dimensional case.\\

\fbox{Case 4: $-N<k\leq1-N$, $N\geq 2$}\\

In this case, we can again use hypogeometric functions, but here the mean-field potential gradient is a singular integral due to the singularity properties of hypogeometric functions. It writes as
\begin{align*}
 \nabla S_k (x) 
= \lim_{\delta \to 0} \int_{|x-y|>\delta} \frac{x-y}{|x-y|^{2-k}}\rho(y)\, dy
=  \frac{x}{r}  \partial_r S_k(r) 
\end{align*}
with the radial component given by
\begin{align*}
 \partial_r S_k(r) 
 =&r^{k-1} \lim_{\delta \to 0}\int_{|r-\eta|>\delta}\psi_k\left(\frac{\eta}{r}\right)\rho(\eta)\eta^{N-1}\,d\eta\notag\\
 = &r^{k-1}\lim_{\delta \to 0}\left(\int_0^{r-\delta}+\int_{r+\delta}^R \right)\psi_k\left(\frac{\eta}{r}\right)\rho(\eta)\eta^{N-1}\,ds\, ,
\end{align*}
where $\psi_k$ is given by \eqref{psi2} on $[0,1)\cap (1,\infty)$, and we used the fact that $\rho$ is compactly supported. In this regime, the singularity at $s=1$ is non-integrable and has to be handled with care. Define $\gamma:= \min\{1,R/2\}$. Since $\psi_2(s)<0$ for $s>1$ (see Appendix Lemma \ref{lem:psibehaviour}) and since $\rho$ is radially symmetric non-increasing, we can estimate the second integral above for any $r \in (0,\gamma)$ and small $\delta>0$ by
\begin{align*}
  r^{k-1}\int_{r+\delta}^R \psi_k\left(\frac{\eta}{r}\right)\rho(\eta)\eta^{N-1}\,d\eta
 \leq
  &r^{k-1}\rho(2r)\int_{r+\delta}^{2r}\psi_k\left(\frac{\eta}{r}\right)\eta^{N-1}\,d\eta\\
 =&r^{N+k-1}\rho(2r)\int_{1+\delta/r}^{2}\psi_k\left(s\right)s^{N-1}\,ds\, .
\end{align*}
Under assumption \eqref{ind1}, we can then bound the above expression by
\begin{align*}
 \partial_r \left(W_k\ast \rho\right)(r) 
 \lesssim\, 
&r^{N+k-1}\left(\lim_{\delta \to 0}\int_0^{1-\delta/r} \psi_k(s)s^{N-1}\, ds \right)\\
&+ r^{p+N+k-1}\left(\lim_{\delta \to 0}\int_0^{1-\delta/r} \psi_k(s)s^{p+N-1}\, ds \right)\\
 &+ r^{N+k-1}\left(1+r^p\right) \lim_{\delta \to 0}\int_{1+\delta/r}^{2} \psi_k\left(s\right)s^{N-1}\,ds\\
= &r^{N+k-1}\lim_{\delta \to 0}C_1(r,\delta) +  r^{p+N+k-1}\lim_{\delta \to 0}C_2(r,\delta)\, ,
\end{align*}
where
\begin{align*}
 &C_1(r,\delta)=\int_0^{1-\delta/r} \psi_k(s)s^{N-1}\, ds
 + \int_{1+\delta/r}^{2} \psi_k\left(s\right)s^{N-1}\,ds\, ,\\
 &C_2(r,\delta)=\int_0^{1-\delta/r} \psi_k(s)s^{p+N-1}\, ds
 +\int_{1+\delta/r}^{2} \psi_k\left(s\right)s^{N-1}\,ds\, .
\end{align*}
The crucial step is again to show that $\lim_{\delta \to 0}C_1(r,\delta)$ and $\lim_{\delta \to 0}C_2(r,\delta)$ are well-defined and can be controlled in terms of $r$, seeking compensations from above and below the singularity at $s=1$. 
Recalling that $\psi_k(s)s^{N-1}$ and $\psi_k(s)s^{p+N-1}$ are integrable at zero by Lemma \ref{Beh0} and at any finite value above $s=1$ by continuity, we see that the lower bound $0$ and upper bound $2$ in the integrals only contribute constants, independent of $r$ and $\delta$. The essential step is therefore to check integrability close to the singularity $s=1$. From \eqref{upperlim1-} and \eqref{upperlim1+} in Lemma \ref{Beh1}(2) in the Appendix, we have for any $\alpha \in \RR$ and $s$ close to 1:
 \begin{align*}
 &s<1:
 \qquad 
 \psi_k\left(s\right)s^\alpha
 =K_1 \left(1-s\right)^{N+k-2} +O\left( \left(1-s\right)^{N+k-1}\right)\, ,\\
 &s>1:
 \qquad 
 \psi_k\left(s\right)s^\alpha
 = -K_1 \left(s-1\right)^{N+k-2} +O\left( \left(s-1\right)^{N+k-1}\right)\, ,
\end{align*}
where the constant $K_1$ is given by \eqref{K1K2}--\eqref{gamma}. Hence, for $-N<k<1-N$ we obtain
\begin{align*}
 C_1(r,\delta)
 &\lesssim 1 -\frac{K_1}{N+k-1} \left(\frac{\delta}{r}\right)^{N+k-1} 
 +\frac{K_1}{N+k-1} \left(\frac{\delta}{r}\right)^{N+k-1} 
 +O\left(\left(\frac{\delta}{r}\right)^{N+k}\right)\\
 &=1  +O\left(\left(\frac{\delta}{r}\right)^{N+k}\right)
\end{align*}
with exactly the same estimate for $C_2(r,\delta)$. Taking the limit $\delta \to 0$, we see that both terms are bounded by a constant. For $k=1-N$, we obtain similarly that both $ C_1(r,\delta)$ and $ C_2(r,\delta)$ are bounded by 
\begin{align*}
1 - K_1 \log\left(\frac{\delta}{r}\right)+K_1\log\left(\frac{\delta}{r}\right)
 +O\left(\left(\frac{\delta}{r}\right)\right)
=1  +O\left(\left(\frac{\delta}{r}\right)\right)
\end{align*}
multiplied by some constant.
In other words, for any $r \in (0,\gamma)$ and $-N<k\leq 1-N$ we have
\begin{align*}
 \partial_r \left(W_k\ast \rho\right)(r) 
 \lesssim\, r^{N+k-1} + r^{p+N+k-1}\, .
\end{align*}
Now, we are ready to estimate the behaviour of $\rho$ around the origin using again the Euler-Lagrange condition. To estimate the mean-field potential, we use again \eqref{constest} and write
\begin{align*}
- \left(W_k \ast \rho\right)(r)=&-\left(W_k \ast \rho\right)(\gamma) + \int_r^\gamma \partial_r\left(W_k\ast \rho\right)(s)\, ds
\lesssim \,
1+ \int_r^\gamma \left(s^{k+N-1}+s^{p+k+N-1}\right)\, ds
\end{align*}
for any $r \in (0,\gamma)$.
It follows that for $p\neq -k-N$,
\begin{align*}
- \left(W_k \ast \rho\right)(r)\lesssim \,
&1+ \frac{\gamma^{k+N}-r^{k+N}}{k+N} + \frac{\gamma^{p+k+N}-r^{p+k+N}}{p+k+N}
\lesssim\,
1+ r^{p+k+N}\, .
\end{align*}
If $p=-k-N$, we have instead
\begin{align*}
- \left(W_k \ast \rho\right)(r)\lesssim \,
&1+ \frac{\gamma^{k+N}-r^{k+N}}{k+N} +\log(\gamma)-\log(r)
\lesssim\,
1+ |\log(r)|\, .
\end{align*}
This concludes the proof of Lemma \ref{lem:induction} using again Euler-Lagrange condition \eqref{eq:min} if $\rho$ is a minimiser of $\mF_k$, or condition \eqref{eq:min resc} if $\rho$ is a minimiser of $\mFr$, to obtain \eqref{ind2}.
\end{proof}

\begin{corollary}[Boundedness]\label{cor:boundedness_min}
If $\chi=\chi_c$ and $\rho $ is a global minimiser of $\mF_k$ over $\mY$, then $\rho \in L^\infty\left(\RR^N\right)$. If $0<\chi<\chi_c$ and $\rho $ is a global minimiser of $\mFr$ over $\mY_2$, then $\rho \in L^\infty\left(\RR^N\right)$.
\end{corollary}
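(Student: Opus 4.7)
My plan is to bootstrap the pointwise estimate of Lemma \ref{lem:induction} from a trivial initial bound. By Proposition \ref{prop:charac min}, both a global minimiser $\rho \in \mY$ of $\mF_k$ in the critical case and a global minimiser $\rho \in \mY_2$ of $\mFr$ in the sub-critical case are radially symmetric non-increasing; by Corollary \ref{cor:compactsupp_min} they are also compactly supported. Since $\rho \in L^m(\RR^N)$, radial monotonicity gives, for every $r > 0$,
\begin{equation*}
||\rho||_m^m \;\geq\; \sigma_N \int_0^r \rho(s)^m s^{N-1}\, ds \;\geq\; \frac{\sigma_N}{N}\, r^N\, \rho(r)^m\, ,
\end{equation*}
and hence $\rho(r) \lesssim r^{-N/m}$. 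As $m \in (1,2)$, the exponent $p_0 := -N/m$ lies in $(-N,0)$, so hypothesis \eqref{ind1} of Lemma \ref{lem:induction} is satisfied.

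I would then iterate the lemma, setting $p_{n+1} := g(p_n) = (p_n + N + k)/(m-1)$. The fair-competition identity $N(m-1) + k = 0$ gives $N+k = N(2-m)$, so the unique fixed point of $g$ is $p^* = (N+k)/(m-2) = -N$. The map $g$ is affine with slope $1/(m-1) > 1$, hence
\begin{equation*}
p_n - p^* \;=\; \frac{p_0 - p^*}{(m-1)^n} \;=\; \frac{N(m-1)/m}{(m-1)^n}\, ,
\end{equation*}
so that $p_n$ is strictly increasing and diverges to $+\infty$. In particular there exists a smallest $n_0$ for which $p_{n_0} \geq 0$, and then Lemma \ref{lem:induction} yields $\rho(r) \lesssim 1 + r^{p_{n_0}} \lesssim 1$ uniformly on $(0,1)$.

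The only mildly delicate point I anticipate is the exceptional value $p = -N-k$, at which Lemma \ref{lem:induction} returns the logarithmic bound $\rho(r) \lesssim 1 + |\log r|^{1/(m-1)}$ instead of a power. If some iterate $p_n$ happens to equal $-N-k$, I would exploit the elementary inequality $|\log r|^{1/(m-1)} \leq C_\varepsilon\, r^{-\varepsilon}$ on $(0,1)$ to re-enter the iteration with $p = -\varepsilon$ for a generic small $\varepsilon > 0$ (chosen so as to avoid further resonances); a single further application of Lemma \ref{lem:induction} then gives $\rho(r) \lesssim 1 + r^{(N(2-m) - \varepsilon)/(m-1)}$, which is uniformly bounded on $(0,1)$ as soon as $\varepsilon < N(2-m)$.

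Finally, combining the uniform bound near the origin with the trivial estimate $\rho(r) \leq \rho(1)$ for $r \geq 1$ (from radial monotonicity and compact support) yields $\rho \in L^\infty(\RR^N)$. The argument for the sub-critical case is identical, invoking the rescaled Euler--Lagrange relation \eqref{eq:min resc} inside Lemma \ref{lem:induction} rather than \eqref{eq:min}. The main obstacle, if any, is only the bookkeeping around the logarithmic exception, which is a technical nuisance rather than a conceptual difficulty; the heart of the argument is the expansivity $g'(p) = 1/(m-1) > 1$ of the iteration, which is itself a direct consequence of the fair-competition relation.
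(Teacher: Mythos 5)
Your proposal is correct and is essentially the paper's own argument: the same initial bound $\rho(r)\lesssim r^{-N/m}$ coming from the $L^m$ estimate $\rho(r)^m|B(0,r)|\leq ||\rho||_m^m$, the same iteration of Lemma \ref{lem:induction} driven by the expansive affine map $g$ (fixed point $-N$, slope $1/(m-1)>1$), and the same conclusion via radial monotonicity away from the origin, with the rescaled Euler--Lagrange relation \eqref{eq:min resc} replacing \eqref{eq:min} in the sub-critical case. The only deviation is cosmetic: you dispatch the logarithmic exception $p=-N-k$ via $|\log r|^{1/(m-1)}\leq C_\eps\, r^{-\eps}$ with $\eps<N+k$ and one further application of the lemma, whereas the paper bounds it by $1+r^{-1}$ and re-enters with $p=-1$; your variant is in fact slightly more careful, since $p=-1$ coincides with the excluded endpoint $-N$ of hypothesis \eqref{ind1} when $N=1$.
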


\begin{proof}
Let $\rho$ be a global minimiser of either $\mF_k$ over $\mY$, or of $\mFr$ over $\mY_2$.
Since $\rho$ is radially symmetric non-increasing by Proposition \ref{prop:charac min}, it is enough to show that $\rho(0)<\infty$. Following the argument in \cite{CHVY}, we use induction to show that there exists some $\alpha>0$ such that for all $r \in (0,1)$ we have
\begin{equation}\label{ind4}
\rho(r) \lesssim 1+r^\alpha\, .
\end{equation}
Note that $g(p)$ as defined in \eqref{defgp} is a linear function of $p$ with positive slope, and let us denote $g^{(n)}(p)=\left(g \circ g \dotsm \circ g\right)(p)$. Computing explicitly, we have for all $n \in \N$
$$
g^{(n)}(p)= \frac{N+k}{m-2} + \frac{p(m-2)-N-k}{(m-2)(m-1)^n}
=-N+\frac{p+N}{(m-1)^n}\, ,
$$
so that 
$$
\lim_{n \to \infty} g^{(n)}(p) = + \infty \quad \text{for any} \, \, p>-N\, .
$$
Since $\rho(r)^m |B(0,r)| \leq ||\rho||_m^m<\infty$ we obtain the estimate
$$
\rho(r)\leq C(N,m,||\rho||_m)r^{-N/m} \quad \text{for all}\, \, r>0\,.
$$
It follows that $\rho$ satisfies the induction requirement \eqref{ind1} with choice $p_0:=-N/m$. Since $p_0>-N$ there exists $n_0 \in \N$ such that $g^{(n_0)}(p_0)>0$ and so we can apply Lemma \ref{lem:induction} $n_0$ times.
This concludes the proof with $\alpha=g^{(n_0)}(p_0)$. We point out that $p_0<-N-k$ and so there is a possibility that $g^{(n)}(p_0)=-N-k$ might occur for some $0<n\leq n_0$: if this happens, the logarithmic case occurs and by the second bound in \eqref{ind2}, we obtain
$$
\rho(r) \lesssim 1+|\log(r)|^{\frac{1}{m-1}} \leq 1+ r^{-1}\, ,
$$
hence applying the first bound in \eqref{ind2} for $p=-1$ yields \eqref{ind4} with $\alpha=1/(m-1)$.
\end{proof}

\begin{corollary}[Regularity]\label{cor:regmin}
 If $\chi=\chi_c$, then all global minimisers $\rho\in\mY$ of $\mF_k$ satisfy $S_k \in \mathcal{W}^{1,\infty}\left(\RR^N\right)$ and $\rho^{m-1} \in \mathcal{W}^{1,\infty}\left(\RR^N\right)$. If $0<\chi<\chi_c$, then all minimisers  $\rho\in\mY_2$ of $\mFr$ satisfy $S_k \in \mathcal{W}^{1,\infty}\left(\RR^N\right)$ and $\rho^{m-1} \in \mathcal{W}^{1,\infty}\left(\RR^N\right)$. In the singular range $-N<k\leq 1-N$, we further obtain $\rho \in C^{0,\alpha}\left(\RR^N\right)$ with $\alpha \in (1-k-N,1)$ in both original and rescaled variables.
 \end{corollary}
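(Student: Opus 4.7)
The strategy lifts the known $L^\infty$ bound and compact support of any global minimiser $\rho$ (Corollaries~\ref{cor:compactsupp_min} and~\ref{cor:boundedness_min}) to full Lipschitz/Sobolev regularity using the Euler--Lagrange characterisation of Proposition~\ref{prop:charac min} together with the convolution estimates of Lemma~\ref{lem:regS}. Since $\rho\in L^1_+\cap L^\infty(\RR^N)$ has compact support (so $|x|^k\rho\in L^1$ is automatic in the sign range $k<0$), Lemma~\ref{lem:regS}(i) yields $S_k\in L^\infty(\RR^N)$; the genuine task is to upgrade this to $\nabla S_k\in L^\infty(\RR^N)$. Once that is in hand, the Euler--Lagrange equation \eqref{eq:min} or \eqref{eq:min resc} presents $\rho^{m-1}$ as the positive part of a function whose gradient is bounded on the bounded set where the argument of the positive part is non-negative (in the rescaled case the extra $-|x|^2/2$ is smooth and drives the argument to $-\infty$ at infinity, so $\rho^{m-1}$ has compact support). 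Hence $\rho^{m-1}\in\mathcal{W}^{1,\infty}(\RR^N)$ follows from the fact that the positive part is $1$-Lipschitz.

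For the non-singular gradient range $1-N<k<0$, Lemma~\ref{lem:regS}(ii) applies directly to $\rho\in L^\infty$ and gives $\nabla S_k\in L^\infty$. The substantive case is the Cauchy principal value range $-N<k\leq 1-N$, where Lemma~\ref{lem:regS}(ii) requires a prior Hölder estimate $\rho\in C^{0,\alpha}$ with $\alpha>1-k-N$. I would produce this by a Riesz-potential bootstrap. A standard near/far-field splitting of $|S_k(x)-S_k(y)|$ using only $\rho\in L^\infty$ with compact support yields $S_k\in C^{0,N+k}$ for $-N<k<1-N$, with an $h|\log h|$ modulus when $k=1-N$ (more than enough there since $1-k-N=0$). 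Transferring this through \eqref{eq:min}/\eqref{eq:min resc} and inverting $t\mapsto t^{m-1}$, which has a Lipschitz inverse on any bounded interval because $(t^{1/(m-1)})'=\tfrac{1}{m-1}t^{(2-m)/(m-1)}$ is bounded on $[0,\|\rho^{m-1}\|_\infty]$ when $m\in(1,2)$, produces $\rho\in C^{0,N+k}$. If $N+k>1-k-N$ (i.e.\ $k>\tfrac12-N$) the threshold is already met. Otherwise, the classical Hölder-to-Hölder Riesz potential estimate --- $\rho\in C^{0,\beta}$ compactly supported implies $S_k\in C^{0,\min(N+k+\beta,\,1^-)}$ --- is iterated, each step adding the fixed positive amount $N+k$ to the Hölder exponent. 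Finitely many applications deliver $\rho\in C^{0,\alpha}$ with $\alpha$ arbitrarily close to $1$, in particular with $\alpha\in(1-k-N,1)$.

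The main obstacle is exactly this bootstrap in the deepest singular range: the one-shot Riesz estimate undershoots the threshold $1-k-N$, and one has to verify that neither the positive-part operation nor the root $t\mapsto t^{1/(m-1)}$ causes any loss in Hölder exponent as the iteration proceeds. The former is $1$-Lipschitz and the latter is $C^1$ on any bounded interval since $(2-m)/(m-1)>0$, so both are harmless. Once the bootstrap terminates, Lemma~\ref{lem:regS}(ii) yields $\nabla S_k\in L^\infty(\RR^N)$, the Euler--Lagrange identity closes $\rho^{m-1}\in\mathcal{W}^{1,\infty}(\RR^N)$ as described above, and the Hölder regularity $\rho\in C^{0,\alpha}$ for the singular range falls out as a by-product of the iteration, uniformly in the critical and sub-critical cases.
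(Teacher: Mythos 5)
Your proof is correct, and its skeleton --- bootstrap the H\"older regularity of $\rho$ through the Euler--Lagrange conditions \eqref{eq:min}/\eqref{eq:min resc} until the threshold $\alpha>1-k-N$ of Lemma \ref{lem:regS}(ii) is reached, then read off $\nabla S_k\in L^\infty\left(\RR^N\right)$ and close $\rho^{m-1}\in\mathcal{W}^{1,\infty}\left(\RR^N\right)$ via the $1$-Lipschitz positive part and the Lipschitz power $t\mapsto t^{1/(m-1)}$ on bounded intervals --- coincides with the paper's. Where you genuinely differ is the initialisation of the bootstrap: the paper writes $\rho=(-\Delta)^s S_k$ with $2s=N+k$, invokes the HLS inequality for Riesz kernels \cite[Chapter V]{Stein} to get $S_k\in\mathcal{W}^{2s,p}\left(\RR^N\right)$, and then uses Sobolev embedding to obtain the starting exponent $\beta=2s-N/p$; you instead use the elementary potential-theoretic estimate that $\rho\in L^\infty$ with compact support (Corollaries \ref{cor:compactsupp_min} and \ref{cor:boundedness_min}) gives $S_k\in C^{0,N+k}$ directly, with the $h|\log h|$ modulus at $k=1-N$ where the threshold $1-k-N=0$ makes any positive exponent sufficient. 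Your route avoids fractional Sobolev spaces entirely and even yields a marginally better first exponent ($N+k$ rather than $2s-N/p$), while your iteration step --- the ``classical H\"older-to-H\"older Riesz estimate'' gaining $N+k$ per application, capped below $1$ --- is exactly the fact the paper imports from \cite[Proposition 2.8]{Silvestre}; capping at exponents strictly below $1$ is harmless since the target $1-k-N<1$ strictly because $k>-N$. Your observation that the quadratic confinement in the rescaled case is only locally Lipschitz but is neutralised by compact support is the same device the paper uses implicitly, and your check that neither $(\cdot)_+$ nor $t\mapsto t^{1/(m-1)}$ (whose derivative $\tfrac{1}{m-1}t^{(2-m)/(m-1)}$ is bounded on bounded intervals precisely because $m\in(1,2)$) degrades the H\"older exponent is the key point the paper also relies on; both write-ups are sound, yours being the more self-contained, the paper's generalising more readily should one want regularity beyond $C^{0,\alpha}$ via the fractional Sobolev scale.
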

\begin{proof}
Let $\rho$ be a global minimiser either of $\mF_k$ over $\mY$, or of $\mFr$ over $\mY_2$. Then $\rho \in L^\infty\left(\RR^N\right)$ by Corollary \ref{cor:boundedness_min}. 
Let us start by considering the singular regime $-N<k\leq 1-N$, $N\geq 2$ or $-1<k<0$ for $N=1$. Since $\rho \in L^1\left(\RR^N\right)\cap L^\infty\left(\RR^N\right)$, we have $\rho \in L^p\left(\RR^N\right)$ for any $1< p<\infty$.

Using the fact that $\rho=\left(-\Delta\right)^s S_k$ with fractional exponent $s=(N+k)/2 \in (0,1/2)$, we gain $2s$ derivatives implying $S_k \in \mathcal{W}^{2s,p}\left(\RR^N\right)$ for $p\ge 2$ if $N\geq 2$ or for $p>-1/k$ if $N=1$ by using the HLS inequality for Riesz kernels, see \cite[Chapter V]{Stein}. By classical Sobolev embedding, $\mathcal{W}^{2s,p}\left(\RR^N\right) \subset C^{0,\beta}\left(\RR^N\right)$ with $\beta = 2s-N/p$ for $p>\tfrac{N}{2s}\geq 2$ if $N\geq2$ or for $p>\max\{\tfrac{1}{k+1},-\tfrac{1}{k}\}$ if $N=1$, which yields $\rho \in C^{0,\beta}\left(\RR^N\right)$. In the case $1/2-N<k\leq 1-N$, we can ensure $\beta>1-k-N$ for large enough $p$, obtaining the required H\"{o}lder regularity. For $k\leq 1/2-N$ on the other hand, we need to bootstrap a bit further. Let us fix $n \in \N$, $n\geq 2$ such that 
 $$
 \frac{1}{n+1}-N<k\leq \frac{1}{n}-N
 $$
 and let us define $\beta_n:=\beta+(n-1)2s=n2s-N/p$. Note that $S_k \in L^\infty\left(\RR^N\right)$ by Lemma \ref{lem:regS}, and $\beta_{n-1}+2s<1$. This allows us to repeatedly apply  \cite[Proposition 2.8]{Silvestre} stating that $\rho \in C^{0,\gamma}\left(\RR^N\right)$ implies $S_k \in C^{0,\gamma+2s}\left(\RR^N\right)$ for any $\gamma \in (0,1]$ such that $\gamma+2s<1$. It then follows that $\rho^{m-1} \in C^{0,\gamma+2s}\left(\RR^N\right)$ using the Euler-Lagrange conditions \eqref{eq:min} and \eqref{eq:min resc} respectively and Corollary \ref{cor:compactsupp_min}. Since $m \in (1,2)$, we conclude $\rho \in C^{0,\gamma+2s}\left(\RR^N\right)$. Iterating this argument $(n-1)$ times starting with $\gamma=\beta$, we obtain $\rho \in C^{0,\beta_n}\left(\RR^N\right)$ and choosing $p$ large enough, we have indeed
 $$
 \beta_n>1-k-N\, .
 $$
For any $-N<k<0$, we then have $S_k \in \mathcal{W}^{1,\infty}\left(\RR^N\right)$ by Lemma \ref{lem:regS}. It also immediately follows that $\rho^{m-1} \in \mathcal{W}^{1,\infty}\left(\RR^N\right)$ using the Euler-Lagrange conditions \eqref{eq:min} and \eqref{eq:min resc} respectively, Corollary \ref{cor:compactsupp_min} and Lemma \ref{lem:regS}. Since $m\in (1,2)$, we also conclude $\rho \in \mathcal{W}^{1,\infty}\left(\RR^N\right)$. 
\end{proof}

\begin{remark}
For proving sufficient H\"{o}lder regularity in the singular regime $-N<k\leq 1-N$, one may choose to bootstrap on the fractional Sobolev space $\mathcal{W}^{2s,p}\left(\RR^N\right)$ directly, making use of the Euler-Lagrange conditions \eqref{eq:min} and \eqref{eq:min resc} respectively to show that $\rho \in \mathcal{W}^{r,p}\left(\RR^N\right) \Rightarrow S_k \in  \mathcal{W}^{r+2s,p}\left(\RR^N\right)$ with $r>0$ for $p$ large enough depending only on $N$, see \cite[Chapter V]{Stein}. Here, we need that $ \mathcal{W}^{r,p}\left(\RR^N\right)$ is preserved under taking positive parts of a function for $0<r\leq 1$ and compositions with Lipschitz functions since we take the $1/(m-1)$ power of $\rho$, see \cite[Section 3.1]{Sickel}.
\end{remark}

\begin{theorem}[Global Minimisers as Stationary States]\label{thm:minsstates}
 If $\chi=\chi_c$, then all global minimisers of $\mF_k$ are stationary states of equation \eqref{eq:KS}. If $0<\chi<\chi_c$, then all global minimisers of $\mFr$ are stationary states of the rescaled equation \eqref{eq:KSresc}.
\end{theorem}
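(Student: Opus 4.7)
The plan is to combine the algebraic characterisation of global minimisers obtained in Proposition \ref{prop:charac min} with the regularity results from Corollaries \ref{cor:compactsupp_min}, \ref{cor:boundedness_min} and \ref{cor:regmin} in order to upgrade the Euler--Lagrange identity into the distributional PDE required by Definition \ref{def:sstates} (respectively Definition \ref{def:sstates resc}). Let $\rho$ be a global minimiser of $\mF_k$ in the critical case. The preceding results guarantee that $\rho$ is radially symmetric non-increasing, compactly supported, bounded, lies in $\mathcal{W}^{1,\infty}(\RR^N)$ (and is H\"older continuous with the required exponent when $-N<k\leq 1-N$), and satisfies $\rho^{m-1},\, S_k\in\mathcal{W}^{1,\infty}(\RR^N)$; together with Lemma \ref{lem:regS} this already verifies every regularity demand in Definition \ref{def:sstates}, so only the differential identity remains to be established.

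Write $\Omega:=\{\rho>0\}$ for the interior of the support. On $\Omega$ the Euler--Lagrange relation \eqref{eq:min} collapses to
\begin{equation*}
\frac{m}{N(m-1)}\rho^{m-1}(x) + 2\chi\, W_k\ast\rho(x) = D_k[\rho],
\end{equation*}
and taking the distributional gradient yields $\frac{m}{N(m-1)}\nabla\rho^{m-1}=-2\chi\,\nabla S_k$ on $\Omega$. Multiplying by $\rho$ and invoking the chain-rule identity $\rho\,\nabla\rho^{m-1}=\tfrac{m-1}{m}\nabla\rho^m$, which is legitimate almost everywhere thanks to the Lipschitz regularity of $\rho$, gives $\frac{1}{N}\nabla\rho^m=-2\chi\,\rho\,\nabla S_k$ pointwise a.e. in $\Omega$. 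Off the support both sides vanish identically since $\rho\equiv 0$ there.

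The main remaining point is to verify that no spurious boundary mass is produced along $\partial\Omega$ when piecing the two regions together in the sense of distributions. This is where the sharp regularity of Corollary \ref{cor:regmin} is essential: because $\rho\in\mathcal{W}^{1,\infty}(\RR^N)$ is continuous and vanishes on $\partial\Omega$ and $m>1$, the function $\rho^m$ also lies in $\mathcal{W}^{1,\infty}(\RR^N)$ with weak gradient $m\rho^{m-1}\nabla\rho$, which is continuous and vanishes on $\partial\Omega$; simultaneously $\rho\,\nabla S_k$ is continuous and vanishes there since $\nabla S_k$ is bounded by Lemma \ref{lem:regS}. Consequently both sides of the equation extend continuously across $\partial\Omega$ and the identity \eqref{eq:steady} holds in $\mathcal{D}'(\RR^N)$, so $\rho$ is a stationary state in the sense of Definition \ref{def:sstates}.

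For the sub-critical case in rescaled variables the argument is structurally identical, simply carrying along the confining term: starting from the Euler--Lagrange identity \eqref{eq:min resc} of Proposition \ref{prop:charac min}(ii) one obtains on $\Omega$
\begin{equation*}
\frac{m}{N(m-1)}\rho^{m-1}(x) + 2\chi\, W_k\ast\rho(x) + \tfrac{1}{2}|x|^2 = D_{k,\resc}[\rho],
\end{equation*}
and differentiating, multiplying by $\rho$ and using the same chain-rule identity yields $\frac{1}{N}\nabla\rho^m=-2\chi\,\rho\,\nabla S_k-x\rho$ on $\Omega$ and trivially off $\overline{\Omega}$. The boundary behaviour is handled exactly as above, using now only the $\mathcal{W}^{1,\infty}_{loc}$ regularity of $\rho$ (which together with compact support still gives a global Lipschitz bound), and the additional linear term $x\rho$ is manifestly continuous across $\partial\Omega$. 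The anticipated obstacle is precisely the boundary regularity across $\partial\Omega$, but this has effectively been absorbed into the work done in Corollaries \ref{cor:compactsupp_min}--\ref{cor:regmin}, so the present theorem reduces to the elementary chain-rule manipulation described above.
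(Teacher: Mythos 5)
Your proposal is correct and follows essentially the same route as the paper, whose proof is a condensed version of exactly this argument: Corollary \ref{cor:regmin} justifies the chain rule $\nabla\rho^m=\frac{m}{m-1}\rho\nabla\rho^{m-1}$, and differentiating the Euler--Lagrange relations \eqref{eq:min}, \eqref{eq:min resc} yields \eqref{eq:steady}, \eqref{eq:steady resc} in the sense of distributions. Your only slight overstatement is calling the weak gradient $m\rho^{m-1}\nabla\rho$ continuous (only $\rho^{m-1}$ is, while $\nabla\rho$ is merely bounded), but this is harmless since $\rho^m\in\mathcal{W}^{1,\infty}\left(\RR^N\right)$ implies its distributional gradient coincides with the a.e.\ pointwise one, and $\partial\Omega$ is a null set, so the a.e.\ identity on $\Omega$ and on the exterior already gives the distributional equation.
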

\begin{proof}
 For $\chi=\chi_c$, let $\rho \in \mY$ be a global minimiser of $\mF_k$. The regularity properties provided by Corollary \ref{cor:regmin} imply that $\nabla \rho^m =\frac{m}{m-1}\rho \nabla \rho^{m-1}$ and that $\rho$ is indeed a distributional solution of \eqref{eq:steady} using \eqref{eq:min}. As a consequence, $\rho$ is a stationary state of equation \eqref{eq:KS} according to Definition \ref{def:sstates}. A similar argument holds true in the rescaled case for sub-critical $\chi$.
\end{proof}

\begin{remark}\label{rmk:radiality}
As a matter of fact, the recent result of radial symmetry of stationary states \cite{CHVY} applies to the critical case $\chi=\chi_c$ in the range $k\in [2-N,0)$. Together, Theorem \ref{thm:sstatesmin} and Proposition \ref{prop:charac min} show that all stationary states are radially symmetric for the full range $k\in (-N,0)$. In other words, the homogeneity of the energy functional $\mF_k$ allows us to extend the result in \cite{CHVY} to $k\in (-N,2-N)$ and to find a simple  alternative proof in the less singular than Newtonian range.
\end{remark}

\section{Fast Diffusion Case $k>0$}
\label{sec:Fast Diffusion Case k pos}

We investigate in this section the case $k \in (0,N)$ and hence $m\in (0,1)$ where the diffusion is fast in regions where the density of particles is low. The main difficulty is that it seems there is no HLS-type inequality in this range which would provide a lower bound on the free energy, and so a different approach is needed than in the porous medium regime. We concentrate here on the radial setting.
Let us define $\mX$ to be the set 
 \begin{equation*}
 \mX := \left\{ \rho \in L_+^1\left(\RR^N\right): ||\rho||_1=1\, , \, \int x\rho(x)\, dx=0 \right\}\, .
 \end{equation*}
 The following Lemma will be a key ingredient for studying the behaviour in the fast diffusion case.
 \begin{lemma}\label{lem:kthmomentbound}
  For $k \in (0,N)$, any radially symmetric non-increasing $\rho \in \mX$ with $|x|^k\rho \in L^1\left(\RR^N\right)$ satisfies
  \begin{equation}\label{eq:kthmomentbound}
   I_k[\rho] \leq \frac{|x|^k}{k} \ast \rho \leq \eta \left(\frac{|x|^k}{k} + I_k[\rho]\right)\,
  \end{equation}
with  
  $$
  I_k[\rho]:=\int_{\RR^N} \frac{|x|^k}{k}\rho(x) \, dx, \qquad
  \eta = \max\{1,2^{k-1}\}.
  $$
 \end{lemma}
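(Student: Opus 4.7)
The plan is to handle the two inequalities separately: the upper bound is essentially the same bound as in \eqref{eq:kconvex}, while the lower bound exploits the radial monotonicity of $\rho$ through a layer-cake argument and a geometric rearrangement fact.

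For the upper bound, I would apply the already-established inequality \eqref{eq:kconvex}, $|x-y|^k \leq \eta(|x|^k + |y|^k)$, pointwise under the convolution integral:
\begin{equation*}
\frac{|x|^k}{k} \ast \rho(x) = \frac{1}{k}\int_{\RR^N} |x-y|^k \rho(y)\, dy \leq \frac{\eta}{k}\int_{\RR^N}\left(|x|^k + |y|^k\right)\rho(y)\, dy = \eta\left(\frac{|x|^k}{k} + I_k[\rho]\right),
\end{equation*}
using only that $\|\rho\|_1 = 1$ and the definition of $I_k[\rho]$. This part is immediate and does not require the radial monotonicity.

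For the lower bound, the plan is to reduce it to a purely geometric statement about balls. Since $\rho$ is radially symmetric non-increasing, the layer-cake representation writes $\rho(y) = \int_0^\infty \ind_{B_{R(t)}}(y)\, dt$, where $B_{R(t)} = \{\rho > t\}$ is a ball centered at the origin. Substituting and exchanging integrals,
\begin{equation*}
\int_{\RR^N}\left(|x-y|^k - |y|^k\right)\rho(y)\, dy = \int_0^\infty \left(\int_{B_{R(t)}} |x-y|^k\, dy - \int_{B_{R(t)}} |y|^k\, dy\right) dt\, ,
\end{equation*}
so it suffices to show that for every $R>0$ and $x \in \RR^N$,
\begin{equation*}
\int_{B_R(x)} |z|^k\, dz \geq \int_{B_R(0)} |z|^k\, dz\, ,
\end{equation*}
where the left-hand side is obtained by the change of variables $z = x - y$ in the first integral. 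This is the statement that, among all measurable sets $A \subset \RR^N$ with $|A| = |B_R|$, the integral of the radially non-decreasing function $z \mapsto |z|^k$ is minimized by the ball centered at the origin.

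This geometric fact I would prove by a second layer-cake expansion on the integrand: since $\{|z|^k > t\} = \RR^N \setminus B_{t^{1/k}}$,
\begin{equation*}
\int_A |z|^k\, dz = \int_0^\infty \left(|A| - |A \cap B_{t^{1/k}}|\right) dt\, ,
\end{equation*}
and with $|A|$ fixed, the right-hand side is minimized precisely when $|A \cap B_r|$ is maximized for every $r > 0$, which occurs when $A = B_R$ with $R$ chosen so that $|A| = |B_R|$. Combining these steps yields $\frac{|x|^k}{k}\ast\rho(x) \geq I_k[\rho]$ as claimed. The only subtle point is justifying the Fubini exchange in the layer-cake step, which is straightforward since $|x|^k \rho \in L^1(\RR^N)$ by assumption, ensuring all integrals are finite.
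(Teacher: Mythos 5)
Your proposal is correct, and while the upper bound coincides with the paper's (both simply invoke \eqref{eq:kconvex}), your lower bound takes a genuinely different route. The paper proves that $W_k\ast\rho$ is radially non-decreasing and then evaluates at the origin, where $(W_k\ast\rho)(0)=I_k[\rho]$: in one dimension this is done by rewriting $\partial_x(W_k\ast\rho)$ as $\frac1k\int_{y>0}\left(|x-y|^k-|x+y|^k\right)\partial_y\rho\,dy\geq 0$, which implicitly requires interpreting $\partial_y\rho$ as a (non-positive) measure, and in dimension $N\geq 2$ by observing that $\Delta W_k=(N+k-2)|x|^{k-2}>0$, so that the divergence theorem on $B(0,r)$ yields $\partial_r(W_k\ast\rho)>0$; note this subharmonicity argument fails for $N=1$, $k\in(0,1)$, which is precisely why the paper splits into two cases. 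Your argument instead uses the layer-cake decomposition $\rho(y)=\int_0^\infty\ind_{B_{R(t)}}(y)\,dt$ (valid up to a null set, since superlevel sets of a radially non-increasing density are centered balls) to reduce the claim to the bathtub-type fact that the centered ball minimizes $\int_A|z|^k\,dz$ among sets of fixed measure, which your second layer-cake expansion establishes via $|A\cap B_r|\leq\min\{|A|,|B_r|\}$ with equality for $A=B_R$. What each approach buys: the paper's gives the slightly stronger information of strict radial monotonicity of the potential (a fact it reuses in spirit elsewhere), whereas yours is uniform in dimension with no case split, needs no differentiation of $\rho$ or of the convolution, and in fact proves the lower bound for convolution against any radially non-decreasing kernel, not just $|x|^k/k$. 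Your Fubini justification is adequate as stated — both $\int|x-y|^k\rho(y)\,dy$ (finite by your own upper bound together with $\|\rho\|_1=1$ and $|y|^k\rho\in L^1$) and $\int|y|^k\rho(y)\,dy$ are finite integrals of nonnegative functions, so Tonelli applies to each term separately before subtracting.
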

\begin{proof}The bound from above was proven in \eqref{eq:kconvex}. To prove the lower bound in one dimension, we use the symmetry and monotonicity assumption to obtain
\begin{equation*}
\partial_x \left(\frac{|x|^k}{k} \ast \rho  \right) = \frac{1}{k}\int_{y>0} \left(|x-y|^k-|x+y|^k\right)\partial_y \rho\, dy \quad \geq 0
\end{equation*}
since $|x-y|^k-|x+y|^k \leq 0$ for $x, y \geq 0$. Hence the above integral is non-negative and \eqref{eq:kthmomentbound} holds true in one dimension for the bound from below.\\
For $N\ge 2$, note that
since both $W_k(x)=|x|^k/k$ and $\rho$ are radial functions, so is the convolution $W_k \ast \rho$. By slight abuse of notation, we write $(W_k \ast \rho) (r)$. 
For $r>0$, we have
\begin{align*}
 \int_{B(0,r)} \Delta_x\left(W_k \ast \rho\right)\, dx
& = \int_{\partial B(0,r)} \nabla_x\left(W_k \ast \rho\right) \cdot \overline{n}\, dS \\
 &= |\partial B(0,r)|\, \partial_r\left(W_k \ast \rho\right) 
 = r^{N-1} \sigma_N \, \partial_r\left(W_k \ast \rho\right) .
\end{align*}
From 
$
\Delta_x W_k (x)=\left(N+k-2\right) |x|^{k-2} >0
$,
it then follows that $\partial_r\left(W_k \ast \rho\right)(r) >0$ for all $r>0$. This implies the lower bound in higher dimensions.
\end{proof}

\subsection{Results in Original Variables}
\label{sec:Results in Original Variables}

\begin{theorem}[Non-Existence of Stationary States]\label{thm:noEsstates}
 Let $k \in (0,N)$. For any $\chi>0$, there are no radially symmetric non-increasing stationary states in $\mX$ for equation \eqref{eq:KS} with $k$th moment bounded.
\end{theorem}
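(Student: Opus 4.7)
The strategy is a combined Euler--Lagrange and growth argument: I exploit the tension between the pointwise decay that the stationary equation forces on $\bar\rho$ and the hypothesis that $|x|^k\bar\rho\in L^1(\RR^N)$.

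First I reproduce the argument of Corollary~\ref{cor:EL}(i), which is stated there for $k<0$ but goes through verbatim on compact subsets of the open set $\Omega:=\{\bar\rho>0\}$ where $\bar\rho$ is continuous and bounded away from zero. Continuity holds by Lemma~\ref{lem:sstatesreg}(i), whose hypothesis $|x|^k\bar\rho\in L^1$ is exactly what we assume. Using the fair-competition identity $N(m-1)=-k$, this produces
\begin{equation}\label{eq:ELpos}
-\frac{m}{k}\,\bar\rho(x)^{m-1}+2\chi\,\bar S_k(x)=C
\end{equation}
on each connected component of $\Omega$, with $C$ constant on that component. The crucial sign observation is $m-1<0$, so $\bar\rho^{m-1}$ blows up wherever $\bar\rho$ vanishes.

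Next I rule out compactly supported stationary states. Because $\bar\rho$ is radially symmetric and non-increasing, $\Omega$ is either a ball $B(0,R)$ or the whole space $\RR^N$. In the ball case $\bar\rho(x)\to 0$ as $|x|\to R^-$ by continuity, so the left-hand side of \eqref{eq:ELpos} tends to $+\infty$; meanwhile $\bar S_k\in L^\infty_{loc}(\RR^N)$ by Lemma~\ref{lem:regS}(i) keeps the right-hand side bounded on $\overline{B(0,R)}$, a contradiction. Hence $\Omega=\RR^N$ and \eqref{eq:ELpos} holds globally with a single constant $C$.

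The conclusion is then immediate from Lemma~\ref{lem:kthmomentbound}. Its upper bound $\bar S_k(x)\leq \eta\bigl(|x|^k/k+I_k[\bar\rho]\bigr)$ combined with \eqref{eq:ELpos} yields $\bar\rho(x)^{m-1}\leq A(1+|x|^k)$ for a constant $A>0$ depending on $\chi,m,k,I_k[\bar\rho]$ and $C$. Since $m-1<0$, this inverts to
\[
\bar\rho(x)\geq A'\,(1+|x|^k)^{1/(m-1)}=A'\,(1+|x|^k)^{-N/k},
\]
where I used the fair-competition identity $k/(m-1)=-N$. In particular $\bar\rho(x)\gtrsim|x|^{-N}$ for $|x|$ large, so
\[
\int_{\RR^N}|x|^k\bar\rho(x)\,dx\gtrsim\int_{|x|\geq 1}|x|^{k-N}\,dx=+\infty,
\]
contradicting $|x|^k\bar\rho\in L^1(\RR^N)$. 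The only mildly delicate step is the derivation of \eqref{eq:ELpos}: one must verify as in Corollary~\ref{cor:EL}(i) that $\nabla\bar\rho^m=\tfrac{m}{m-1}\bar\rho\,\nabla\bar\rho^{m-1}$ makes distributional sense on smooth relatively compact subdomains of $\Omega$. Everything after that is soft asymptotic analysis driven by the observation that, in the fair-competition regime, the stationary equation forces precisely the borderline decay exponent $-N$ at infinity, which is incompatible with any positive moment of order $k$.
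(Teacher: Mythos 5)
Your proposal is correct and follows essentially the same route as the paper's proof: derive the Euler--Lagrange identity on the support as in Corollary~\ref{cor:EL}, rule out compact support via continuity of $\bar\rho$ and local boundedness of $\bar S_k$ from Lemma~\ref{lem:regS}, then invert using the upper bound of Lemma~\ref{lem:kthmomentbound} to get $\bar\rho(x)\gtrsim(1+|x|^k)^{-N/k}$ — the paper contradicts $\bar\rho\in L^1(\RR^N)$ while you contradict the $k$th moment bound, an immaterial variation. One cosmetic slip: with your normalisation, since $-m/k<0$ and $\bar\rho^{m-1}\to+\infty$ as $\bar\rho\to0^+$, the left-hand side of your identity tends to $-\infty$, not $+\infty$, at the boundary of a putative ball of support; the contradiction with the constant right-hand side is of course unaffected.
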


 \begin{proof} 
 Assume $\bar\rho \in \mX$ is a radially symmetric non-increasing stationary state for equation \eqref{eq:KS} such that $|x|^k\bar\rho\in L^1\left(\RR^N\right)$. Then $\bar \rho$ is continuous by Lemma \ref{lem:sstatesreg}. We claim that $\bar \rho$ is supported on $\RR^N$ and satisfies 
\begin{equation}\label{eq:sstates kpos}
  \bar\rho(x) = \left(A W_k\ast \bar\rho(x) + C[\bar \rho]\right)^{-N/k}, \quad a.e.\, \, x \in \RR^N\, ,
 \end{equation}
 with $A:=2 \chi Nk/(N-k) >0$ and some suitably chosen constant $C[\bar \rho]$. Indeed, by radiality and monotonicity, $\supp(\bar \rho)=B(0,R)$ for some $R \in (0,\infty]$ and by the same arguments as in Corollary \ref{cor:EL} leading to \eqref{eq:steady3}, we obtain 
 \begin{equation*}
  \bar\rho(x)^{-k/N} = A W_k\ast \bar\rho(x) + C[\bar \rho], \quad a.e.\, \, x \in B(0,R)\, .
 \end{equation*}
 Assume $\bar\rho$ has compact support, $R<\infty$. It then follows from Lemma \ref{lem:kthmomentbound} that the left-hand side is bounded above,
 \begin{equation*}
  \bar\rho(x)^{-k/N} \leq \eta A I_k[\bar \rho] + \frac{\eta A R^k}{k} + C[\bar \rho], \quad a.e.\, \, x \in B(0,R)\, .
 \end{equation*}
 By continuity, $\rho(x)\to 0$ as $|x|\to R$, but then $\rho(x)^{-k/N}$ diverges, contradicting the bound from above. We must therefore have $R=\infty$, which concludes the proof of \eqref{eq:sstates kpos}.\\
 Next, taking the limit $x \to 0$ in \eqref{eq:sstates kpos} yields
 \begin{equation*}
  AI_k[\bar\rho] + C[\bar\rho] >0.
 \end{equation*}
We then have from Lemma \ref{lem:kthmomentbound} for a.e. $x \in \RR^N$,
$$
0
\leq
\left(A\,\eta \, \left(\frac{|x|^k}{k}+ I_k[\bar\rho]\right) + C[\bar\rho]\right)^{-N/k}
\leq
\bar\rho(x) 
\, .
$$
However, the lower bound in the estimate above is not integrable on $\RR^N$, and hence $\bar \rho \notin L^1\left(\RR^N\right)$. This contradics $\bar \rho \in \mX$.
\end{proof}

In the fast diffusion regime, we do not have a suitable HLS-type inequalilty to show boundedness of the energy functional $\mF_k$. Although we do not know whether $\mF_k$ is bounded below or not, we can show that the infimum is not achieved in the radial setting.

\begin{theorem}[Non-Existence of Global Minimisers]\label{thm:noEmins}
 Let $k \in (0,N)$. For any $\chi>0$, there are no radially symmetric non-increasing global minimisers of $\mF_k$ over $\mY_k$.
\end{theorem}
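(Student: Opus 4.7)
I would argue by contradiction. Suppose $\bar\rho \in \mY_k$ is a radially symmetric non-increasing global minimiser of $\mF_k$. The plan is to show that $\bar\rho$ must satisfy an algebraic identity of the form \eqref{eq:sstates kpos}, and then to invoke the non-integrability argument of Theorem \ref{thm:noEsstates} to contradict $\bar\rho \in L^1(\RR^N)$.

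First, the mass-preserving dilation $\bar\rho_\lambda(x) = \lambda^N \bar\rho(\lambda x)$ belongs to $\mY_k$ for every $\lambda > 0$, since mass and zero centre are preserved while the $k$-th moment scales by $\lambda^{-k}$. The homogeneity \eqref{dil} gives $\mF_k[\bar\rho_\lambda] = \lambda^{-k}\mF_k[\bar\rho]$, so minimality forces $\mF_k[\bar\rho] \leq \lambda^{-k}\mF_k[\bar\rho]$ for every $\lambda > 0$, which is only possible if $\mF_k[\bar\rho] = 0$. Next, I would derive the Euler-Lagrange identity on $\supp(\bar\rho)$ by adapting Proposition \ref{prop:charac min}(i). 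For $\psi \in C_c^\infty(\RR^N)$ with $\psi(-x) = \psi(x)$, set $\varphi := \bar\rho(\psi - \int \psi\bar\rho\,dx)$. Since $\bar\rho + t\varepsilon\varphi \geq \tfrac12\bar\rho$ for $\varepsilon$ small and every $t \in [0,1]$, we have
\begin{equation*}
\bigl|(\bar\rho + t\varepsilon\varphi)^{m-1}\varphi\bigr| \leq 2^{1-m}\bar\rho^{m-1}|\varphi| \leq 2^{2-m}||\psi||_\infty \bar\rho^m,
\end{equation*}
which lies in $L^1(\RR^N)$. Dominated convergence and the vanishing of the first variation at $\bar\rho$ then yield, combined with $\mF_k[\bar\rho] = 0$ and $N(m-1) = -k$,
\begin{equation*}
\bar\rho(x) = \bigl(A\, W_k \ast \bar\rho(x) - B\bigr)^{-N/k} \quad \text{a.e. on}\, \, \supp(\bar\rho),
\end{equation*}
with $A := 2\chi k/m > 0$ and $B := (2-m)||\bar\rho||_m^m/m > 0$; this is precisely the form \eqref{eq:sstates kpos}.

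It remains to rule out compact support and then exploit Lemma \ref{lem:kthmomentbound}. To exclude $\supp(\bar\rho) = B(0,R)$ with $R < \infty$, I would use a second perturbation $\tilde\varphi = \tilde\psi - \bar\rho\int \tilde\psi\, dx$, where $\tilde\psi \in C_c^\infty(\RR^N)$ is radial, nonnegative, and supported in a thin annulus outside $B(0,R)$. Since $\tilde\psi$ and $\bar\rho$ have disjoint supports, the only nonlinear contribution from outside $\supp(\bar\rho)$ is $\varepsilon^m \int \tilde\psi^m /(N(m-1)) = -\varepsilon^m \int \tilde\psi^m / k$, while all remaining terms in $\mF_k[\bar\rho + \varepsilon\tilde\varphi] - \mF_k[\bar\rho]$ are of order $\varepsilon$. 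Since $0 < m < 1$ gives $\varepsilon^m \gg \varepsilon$ as $\varepsilon \to 0^+$, the energy strictly decreases, contradicting minimality. Hence $\supp(\bar\rho) = \RR^N$, and substituting the upper bound of Lemma \ref{lem:kthmomentbound} into the algebraic identity yields
\begin{equation*}
\bar\rho(x) \geq \bigl(A\eta(|x|^k/k + I_k[\bar\rho]) - B\bigr)^{-N/k} \sim |x|^{-N} \quad \text{as}\, \, |x| \to \infty,
\end{equation*}
which is not integrable on $\RR^N$, contradicting $\bar\rho \in \mY_k$.

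The main technical obstacle is the first-variation calculation in the second paragraph. Unlike the porous medium case treated in Proposition \ref{prop:charac min}(i), where $\rho^{m-1}$ is a bounded positive power of $\rho$, here $\rho^{m-1}$ blows up where $\rho$ is small, so a direct $L^{m/(m-1)}$-type estimate is unavailable. The multiplicative structure $\varphi = \bar\rho(\psi - \int\psi\bar\rho\,dx)$---which forces $\varphi$ to vanish on $\{\bar\rho = 0\}$ and keeps $\bar\rho + t\varepsilon\varphi$ comparable to $\bar\rho$ on $\supp(\bar\rho)$---is the essential device that rescues dominated convergence. A secondary subtle point is the second perturbation $\tilde\psi$: adding mass off $\supp(\bar\rho)$ produces an $\varepsilon^m$-singularity in $\mU_m$, and it is precisely this sublinearity in $\varepsilon$ that makes compact support incompatible with minimality.
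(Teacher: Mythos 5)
Your proof is correct, and its overall skeleton matches the paper's: derive the Euler--Lagrange identity of the form \eqref{eq:sstates kpos} on the support, show the support is all of $\RR^N$, then feed the upper bound of Lemma \ref{lem:kthmomentbound} into the identity to get $\bar\rho(x)\geq \bigl(A\eta(|x|^k/k+I_k[\bar\rho])-B\bigr)^{-N/k}$, which decays like $|x|^{-N}$ and is not integrable. You diverge in two places. First, you use the homogeneity \eqref{dil} and dilations within $\mY_k$ to force $\mF_k[\bar\rho]=0$, pinning the constant down explicitly as $-B<0$ with $B=(2-m)||\bar\rho||_m^m/m$ (your constants check out: $A=2\chi k/m=2\chi Nk/(N-k)$, and $-B$ equals the paper's $D_k[\bar\rho]$ once $\mF_k[\bar\rho]=0$); the paper instead keeps $D_k[\rho]$ abstract and obtains the needed positivity $A I_k[\rho]+D_k[\rho]>0$ by letting $|x|\to 0$ in the identity, using continuity of $\rho$ inside its support. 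Neither step is strictly necessary, since positivity of the base follows on the full support directly from $A\,W_k\ast\bar\rho-B=\bar\rho^{-k/N}\in(0,\infty)$, which you in effect use. Second, and more substantively, the full-support step: the paper deduces $\supp(\rho)=\RR^N$ from the a.e.\ inequality \eqref{eq:min kpos 2}, but for $m<1$ that inequality is delicate off the support --- $\rho^{m-1}$ blows up where $\rho$ vanishes and carries the negative coefficient $m/(N(m-1))$, so as written the inequality is vacuous on $\{\rho=0\}$ and the formal first variation there is actually $-\infty$. Your annulus perturbation $\tilde\varphi=\tilde\psi-\bar\rho\int\tilde\psi$ turns precisely this singularity into the proof: adding mass off a compact support changes the energy by $-\eps^m\int\tilde\psi^m/k+O(\eps)<0$, contradicting minimality since $\eps^m\gg\eps$ for $m\in(0,1)$. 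This is arguably the more watertight mechanism, and it plays the role that global continuity plus the boundary argument plays for stationary states in Theorem \ref{thm:noEsstates} (global continuity being unavailable for minimisers, which are only known continuous inside their support). You also correctly repair the dominated-convergence step of Proposition \ref{prop:charac min} for $m\in(0,1)$ via the comparability $\bar\rho+t\eps\varphi\geq\tfrac12\bar\rho$, where the paper's H\"older-based domination is written for $m>1$; just make sure to also record, as in Proposition \ref{prop:charac min}, that $\int (W_k\ast\bar\rho)|\varphi|\,dx$ and $\mW_k[\varphi]$ are finite (both follow from $|\varphi|\leq 2||\psi||_\infty\bar\rho$, \eqref{eq:kconvex} and the bounded $k$th moment), and that $\tilde\psi$ being radial preserves the zero centre of mass.
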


 \begin{proof} 
 Let $\rho$ be a global minimiser of $\mF_k$ over $\mY_k$. Following the same argument as in Proposition \ref{prop:charac min}, we obtain
 \begin{align}
   \rho(x)^{-k/N} &= A W_k\ast \rho(x) + D_k[\rho] \quad \text{a.e. in}\, \supp(\rho)\,,\label{eq:min kpos 1}\\
   \rho(x)^{-k/N} &\geq A W_k\ast \rho(x) + D_k[\rho] \quad \text{a.e. in}\, \RR^N\, .\label{eq:min kpos 2}
 \end{align}
where
$$
D_k[\rho] := -\frac{2Nk}{(N-k)} \mF_k[\rho] - \left(\frac{N+k}{N-k}\right) \int_{\RR^N}\bar\rho^m(x)\, dx\,.
$$
Since $W_k$ is continuous and $\rho \in L^1\left(\RR^N\right)$, it follows from \eqref{eq:min kpos 1} that $\rho$ is continuous inside its support, being a continuous function of $W_k$ convolved with $\rho$. If $\rho$ is radially symmetric non-increasing, then $\supp(\rho)=B(0,R)$ for some $R \in (0,\infty]$. By continuity of $\rho$ at the origin, we can take the limit $|x|\to0$ in \eqref{eq:min kpos 1} to obtain $A I_k[\rho] + D_k[\rho]>0$. It then follows from \eqref{eq:min kpos 2} and \eqref{eq:kthmomentbound} that in fact $\rho(x)^{-k/N} >0$ for a.e. $x \in \RR^N$. Hence, we conclude that $\supp(\rho)=\RR^N$. The Euler-Lagrange condition \eqref{eq:min kpos 1} and estimate \eqref{eq:kthmomentbound} yield
\begin{align*}
\rho(x) 
 =  \left(A \, W_k \ast \rho +D_k[\rho]\right)^{-N/k}
 \geq   \left(A \, \eta\,\left(\frac{|x|^k}{k} + I_k[\rho]\right) + D_k[\rho]\right)^{-N/k}
\end{align*}
a.e. on $\RR^N$. Again, the right-hand side is not integrable for any $k \in (0,N)$ and hence $\rho \notin \mY_k$.
\end{proof}

\subsection{Results in Rescaled Variables}
\label{sec:Results in Rescaled Variables}

\begin{corollary}[Necessary Condition for Stationary States] \label{cor:sstates kpos}
Let $k \in (0,N)$, $\chi>0$ and $\bar \rho \in \mX$.
If $\bar \rho$ is a radially symmetric non-increasing stationary state of the rescaled equation \eqref{eq:KSresc} with bounded $k$th moment, then $\bar \rho$ is continuous, supported on $\RR^N$ and satisfies  
\begin{equation}\label{eq:sstates resc kpos}
  \bar\rho(x) = \left(A W_k\ast \bar\rho(x) + B|x|^2 + C[\bar \rho]\right)^{-N/k}, \quad a.e.\, \, x \in \RR^N\, .
 \end{equation}
 Here, the constant $C[\bar \rho]$ is chosen such that $\bar \rho$ integrates to one and
 \begin{equation}\label{ab}
 A:=2 \chi \frac{Nk}{(N-k)} >0, \quad
 B:=\frac{Nk}{2(N-k)} >0.
 \end{equation}
\end{corollary}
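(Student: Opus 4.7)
The plan is to mirror the derivation used in Theorem \ref{thm:noEsstates} and Corollary \ref{cor:EL}(ii), adapting it to the rescaled fast-diffusion setting. Continuity of $\bar\rho$ on $\RR^N$ follows directly from Lemma \ref{lem:sstatesreg}(ii), since by hypothesis $|x|^k\bar\rho\in L^1(\RR^N)$. By radial monotonicity, $\supp(\bar\rho)=\overline{B(0,R)}$ for some $R\in(0,\infty]$, so $\supp(\bar\rho)$ has a single connected component and any ``constant on connected components'' statement becomes a single constant $C[\bar\rho]$.

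Next I would derive the algebraic identity. On the open set $\{\bar\rho>0\}$, continuity gives $\bar\rho$ locally bounded away from zero, so $\nabla\bar\rho^{m-1}=\frac{m-1}{m\bar\rho}\nabla\bar\rho^m$ holds distributionally. Feeding this into \eqref{eq:steady resc} and using $\nabla\bar S_k\in L^1_{loc}$ together with Lemma \ref{lem:regS}, I obtain
\begin{equation*}
\nabla\Bigl(\tfrac{m}{N(m-1)}\bar\rho^{m-1}+2\chi\bar S_k+\tfrac{|x|^2}{2}\Bigr)=0
\end{equation*}
in the sense of distributions on the (connected) interior of the support. Since $m=1-k/N\in(0,1)$, we have $m-1=-k/N<0$ and $\frac{m}{N(m-1)}=-\frac{N-k}{Nk}$; rearranging and absorbing all constants into $C[\bar\rho]$ produces
\begin{equation*}
\bar\rho(x)^{-k/N}=A\,W_k\!\ast\bar\rho(x)+B|x|^2+C[\bar\rho]
\end{equation*}
with $A,B$ as in \eqref{ab}, valid a.e.\ on $\supp(\bar\rho)$.

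The main obstacle is ruling out compact support, i.e.\ showing $R=\infty$. Suppose for contradiction $R<\infty$. Then $\bar\rho$ is continuous and vanishes on $\partial B(0,R)$, forcing $\bar\rho(x)^{-k/N}\to+\infty$ as $|x|\to R^-$. But by Lemma \ref{lem:regS}(i), $W_k\ast\bar\rho\in L^\infty_{loc}(\RR^N)$ (using $|x|^k\bar\rho\in L^1$), the term $B|x|^2$ is obviously bounded on $\overline{B(0,R)}$, and $C[\bar\rho]$ is a fixed constant; hence the right-hand side remains bounded as $|x|\to R$, contradicting the blow-up of the left-hand side. Therefore $R=\infty$, $\supp(\bar\rho)=\RR^N$, and the Euler--Lagrange identity \eqref{eq:sstates resc kpos} extends a.e.\ to all of $\RR^N$.

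Finally, evaluating the identity at $x=0$ (using continuity) shows that $AI_k[\bar\rho]+C[\bar\rho]>0$, so $C[\bar\rho]$ is legitimately determined by imposing $\|\bar\rho\|_1=1$. Note that, unlike in the non-rescaled case of Theorem \ref{thm:noEsstates}, no contradiction with integrability arises here: the confinement term $B|x|^2$ ensures the decay $\bar\rho(x)\lesssim|x|^{-2N/k}$ at infinity, which is integrable precisely in the expected range $k<2$ flagged in Theorem \ref{mainthm3}.
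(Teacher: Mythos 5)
Your proof is correct and takes essentially the same approach as the paper: the paper's own proof is a one-line reference to the argument of Theorem \ref{thm:noEsstates}, which you reproduce faithfully --- continuity via Lemma \ref{lem:sstatesreg}, the Euler--Lagrange identity via the argument of Corollary \ref{cor:EL} leading to \eqref{eq:steady3}, and exclusion of compact support by contradicting the blow-up of $\bar\rho^{-k/N}$ at the boundary of the support with the boundedness of the right-hand side. The only cosmetic difference is that you invoke Lemma \ref{lem:regS}(i) for the local boundedness of $W_k\ast\bar\rho$ where the paper uses the upper bound in Lemma \ref{lem:kthmomentbound}; both are available under the stated hypotheses.
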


\begin{proof} Continuity follows from Lemma \ref{lem:sstatesreg}, and we can show $\supp(\bar\rho)=\RR^N$ and \eqref{eq:sstates resc kpos} by a similar argument as for \eqref{eq:sstates kpos}.
\end{proof}
From the above analysis, if diffusion is too fast, then there are no stationary states to the rescaled equation \eqref{eq:KSresc}:

\begin{theorem}[Non-Existence of Stationary States]\label{thm:noEsstates resc}
 Let $\chi>0$, $N\geq 3$ and $k\in[2,N)$, then there are no radially symmetric non-increasing stationary states in $\mX$ with $k$th moment bounded to the rescaled equation \eqref{eq:KSresc}.
\end{theorem}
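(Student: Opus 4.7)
The plan is to mimic the argument for Theorem \ref{thm:noEsstates} in original variables, but now keeping track of the extra quadratic confinement term $B|x|^2$. The key observation is that for $k\geq 2$ this confinement term is \emph{dominated} (or matched) at infinity by the contribution $|x|^k$ coming from the interaction kernel, so the rescaled stationary equation still forces a non-integrable lower bound on $\bar\rho$.

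First, suppose for contradiction that $\bar\rho\in\mX$ is a radially symmetric non-increasing stationary state of \eqref{eq:KSresc} with $|x|^k\bar\rho\in L^1(\RR^N)$. By Corollary \ref{cor:sstates kpos}, $\bar\rho$ is continuous, strictly positive on all of $\RR^N$, and satisfies
\begin{equation*}
\bar\rho(x)=\bigl(A\,W_k\ast\bar\rho(x)+B|x|^{2}+C[\bar\rho]\bigr)^{-N/k}\qquad\text{a.e. in }\RR^N,
\end{equation*}
with $A,B>0$ as in \eqref{ab}. Taking the limit $|x|\to 0$ and using continuity of $W_k\ast\bar\rho$ (which follows from $\bar\rho\in L^1\cap L^\infty$ together with Lemma \ref{lem:regS}), we obtain $A\,I_k[\bar\rho]+C[\bar\rho]>0$.

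Next, I would apply the upper bound \eqref{eq:kthmomentbound} from Lemma \ref{lem:kthmomentbound} to control the convolution and deduce the pointwise lower bound
\begin{equation*}
\bar\rho(x)\;\geq\;\Bigl(A\,\eta\bigl(\tfrac{|x|^k}{k}+I_k[\bar\rho]\bigr)+B|x|^{2}+C[\bar\rho]\Bigr)^{-N/k}\qquad\text{a.e. in }\RR^N.
\end{equation*}
Here $\eta=\max\{1,2^{k-1}\}$. Since $k\geq 2$, the term $|x|^k$ dominates $|x|^2$ as $|x|\to\infty$ (they are comparable when $k=2$), so the expression in parentheses grows at most like $|x|^k$, giving
\begin{equation*}
\bar\rho(x)\;\gtrsim\;|x|^{-N}\qquad\text{for }|x|\text{ large.}
\end{equation*}
But $|x|^{-N}$ fails to lie in $L^1(\RR^N)$ at infinity, which contradicts $\bar\rho\in\mX$.

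The main technical point in the argument is simply checking that the quadratic confinement in the rescaled Euler--Lagrange relation cannot rescue integrability: this is exactly where the hypothesis $k\geq 2$ is used. In the complementary range $0<k<2$ the quadratic term \emph{does} dominate at infinity, producing a decay like $|x|^{-2N/k}$, which is integrable precisely when $2N/k>N$, i.e.\ $k<2$. This explains why Theorem \ref{thm:noEsstates resc} is sharp and why the existence result in the range $k\in(0,1]$ announced in Theorem \ref{mainthm3} is consistent with it. No further regularity work is required beyond what Corollary \ref{cor:sstates kpos} already supplies, so I would expect the proof to be fairly short once the comparison above is laid out.
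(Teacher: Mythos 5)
Your proposal is correct and follows essentially the same route as the paper: the paper's proof likewise combines the Euler--Lagrange relation \eqref{eq:sstates resc kpos} from Corollary \ref{cor:sstates kpos} with the upper bound in Lemma \ref{lem:kthmomentbound} to obtain the pointwise lower bound $\bar\rho(x)\geq\bigl(A\,\eta\,(\tfrac{|x|^k}{k}+I_k[\bar\rho])+B|x|^2+C[\bar\rho]\bigr)^{-N/k}$, which is not integrable on $\RR^N$ for $k\geq 2$, contradicting $\bar\rho\in L^1\left(\RR^N\right)$. Your additional observations (positivity of $A\,I_k[\bar\rho]+C[\bar\rho]$ via $x\to0$, and the sharpness discussion for $0<k<2$) are consistent with the paper's remarks but not needed beyond what its two-line proof already uses.
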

\begin{proof}
 Assume $\bar \rho \in \mX$ is a radially symmetric non-increasing stationary state such that $|x|^k\bar\rho\in L^1\left(\RR^N\right)$. It follows from \eqref{eq:sstates resc kpos} and \eqref{eq:kthmomentbound} that
 \begin{align*}
 \bar\rho(x) \geq
 \left(A \, \eta\,\left(\frac{|x|^k}{k} + I_k[\bar\rho]\right) + B|x|^2+ C[\bar\rho]\right)^{-N/k}\, .
\end{align*}
However, the lowed bound is not integrable on $\RR^N$ for $k\geq 2$, contradicting $\bar \rho \in L^1\left(\RR^N\right)$.
\end{proof}

\begin{remark}
 Condition \eqref{eq:sstates resc kpos} tells us that radially symmetric non-increasing stationary states have so-called fat tails for large $r=|x|$. More precisely, Lemma \ref{lem:kthmomentbound} shows they behave at least like $r^{-N}$ for large $r$ if $k\geq 2$, whereas $\bar\rho(r)\sim r^{-2N/k}$ for large $r>0$ and for $k<2$. This means there is a critical $k_c:=2$ and respectively a critical diffusion exponent $m_c:=(N-2)/N$ where a change of behavior occurs. 
 
 For $k<k_c$, radially symmetric non-increasing stationary states, if they exist, are integrable and mass is preserved. This resctriction on $k$ corresponds exactly to the well-known classical fast diffusion regime $m>m_c$ in the case $\chi=0$ \cite{VazquezFDE}, where mass escapes to the far field but is still preserved. In our case, the behaviour of the tails is dominated by the non-linear diffusion effects even for $\chi>0$ as for the classical fast-diffusion equation when $m>m_c$. 
 
 If diffusion is too fast, i.e. $k>k_c$ and $m<m_c$, then no radially symmetric non-increasing stationary states of the rescaled equation \eqref{eq:KSresc} exist as stated in Theorem \ref{thm:noEsstates resc}. It is well known that mass escapes to infinity in the case of the classical fast diffusion equation ($\chi=0$) and integrable $L^\infty$-solutions go extinct in finite time (for a detailed explanation of this phenomenon, see \cite[Chapter 5.5]{VazquezFDE}). It would be interesting to explore this in our case.
\end{remark}

\begin{remark}  
  If $N\geq 2$ and $k\in[k^*,2)$ with
  \begin{equation}\label{kthmomentcond}
  k^*(N):=  -\dfrac N2+\sqrt{\frac{N^2}{4}+2N} \quad \in [1,2)\, ,
 \end{equation}
 then radially symmetric non-increasing solutions $\bar \rho \in \mX$ to equation \eqref{eq:sstates resc kpos} have unbounded $k$th moment.
  Indeed, assuming for a contradiction that $|x|^k\bar\rho\in L^1 \left(\RR^N\right)$. It then follows from \eqref{eq:sstates resc kpos} and \eqref{eq:kthmomentbound} that 
\begin{align*}
 |x|^k \bar\rho(x) 
 \geq  |x|^k \left(A \, \eta\,\left(\frac{|x|^k}{k} + I_k[\bar\rho]\right) + B|x|^2+ C[\bar\rho]\right)^{-N/k}
\end{align*}
a.e. on $\RR^N$, and the right-hand side is integrable only in the region $k^2+Nk-2N<0$. This condition yields \eqref{kthmomentcond}.
\end{remark}

\begin{proposition}[Necessary Condition for Global Minimisers]\label{prop:charac min kpos}
For $k\in(0,N)$, let $\rho$ be a global minimiser of $\mFr$ in $\mY_{2,k}$.
Then for any $\chi>0$, $\rho$ is continuous inside its support and satisfies
 \begin{align}
   \rho(x)^{-k/N} &= A W_k\ast \rho(x) +B|x|^2+ D_{k,\resc}[\rho] \quad \text{a.e. in}\, \,  \supp(\rho)\,,\label{eq:min resc kpos 1}\\
   \rho(x)^{-k/N} &\geq A W_k\ast \rho(x)+B|x|^2 + D_{k,\resc}[\rho] \quad \text{a.e. in}\, \, \RR^N\, .\label{eq:min resc kpos 2}
 \end{align}
Here, constants $A,B$ are given by \eqref{ab} and
\begin{align*}
D_{k, \resc}[\rho] :=\, -4B \mFr[\rho]+B \mV[\rho]-\left(\frac{N+k}{N-k}\right)\int_{\RR^N}\bar\rho^m(x)\, dx \, .
\end{align*}
Moreover, radially symmetric non-increasing global minimisers in $\mY_{2.k}$ are supported on the whole space, and so in that case \eqref{eq:min resc kpos 1} holds true in $\RR^N$.
\end{proposition}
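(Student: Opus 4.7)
The strategy is to follow the template of Proposition \ref{prop:charac min}(ii), with the modifications forced by the fact that now $m=1-k/N \in (0,1)$, so the coefficient $\frac{m}{N(m-1)}=-\frac{N-k}{Nk}$ is negative and $\rho^{m-1}=\rho^{-k/N}$ blows up where $\rho$ vanishes. First I would derive the Euler--Lagrange identity \eqref{eq:min resc kpos 1} on $\supp(\rho)$ by a variational argument, then deduce \eqref{eq:min resc kpos 2} on $\RR^N$ and continuity inside the support, and finally handle the subtler claim that radially symmetric non-increasing minimisers are supported on all of $\RR^N$.

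For the Euler--Lagrange derivation, pick $\psi\in C_c^\infty(\RR^N)$ with $\psi(-x)=\psi(x)$ and set $\varphi(x) = \rho(x)\bigl(\psi(x) - \int\psi\rho\bigr)$. For $\eps\in(0,(2\|\psi\|_\infty)^{-1})$ one checks, exactly as in the proof of Proposition \ref{prop:charac min}(i), that $\rho + \eps\varphi \in \mY_{2,k}$ (normalisation and zero centre of mass are preserved; radiality of $\rho$ takes care of the latter). The crucial observation is that $\supp(\rho+\eps\varphi)=\supp(\rho)$, which sidesteps the blow-up of $\rho^{m-1}$ off the support. Writing
\[
\mG_\eps(t) = \int_{\supp(\rho)} \rho^m \bigl(1 + t\eps(\psi-\tfrac{}{}\textstyle\int\psi\rho)\bigr)^{m-1}\bigl(\psi - \textstyle\int\psi\rho\bigr)\,dx ,
\]
the integrand is uniformly dominated by $C\|\psi\|_\infty \rho^m\in L^1$ for $\eps$ small, so dominated convergence yields the stationarity identity $\int\mTr[\rho]\varphi\,dx=0$. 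The usual rearrangement produces $\rho(\mTr[\rho]-D)=0$ a.e.\ with $D=\int\mTr[\rho]\rho\,dx = 2\mFr[\rho]+\tfrac{m-2}{N(m-1)}\|\rho\|_m^m-\tfrac12\mV[\rho]$. Using $m=1-k/N$, solving for $\rho^{-k/N}$ gives exactly \eqref{eq:min resc kpos 1} with the claimed constant $D_{k,\resc}[\rho]$.

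The inequality \eqref{eq:min resc kpos 2} is then automatic: on $\{\rho>0\}$ it coincides with \eqref{eq:min resc kpos 1}, and where $\rho(x)=0$ one has $\rho(x)^{-k/N}=+\infty$ so the inequality holds trivially (and vacuously). For continuity inside the support, the kernel $W_k(x)=|x|^k/k$ is continuous for $k>0$, and since $\rho\in\mY_{2,k}$ implies $|y|^k\rho(y)\in L^1$, the bound \eqref{eq:kconvex} combined with dominated convergence shows $W_k\ast\rho \in C(\RR^N)$. The right-hand side of \eqref{eq:min resc kpos 1} is thus continuous on $\RR^N$, and on $\{\rho>0\}$ it forces $\rho = (AW_k\ast\rho+B|x|^2+D_{k,\resc}[\rho])^{-N/k}$ to be continuous.

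The main obstacle, and the new phenomenon specific to the fast-diffusion regime, is the last claim. Assume for contradiction that $\rho$ is radially symmetric non-increasing with $\supp(\rho)=\overline{B(0,R)}$ for some $R<\infty$. Choose a non-negative radial bump $\eta_\delta\in C_c^\infty(\RR^N)$ supported in the annulus $A_\delta=B(0,R+2\delta)\setminus B(0,R+\delta)$, and consider the perturbation $\varphi=\eta_\delta-(\int\eta_\delta)\rho$. Radiality of $\eta_\delta$ and $\rho$ ensures zero centre of mass; mass is preserved by construction; and $\rho+\eps\varphi\geq0$ for small $\eps$ since $\rho+\eps\varphi=(1-\eps\int\eta_\delta)\rho$ on $B(0,R)$ and $\rho+\eps\varphi=\eps\eta_\delta$ on $A_\delta$. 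The decisive computation is the expansion of $\mFr[\rho+\eps\varphi]-\mFr[\rho]$: on $B(0,R)$ the perturbation is a uniform rescaling of $\rho$, and every term of $\mFr$ contributes an $O(\eps)$ correction; but on $A_\delta$, where $\rho\equiv 0$, one has
\[
\mU_m[\rho+\eps\varphi]\big|_{A_\delta} = \frac{1}{N(m-1)}\int_{A_\delta}(\eps\eta_\delta)^m\,dx = \frac{\eps^m}{N(m-1)}\int_{A_\delta}\eta_\delta^m\,dx ,
\]
a strictly negative term of order $\eps^m$ since $m-1<0$. Because $\eps^m\gg\eps$ as $\eps\to 0^+$, this contribution dominates all $O(\eps)$ terms from the diffusion on $B(0,R)$, the interaction, and the confinement, so $\mFr[\rho+\eps\varphi]<\mFr[\rho]$ for $\eps$ sufficiently small, contradicting minimality. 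Hence $R=\infty$ and \eqref{eq:min resc kpos 1} is valid a.e.\ on $\RR^N$.
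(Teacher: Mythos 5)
Your proposal is correct, and while the Euler--Lagrange part follows the paper's template, your proof of the full-support claim takes a genuinely different route. For \eqref{eq:min resc kpos 1} the paper simply says the derivation is ``analogous to Proposition \ref{prop:charac min}'', and you have made explicit the one point where this needs care in the fast-diffusion regime: the multiplicative perturbation $\varphi=\rho(\psi-\int\psi\rho)$ preserves $\supp(\rho)$, so $(\rho+t\eps\varphi)^{m-1}=\rho^{m-1}(1+t\eps(\psi-\int\psi\rho))^{m-1}$ stays dominated by $C\rho^{m}\cdot\rho^{-1}\cdot\rho = C\rho^m\in L^1$ despite $m-1<0$; this is exactly why the dominated-convergence step survives. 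Your reading of \eqref{eq:min resc kpos 2} as automatic under the convention $0^{-k/N}=+\infty$ is the honest one: the paper's exterior perturbation $\varphi=\psi-\rho\int\psi$, used in Proposition \ref{prop:charac min}(ii), produces a first variation that diverges like $\eps^{m-1}$ off the support when $m<1$, so the ``analogous'' derivation is delicate there, and your route avoids it. The real divergence is in the last claim. The paper (arguing as in Theorem \ref{thm:noEsstates} and Theorem \ref{thm:noEmins}) works from the necessary conditions: continuity at the origin lets one evaluate \eqref{eq:min resc kpos 1} at $x=0$ to get $AI_k[\rho]+D_{k,\resc}[\rho]>0$, and this is combined with the two-sided convolution bounds of Lemma \ref{lem:kthmomentbound} to exclude a finite support radius. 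You instead insert mass $\eps\eta_\delta$ in an exterior annulus, compensated multiplicatively, and exploit the singular response $\frac{\eps^m}{N(m-1)}\int\eta_\delta^m<0$ of the fast-diffusion entropy, which beats every $O(\eps)$ term since $m<1$ and $N(m-1)=-k<0$. This is correct and arguably more robust: it does not need the behaviour of $\rho$ at $\partial B(0,R)$ (for minimisers, unlike stationary states, continuity is only known \emph{inside} the support, so a Theorem \ref{thm:noEsstates}-style ``$\rho\to0$ at the boundary'' argument is unavailable), it does not use Lemma \ref{lem:kthmomentbound} at all, and it actually shows that no minimiser, radial or not, can vanish on an open set --- radiality enters only through the ball structure of the support and the centring of $\eta_\delta$.

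One small imprecision to fix in the first paragraph: you justify the zero centre of mass of $\rho+\eps\varphi$ ``by radiality of $\rho$'', but radial symmetry of the minimiser is not known (and, unlike Proposition \ref{prop:charac min}, is not asserted by this proposition); for a general $\rho\in\mY_{2,k}$ one has $\int x\varphi\,dx=\int x\rho\psi\,dx$, which need not vanish even for symmetric $\psi$. The cleanest repair is to note that the centre-of-mass constraint is inactive for $\mFr$: translating any density to its centre of mass leaves $\mU_m$ and $\mW_k$ unchanged and strictly decreases $\mV$, so a constrained minimiser also minimises over the class without the centre-of-mass condition, and the perturbation need not preserve it (alternatively, add a corrector $\rho\,(b\cdot x)$ with a suitable $b\in\RR^N$). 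The paper elides the same point behind ``analogously'', so this is a shared, easily repaired gap rather than a flaw specific to your argument.
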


\begin{proof} The proof of \eqref{eq:min resc kpos 1} and \eqref{eq:min resc kpos 2} follows analogously to Proposition \ref{prop:charac min}. Further, since $W_k$ is continuous and $\rho \in L^1\left(\RR^N\right)$, it follows from \eqref{eq:min resc kpos 1} that $\rho$ is continuous inside its support being a continuous function of the convolution between $W_k$ and $\rho$. Now, if $\rho$ is radially symmetric non-increasing, we argue as for Theorem \ref{thm:noEmins} to conclude that $\supp(\rho)=\RR^N$.
\end{proof}

\begin{remark}\label{rmk:kregimes min}
Just like \eqref{eq:sstates resc kpos}, condition \eqref{eq:min resc kpos 1} provides the behaviour of the tails for radially symmetric non-increasing global minimisers of $\mFr$ using the bounds in Lemma \ref{lem:kthmomentbound}. In particular, they have unbounded $k$th moment for any $\chi>0$ if $k\geq k^*$ with $k^*$ given by \eqref{kthmomentcond}, and they are not integrable for $k>k_c:=2$. Further, their second moment is bounded and $\rho^m\in L^1\left(\RR^N\right)$ if and only if  $k < 2N/(2+N)$. Note that
 $$
 \frac{2N}{2+N} < k^*(N) < k_c\, .
 $$
 Hence, radially symmetric non-increasing global minimisers with finite energy $\mFr[\rho]<\infty$ can only exist in the range $0<k<2N/(2+N)$. For $k\geq \tfrac{2N}{2+N}$, one may have to work with relative entropies instead.
\end{remark}

Apart from the Euler-Lagrange condition above, we have very little information about global minimisers of $\mFr$ in general, and it is not known in general if solutions to \eqref{eq:min resc kpos 1}-\eqref{eq:min resc kpos 2} exist. Thus, we use a different approach here than in the porous medium regime, showing existence of stationary states to \eqref{eq:KSresc} directly by a compactness argument.
Let us define the set
 \begin{equation*}
 \bar\mX := \left\{ \rho \in C(\RR^N) \cap \mX :  \, \int|x|^k\rho(x)\, dx<\infty\, ,\, \rho^*=\rho \, ,\, \lim_{r\to \infty} \rho(r) =0 \right\}\, ,
 \end{equation*}
where $\rho^*$ denotes the symmetric decreasing rearrangement of $\rho$.
\begin{theorem}[Existence of Stationary States]\label{thm:existence sstates}
 Let $\chi>0$ and $k \in (0,1] \cap (0,N)$. Then there exists a stationary state $\bar \rho \in \bar \mX$ for the rescaled system \eqref{eq:KSresc}.
\end{theorem}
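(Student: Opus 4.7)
My plan is to apply Schauder's fixed-point theorem to the nonlinear integral equation \eqref{eq:sstates resc kpos}, which characterises radially symmetric non-increasing stationary states.

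Given $\rho \in \bar\mX$ with finite $k$th moment, define the candidate operator
\[
T[\rho](x) = \left( A\, W_k \ast \rho(x) + B|x|^2 + C[\rho]\right)^{-N/k},
\]
where the real constant $C[\rho]$ is uniquely determined by the mass constraint $\int T[\rho] = 1$. The existence and uniqueness of $C[\rho]$ follows because $W_k \ast \rho(x) \geq I_k[\rho]$ by Lemma \ref{lem:kthmomentbound}, so that the integrand is positive as soon as $C>-A I_k[\rho]$, and the map $C \mapsto \int(AW_k\ast\rho+B|x|^2+C)^{-N/k}\,dx$ is continuous, strictly decreasing, diverging at the endpoint $-AI_k[\rho]$ and tending to $0$ as $C\to\infty$. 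Since $A\,W_k\ast\rho + B|x|^2$ is a radially non-decreasing continuous function (the radial monotonicity is precisely the content of the proof of Lemma \ref{lem:kthmomentbound}), $T[\rho]$ is automatically continuous, radially symmetric, non-increasing and tends to $0$ at infinity.

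The crucial a priori bounds exploit the condition $k \leq 1$: then $\eta=\max\{1,2^{k-1}\}=1$, so the inequalities in Lemma \ref{lem:kthmomentbound} collapse to
\[
I_k[\rho] \;\leq\; W_k \ast \rho(x) \;\leq\; I_k[\rho] + \tfrac{|x|^k}{k}.
\]
Setting $a := AI_k[\rho]+C[\rho]$, the normalisation $\int T[\rho] = 1$ is squeezed between
\[
\int_{\RR^N}\!\left(a + B|x|^2 + A\tfrac{|x|^k}{k}\right)^{-N/k}\! dx \;\leq\; 1 \;\leq\; \int_{\RR^N}\!\left(a + B|x|^2\right)^{-N/k}\! dx.
\]
The right-hand integral is finite precisely because $k<2$, and after the scaling $x = (a/B)^{1/2}y$ it equals $a^{N/2-N/k}B^{-N/2}\int(1+|y|^2)^{-N/k}dy$. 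Since the exponent $N/2-N/k$ is negative, this yields an upper bound $a\leq a_1$. The left inequality, after the same rescaling, gives a lower bound $a\geq a_0>0$ (the left integrand would blow up as $a\downarrow0$ since $|x|^{-N}$ is non-integrable near the origin). Consequently
\[
\|T[\rho]\|_\infty = T[\rho](0) = a^{-N/k} \in [a_1^{-N/k}, a_0^{-N/k}],
\qquad
I_k[T[\rho]] \leq \int |x|^k(a+B|x|^2)^{-N/k} dx \leq M_2,
\]
where $M_2<\infty$ because $k\leq 1<k^*$ makes $|y|^k(1+|y|^2)^{-N/k}$ integrable.

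Introduce the convex set
\[
K := \left\{ \rho \in \bar\mX : \rho(0) \leq M_1,\; I_k[\rho]\leq M_2\right\},
\qquad M_1:=a_0^{-N/k},
\]
so that $T(K)\subseteq K$. To invoke Schauder, I would show $K$ is compact in $L^1(\RR^N)$ and that $T:K\to K$ is continuous. Compactness: any sequence in $K$ is radially non-increasing and uniformly bounded, so Helly's selection theorem extracts a pointwise a.e. convergent subsequence with a radially non-increasing limit; the uniform $I_k$ bound produces tightness via $\int_{|x|>R}\rho \leq M_2 R^{-k}$, and together with the $L^\infty$ bound this yields $L^1$ convergence by dominated convergence. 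Continuity of $T$: if $\rho_n\to\rho$ in $L^1$ with uniform bounds above, then $W_k\ast\rho_n \to W_k\ast\rho$ locally uniformly (the convolution kernel is locally integrable and grows at worst as $|x|^k$), $I_k[\rho_n]\to I_k[\rho]$ by dominated convergence on the tightness-controlled tails, and the implicit function-style argument for $C[\rho]$ (monotonicity in $C$ of a continuous integral) transfers this convergence to $C[\rho_n]\to C[\rho]$, whence $T[\rho_n]\to T[\rho]$ in $L^1$. Schauder then produces a fixed point $\bar\rho\in K$, which by construction satisfies \eqref{eq:sstates resc kpos} and is therefore a stationary state of \eqref{eq:KSresc} in the sense of Definition \ref{def:sstates resc}, with the Definition's regularity requirement automatic for $k>0$.

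The main obstacle is the uniform lower bound $a\geq a_0>0$, which is the subtle half of the a priori estimate: it rests entirely on the identity $\eta=1$ for $k\leq 1$, which forces the upper and lower bounds on $W_k\ast\rho$ to differ only by the spatially vanishing term $|x|^k/k$ — any larger gap would allow $a$ to drift to zero and destroy the $L^\infty$ bound. This is also precisely the place where the hypothesis $k\leq 1$ is unavoidable in the argument, in agreement with the non-existence phenomena recorded in Theorem \ref{thm:noEsstates resc} and in the remarks following it.
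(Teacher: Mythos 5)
Your overall architecture coincides with the paper's: the same fixed-point operator $T\rho=(A W_k\ast\rho+B|x|^2+C[\rho])^{-N/k}$, the same determination of $C[\rho]$ by strict monotonicity of the mass in $C$, the same squeeze $0<\underline{\delta}\leq A I_k[\rho]+C[\rho]\leq \overline{\delta}$ (your $a\in[a_0,a_1]$, the paper's \eqref{deltabound}) resting on Lemma \ref{lem:kthmomentbound} with $\eta=1$ for $k\leq 1$, and Schauder to conclude. Where you diverge is the compactness mechanism: the paper establishes uniform decay \eqref{decay+Tbound} and the equi-H\"older estimate \eqref{TH} and applies an Arz\'ela--Ascoli variant in the uniform topology, while you work in $L^1$ with Helly's selection theorem. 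That substitution is legitimate in principle and avoids the H\"older computation, but your continuity step contains a genuine gap.

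The gap: on your set $K=\{\rho(0)\leq M_1,\ I_k[\rho]\leq M_2\}$, neither $\rho\mapsto I_k[\rho]$ nor $\rho\mapsto W_k\ast\rho$ (locally uniformly) is continuous for the $L^1$ topology. A uniform bound on $I_k$ gives mass-tightness via Chebyshev, but \emph{not} tightness of the $k$th moment itself: radial monotonicity plus $I_k[\rho]\leq M_2$ only yields $\rho(r)\lesssim r^{-N-k}$, and $|x|^k\,|x|^{-N-k}$ is borderline non-integrable at infinity. Concretely, taking $\rho_n=\rho_\infty+c_n\ind_{B(0,R_n)}$ (mollified and renormalised) with $c_n=R_n^{-N-k}$, one has $\rho_n\to\rho_\infty$ in $L^1$ and $\rho_n\in K$, yet $I_k[\rho_n]\to I_k[\rho_\infty]+\sigma_N/(k(N+k))$ and $W_k\ast\rho_n\to W_k\ast\rho_\infty+d$ locally uniformly with $d>0$; so your claims ``$W_k\ast\rho_n\to W_k\ast\rho$ locally uniformly'' and ``$I_k[\rho_n]\to I_k[\rho]$ by dominated convergence'' both fail, and with them your deduction $C[\rho_n]\to C[\rho]$. (The composite map $T$ may in fact remain continuous, since only the combination $A\,W_k\ast\rho+C[\rho]$ enters and the constant drift $Ad$ is absorbed by the normalisation of $C$, but that is not what your argument shows.) The repair is already contained in your own estimates: since $T\rho\leq M(x)=(\underline{\delta}+B|x|^2)^{-N/k}$ pointwise for every admissible $\rho$, replace $K$ by the convex, $T$-invariant set $\tilde K:=\{\rho\in K:\rho\leq M\ \text{a.e.}\}$. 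Because $|x|^kM(x)\in L^1\left(\RR^N\right)$ exactly when $k<k^*$ --- note that for $N=1$ the hypothesis $k\in(0,1]\cap(0,N)$ forces $k<1=k^*(1)$, and at $k=1$, $N=1$ this integral log-diverges, so your parenthetical ``$k\leq 1<k^*$'' needs that case distinction --- the envelope dominates $|x|^k\rho_n$ uniformly on $\tilde K$, restoring both the continuity of $I_k$ and the pointwise convergence of the convolutions, after which your monotonicity argument for $C[\rho]$ and Schauder's theorem go through. A final cosmetic point: a Helly limit of continuous functions need not be continuous, so either drop continuity from the definition of the invariant set and recover it a posteriori from the fixed-point identity $\bar\rho=T\bar\rho$, or verify closedness in the uniform topology as the paper effectively does.
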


Here, decay at infinity of the equilibrium distribution is a property we gain automatically thanks to the properties of the equation, but we choose to include it here a priori. Note that the restriction $k \leq 1$ arises from Lemma \ref{lem:kthmomentbound} as we need the upper and lower bounds in \eqref{eq:kthmomentbound} to scale with the same factor ($\eta=1$). By Corollary \ref{cor:sstates kpos}, this restriction on $k$ also means that we are in the range where stationary states have bounded $k$th moment since $(0,1] \cap (0,N) \subset (0,k^*)$.

\begin{proof}
 Corollary \ref{cor:sstates kpos} suggests that we are looking for a fixed point  of the operator $T: \mX \to \mX$,
 \begin{equation*}\label{opT}
  T \rho (x) :=  \left(A \frac{|x|^k}{k} \ast \rho + B|x|^2 + C\right)^{-N/k}\, .
 \end{equation*}
 For this operator to be well-defined, we need to be able to choose a constant $C=C[\rho]$ such that $\int_\RR T \rho(x)\, dx=1$. To show that this is indeed the case, let us define for any $\alpha >0$,
 \begin{align*}
 w(\alpha):= \int_{\RR^N} \left(\alpha + A \frac{|x|^k}{k} + B|x|^2\right)^{-N/k}\, dx\, , \qquad
 W(\alpha):= \int_{\RR^N} \left(\alpha + B|x|^2\right)^{-N/k}\, dx\, .
 \end{align*}
 Note that $w$ and $W$ are finite and well-defined since $k <2$. Furthermore, both $w$ and $W$ are continuous, strictly decreasing to zero as $\alpha$ increases, and blow-up at $\alpha=0$. Hence, we can take inverses $\underline{\delta}:= w^{-1}(1)>0$ and $\overline{\delta}:=W^{-1}(1)>0$. Fixing some $\rho \in \bar \mX$ and denoting by $M\left(\rho,C\right)$ the mass of $T\rho$, 
 we obtain from Lemma \ref{lem:kthmomentbound},
 $$
 M\left(\rho,\underline{\delta}-AI_k[\rho]\right)\geq 1\, , \qquad
 M\left(\rho,\overline{\delta}-AI_k[\rho]\right) \leq 1\, .
 $$
 Since $M\left(\rho,\cdot\right)$ is continuous and strictly decreasing on the interval $\left[\underline{\delta}-AI_k[\rho] , \overline{\delta}-AI_k[\rho]\right]$, we conclude that there exists $C[\rho]$ with $\underline{\delta}-A I_k[\rho] \leq C[\rho] \leq\overline{\delta}-A I_k[\rho]$ and $M\left(\rho,C[\rho]\right)=1$. From Lemma \ref{lem:kthmomentbound}, we obtain for all $x \in \RR^N$,
 $$
 \left(A I_k[\rho] + C[\rho]+ A \frac{|x|^k}{k} + B|x|^2\right)^{-N/k}
 \leq T\rho(x) \leq 
 \left(A I_k[\rho] + C[\rho]+ B|x|^2\right)^{-N/k},
 $$
 and integrating over $\RR^N$,
 $$
 w\left(A I_k[\rho] + C[\rho]\right)\leq 1 \leq W\left(A I_k[\rho] + C[\rho]\right)\, ,
 $$
 implying
 \begin{equation}\label{deltabound}
  0< \underline{\delta} \leq A I_k[\rho] + C[\rho] \leq\overline{\delta}< \infty.
 \end{equation}
As a consequence, we have a pointwise estimate for $T\rho$,
\begin{equation}\label{mbounds}
 m(x) \leq T\rho(x) \leq M(x),
\end{equation}
where we define
\begin{equation}\label{mbounddef}
m(x):= \left(\overline{\delta} + A \frac{|x|^k}{k} + B|x|^2\right)^{-N/k}, \qquad
M(x):= \left(\underline{\delta} +  B|x|^2\right)^{-N/k}\, .
\end{equation}

We are now ready to look for a fixed point of $T$. Applying $T$ to $\bar \mX$, we are able to make use of a variant of the Arz\'ela-Ascoli Theorem to obtain compactness. The key ingredients are the bounds in Lemma \ref{lem:kthmomentbound} and the uniform estimate \eqref{deltabound} since they allow us to derive the pointwise estimate \eqref{mbounds}, which gives decay at infinity and uniform boundedness of $T\rho$:
\begin{align}\label{decay+Tbound}
T \rho(x) \leq  \left(\underline{\delta} +B|x|^2\right)^{-N/k}
\leq \min\left\{B^{-N/k}\, |x|^{-2N/k},\, \underline{\delta}^{-N/k}\right\}\, .
\end{align}
Further, we claim $T\rho$ is $k$-H\"older continuous on compact balls $K_R:= \overline{B(0,R)} \subset \RR^N$, $R>0$,
\begin{align}
|T\rho(x_1)-T\rho(x_2)| \leq C_{R,N,k}\, |x_1-x_2|^k, \label{TH}
\end{align}
with $k$-H\"older semi-norm
\begin{align}
\left[ T\rho (\cdot)\right]_k =C_{R,N,k} = \frac{N}{k}\, \underline{\delta}^{-(1+N/k)}\, \left( \frac{A}{k}+3BR^{2-k}\right) >0\label{Hcoeff}\,.
\end{align}
To see this, let $G(x):= A\frac{|x|^k}{k} \ast \rho + B|x|^2 + C[\rho]$ and $u(G):=G^{-N/k}$ so that we can write 
\begin{align*}
 |T\rho(x_1)-T\rho(x_2)| 
 &= |G(x_1)^{-N/k} - G(x_2)^{-N/k}| \leq \mathrm{Lip}\left(u\right) |G(x_1)-G(x_2)|\\
 &\leq \mathrm{Lip}\left(u\right)  
 \left( A \,\left[ \frac{|\cdot|^k}{k} \ast \rho(\cdot)\right]_k + B\, \left[ |\cdot|^2\right]_k\right)
 \, |x_1-x_2|^k,
\end{align*}
where $\mathrm{Lip}(\cdot)$ denotes the Lipschitz constant on a suitable domain specified below. Indeed, $G(x)$ satisfies the inequality
$
0<\underline{\delta} \leq G(x) \leq A\tfrac{|x|^k}{k}+ B|x|^2+\overline{\delta}
$
for all $x \in \RR^N$ by \eqref{eq:kthmomentbound} and \eqref{deltabound}. Moreover, $G$ is $k$-H\"older continuous:
\begin{align*}\label{Hoelder1}
 \left| \frac{|x_1|^k}{k} \ast \rho(x_1) - \frac{|x_2|^k}{k} \ast \rho(x_2)\right|
 = &\dfrac1k \int_{\RR^N} \left| |x_1-y|^k-|x_2-y|^k\right| \rho(y) \, dy\notag\\
 \leq &\, \frac{|x_1-x_2|^k}{k} \, 2^{k-1} 
 \leq \, \frac{|x_1-x_2|^k}{k} 
\end{align*}
and hence $\left[ \frac{|\cdot|^k}{k} \ast \rho(\cdot)\right]_k\leq 1/k$ uniformly. Further, the $k$-H\"older semi-norm of $|x|^2$ is bounded by $3R^{2-k}$ on $K_R$: for $x,y \in K_R$, $x \neq y$ and $z :=x-y$, we have for $|z|\leq R$,
$$
\frac{\left| |x|^2-|y|^2\right|}{|x-y|^k}
\leq \frac{|z|^2+2|z|\min\{|x|,|y|\}}{|z|^k} 
\leq 3 R |z|^{1-k} 
\leq 3R^{2-k},
$$
and similarly for $|z|\geq R$,
$$
\frac{\left| |x|^2-|y|^2\right|}{|x-y|^k}
\leq \frac{2R^2}{R^k} = 2R^{2-k},
$$
and so $\left[ |\cdot|^2\right]_k \leq 3R^{2-k}$.
We are left to estimate the Lipschitz coefficient $\mathrm{Lip}\left(u\right)$ for $G \in [\underline{\delta}, \infty)$. Indeed, we can calculate it explicitly using the mean value theorem,
$$
|u(G_1)-u(G_2)| \leq \left(\max_{\xi \in [\underline{\delta}, \infty)} |u'(\xi)| \right) |G_1-G_2|,
$$
and so we have 
$$
\mathrm{Lip}\left( u \right) \leq \max_{\xi \in [\underline{\delta}, \infty)} |u'(\xi)| 
=\dfrac Nk \underline{\delta}^{-(1+N/k)}.
$$
This concludes the proof of H\"older continuity of $T\rho$ on $K_R$, \eqref{TH}-\eqref{Hcoeff}. Since \linebreak $\int_{\RR^N} |x|^k M(x) \, dx < \infty$ if $k \in (0,1]$, it follows from \eqref{mbounds} that $T\rho$ has bounded $k$th moment. Together with the estimate of the tails \eqref{decay+Tbound}, we have indeed $T\bar \mX\subset \bar \mX$, and so $T$ is well-defined.
We conclude that the operator $T: \bar \mX \to \bar \mX$ is compact by a variant of the Arz\'ela-Ascoli Theorem using uniform decay at infinity and uniform boundedness \eqref{decay+Tbound} together with equi-H\"older-continuity \eqref{TH}. Continuity of the map $T: \bar \mX \to \bar \mX$ can be analogously checked since the convolution with $W_k$ is a continuous map from $\bar \mX$ to $C(\RR^N)$ together with a similar argument as before for the H\"older continuity of $T \rho$. Additionally, we use that $C[\rho]$ is continuous in terms of $\rho$ as $M(\rho,C)$, the mass of $T\rho$, is a continuous function in terms of both $\rho$ and $C$ and strictly decreasing in terms of $C$, and hence $C[\rho]=M^{-1}(\rho,1)$ is continuous in terms of $\rho$. Here, $M^{-1}(\rho,\cdot)$ denotes the inverse of $M(\rho,\cdot)$.

Finally, by Schauder's fixed point theorem there exists $\bar \rho \in \bar \mX$ such that $T\bar \rho=\bar \rho$. In other words, $\bar \rho$ satisfies relation \eqref{eq:sstates resc kpos} on $\RR^N$. By continuity and radial monotonicity, we further have $\bar \rho \in L^\infty\left(\RR^N\right)$ from which we deduce the required regularity properties using $\supp(\bar\rho)=\RR^N$ and Lemma \ref{lem:regS}. 
We conclude that $\bar \rho$ is a stationary state of the rescaled equation according to Definition  \ref{def:sstates resc}.
\end{proof}

\begin{remark}
 Having established existence of radially symmetric stationary states to the rescaled equation \eqref{eq:KSresc}, it is a natural question to ask whether these stationary states correspond to minimisers of the rescaled free energy functional $\mFr$. For a stationary state $\bar \rho$ to have finite energy, we require in addition $\mV[\bar \rho]<\infty$, $\bar\rho^m \in L^1\left(\RR^N\right)$ and $|x|^k\bar\rho\in L^1\left(\RR^N\right)$, in which case $\bar \rho \in \mY_{2,k}$. As noted in Remark \ref{rmk:kregimes min}, 
 this is true if and only if $0<k < \tfrac{2N}{2+N}$. This restriction corresponds to $\tfrac{N}{2+N}<m<1$ and coincides with the regime of the fast diffusion equation ($\chi=0$) where the Barenblatt profile has second moment bounded and its $m$th power is integrable \cite{BDGV}.
\end{remark}
\begin{remark}
 In particular, the non-existence result in original variables Theorem \ref{thm:noEmins} means that there is no interaction strengths $\chi$ for which the energy functional $\mF_k$ admits radially symmetric non-increasing global minimisers. In this sense, there is no critical $\chi_c$ for $k>0$ as it is the case in the porous medium regime. Existence of global minimisers for the rescaled free energy functional $\mFr$ for all $\chi>0$ would provide a full proof of non-criticality in the fast diffusion range and is still an open problem for arbitrary dimensions $N$. We suspect that $\mFr$ is bounded below. In one dimension, one can establish equivalence between stationary states of the rescaled equation \eqref{eq:KSresc} and global minimisers of $\mFr$ by completely different methods, proving a type of reversed HLS inequality \cite{CCHCetraro}. The non-existence of a critical parameter $\chi$ is a very interesting phenomenon, which has already been observed in \cite{CL} for the one-dimensional limit case $k=1$, $m=0$.
\end{remark}

\subsection{Numerical Simulations in One Dimension}

To illustrate our analysis of the fast diffusion regime, we present numerical simulations in one dimension. We use a Jordan-Kinderlehrer-Otto (JKO) steepest descent scheme \cite{JKO,Otto} which was proposed in \cite{BCC08} for the logarithmic case $k=0$, and generalised to the porous-medium case $k\in(-1,0)$ in \cite{CG}. It corresponds to a standard implicit Euler method for the pseudoinverse of the cumulative distribution function, where the solution at each time step of the non-linear system of equations is obtained by an iterative Newton-Raphson procedure.
It can easily be extended to rescaled variables and works just in the same way in the fast diffusion regime $k\in(0,1)$. 

\begin{figure}[h!]
\begin{subfigure}{.49\textwidth}
  \centering
  \includegraphics[width=\linewidth]{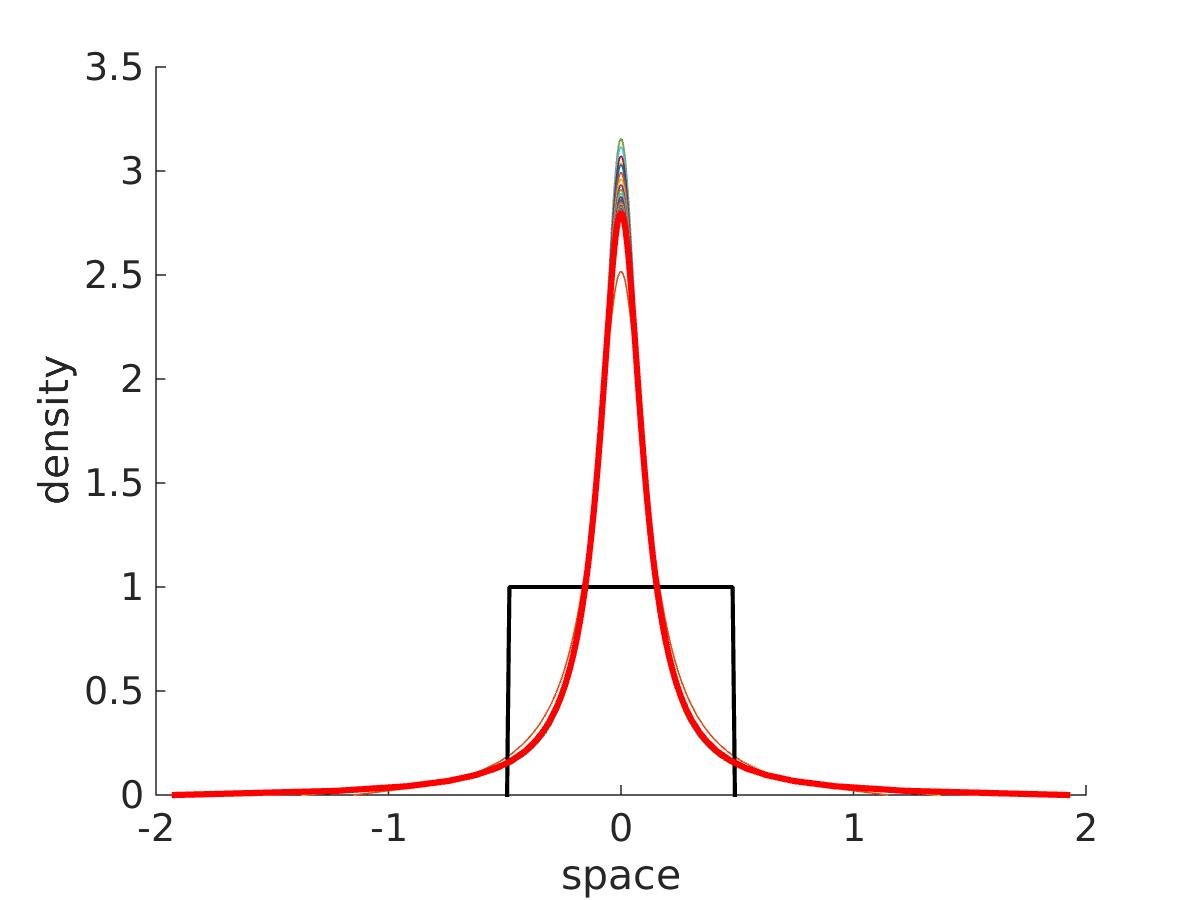}
  \caption{Density} 
  \label{fig:chi=12_k=02_resc=1_fig1a}
\end{subfigure}
\begin{subfigure}{.49\textwidth}
  \centering
  \includegraphics[width=\linewidth]{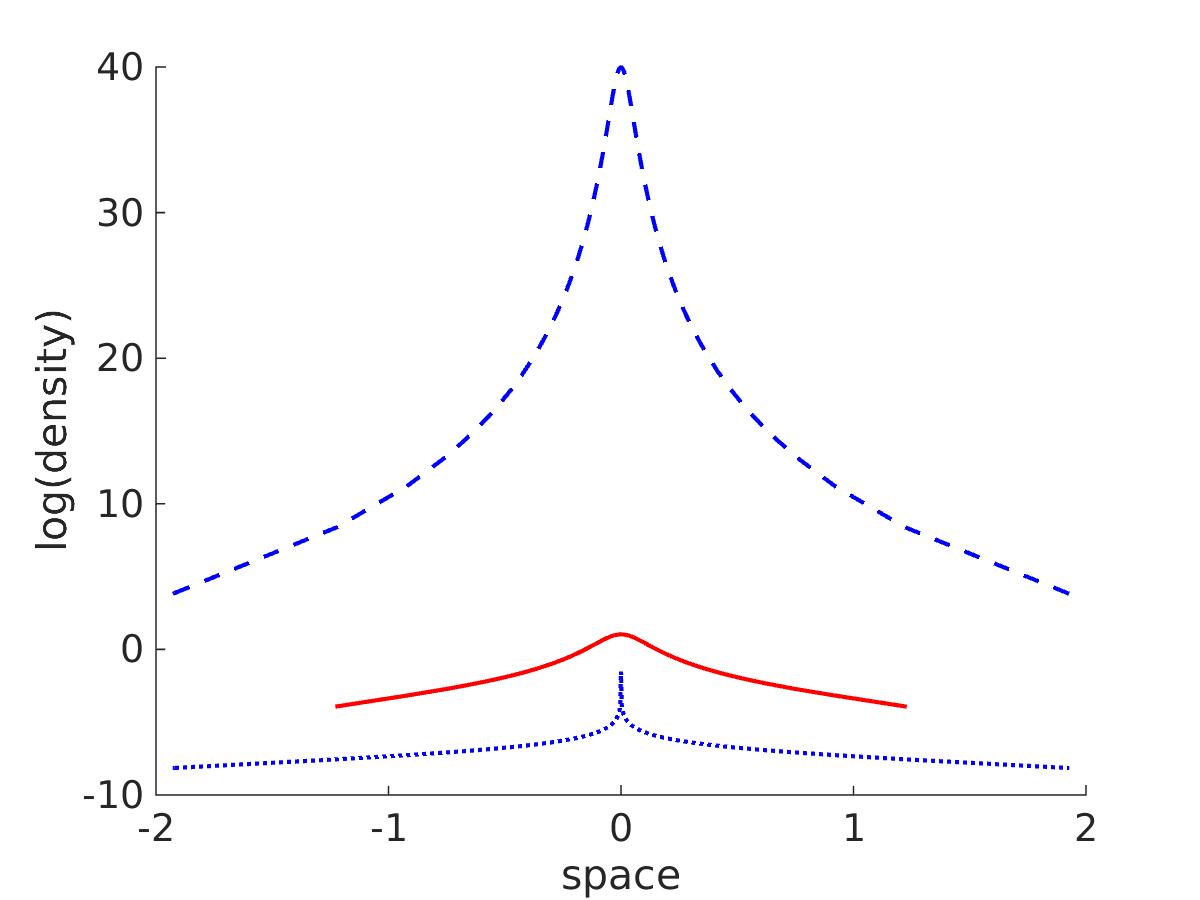}
  \caption{ $\log\left(m(x)\right) \leq \log\left(\bar\rho(x)\right) \leq \log\left( M(x)\right)$}
  \label{chi=12_k=02_resc=1_fig2b}
\end{subfigure}
\caption{Parameter choices: $\chi=1.2$, $k=0.2$. (A) Density distribution in rescaled variables: As initial data (black) we chose a characteristic supported on the centered ball of radius $1/2$, which can be seen to converge to the stationary state $\bar \rho$ (red); (B) Logplot of the density including bounds $m(x)$ (dotted blue) and $M(x)$ (dashed blue) as given in \eqref{mbounds}.}
\label{fig:chi=12_k=02_resc=1}
\end{figure}

Our simulations show that solutions in scaled variables for $k\in (0,1)$ converge always to a stationary state suggesting the existence of stationary states as discussed in the previous subsection.
Using the numerical scheme, we can do a quality check of the upper and lower bounds derived in \eqref{mbounds} for stationary states in $\bar \mX$:
\begin{equation*}
 m(x) \leq \bar\rho(x) \leq M(x)
\end{equation*}
with $m(x)$ and $M(x)$ given by \eqref{mbounddef}. Figures \ref{fig:chi=12_k=02_resc=1} and \ref{fig:chi=08_k=95_resc=1} show numerical results at two different points in the $(k,\chi)$-parameter space. For a more detailed description of the numerical scheme and a comprehensive list of numerical results, see \cite{CCHCetraro}.

\begin{figure}[h!]
\begin{subfigure}{.49\textwidth}
  \centering
  \includegraphics[width=\linewidth]{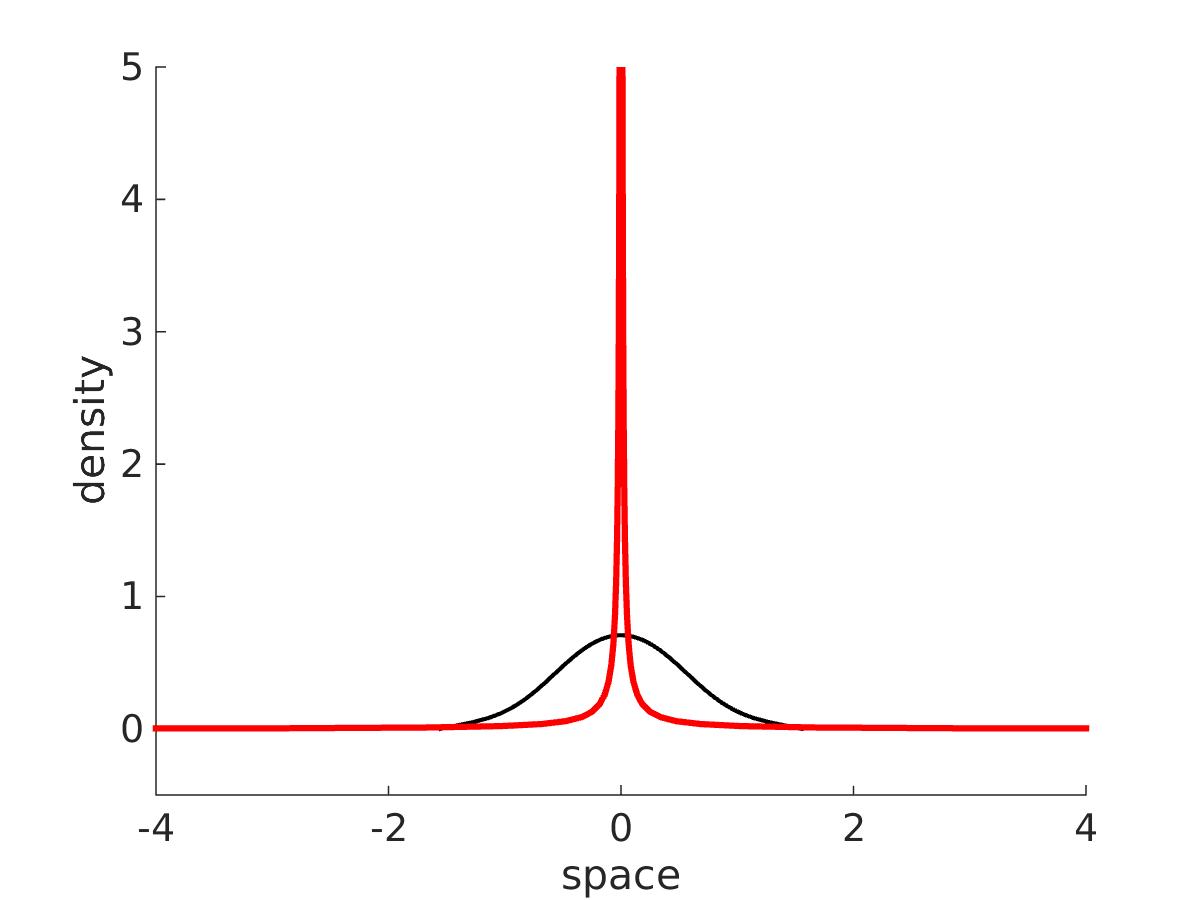}
  \caption{Density} 
  \label{fig:chi=08_k=95_resc=1_fig1}
\end{subfigure}
\begin{subfigure}{.49\textwidth}
  \centering
  \includegraphics[width=\linewidth]{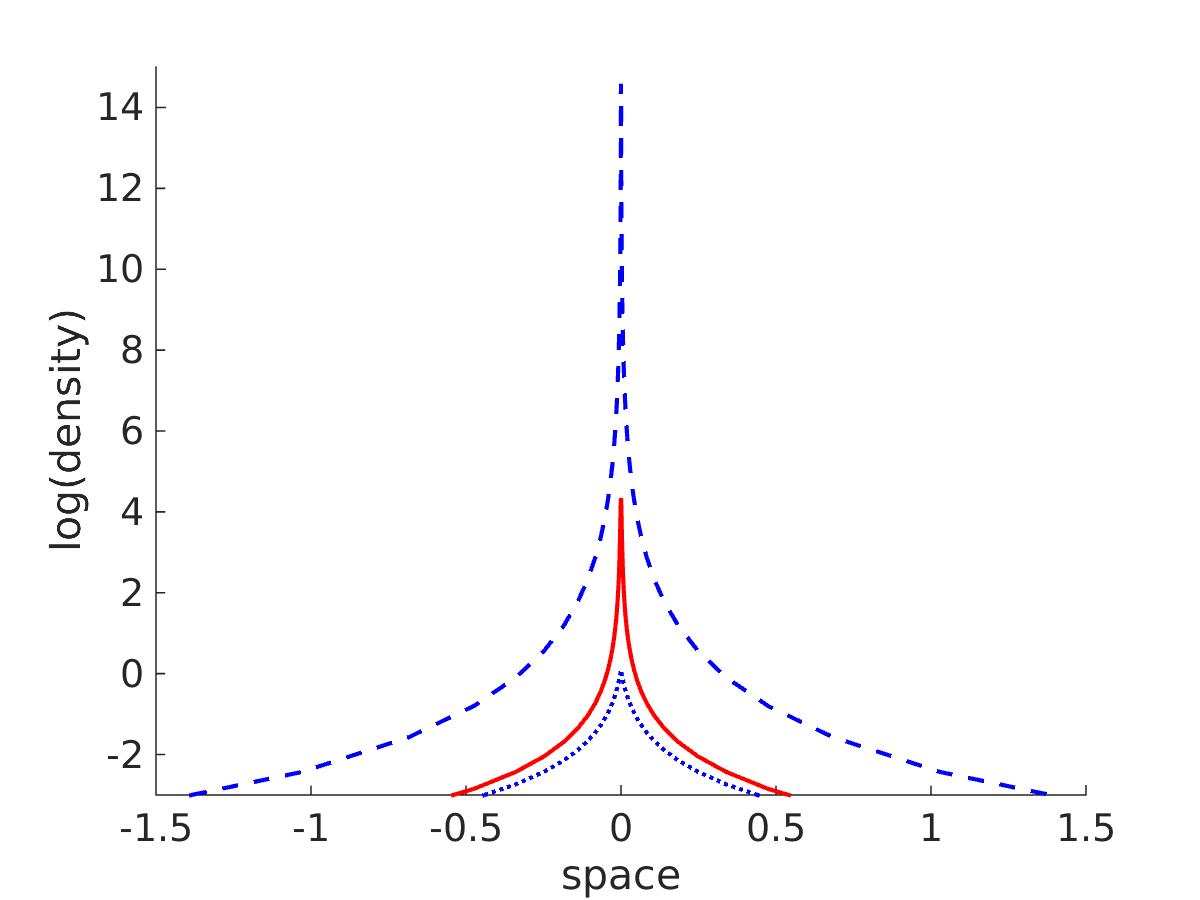}
  \caption{ $\log\left(m(x)\right) \leq \log\left(\bar\rho(x)\right) \leq \log\left( M(x)\right)$} 
  \label{chi=08_k=95_resc=1_fig2}
\end{subfigure}%
\caption{Parameter choices: $\chi=0.8$, $k=0.95$. (A) Density distribution in rescaled variables: As initial data (black) we chose a centered Gaussian distribution, which can be seen to converge to the stationary state $\bar \rho$ (red) - here, $\bar \rho$ is more peaked as $k$ is closer to $1$ and so we only display the lower part of the density plot ($\max_{x\in\RR} \bar \rho(x) = 75.7474$); (B) Logplot of the density including bounds $m(x)$ (dotted blue) and $M(x)$ (dashed blue) as given in \eqref{mbounds}.}
\label{fig:chi=08_k=95_resc=1}
\end{figure}

\begin{appendix}
\section{Properties of $\psi_k$}
 
 We are here investigating in more detail the properties of the mean-field potential gradient for global minimisers in the porous medium regime. In more than one dimension, it can be expressed in terms of hypergeometric functions. Their properties are well understood and allow us to analyse the regularity properties of global minimisers. Since global minimisers of $\mF_k$ and $\mFr$ are radially symmetric by Proposition \ref{prop:charac min}, the aim is here to find the radial formulation of $\nabla S_k$ defined in \eqref{gradS}. In one dimension, explicit expressions are available, and so we are assuming from now on that $N\geq 2$. There are three different cases: (1) The Newtonian case $k=2-N$ with $N\geq 3$, (2) the range $1-N<k<0$, $k\neq 2-N$ where $\nabla(W_k \ast \rho )$ is well defined, and (3) the singular range $-N<k\leq 1-N$ where the force field is given by a Cauchy principle value.
 
 \begin{enumerate}
  \item  In the Newtonian case $k=2-N$, we have an explicit formula for the radial derivative of the force field using Newton's Shell Theorem,
 $$
 \partial_r\left(W_{2-N}\ast \rho\right)(r)
 = M(r) r^{1-N}\, ,
 $$
 where $M(r)=\sigma_N \int_0^r \rho(s)s^{N-1}\, ds$ is the mass of $\rho$ in a ball of radius $r$. Hence, we can write
  \begin{equation*}
  \partial_r(W_{2-N} \ast \rho)(r) = r^{1-N}\int_0^\infty\psi_{2-N}\left(\frac{\eta}{r}\right)  \rho(\eta)\eta^{N-1}  \, \rd \eta\, 
 \end{equation*}
 where $\psi_{2-N}$ is defined to have a jump singularity at $s=1$,
 \begin{equation}\label{Newtonianpsi}
  \psi_{2-N}(s):=
  \begin{cases}
   1\, &\text{if}\quad 0\leq s<1\, ,\\
   0\, &\text{if}\quad s>1\, .
  \end{cases}
 \end{equation}
 \item In the range $1-N<k<0$ and $k \neq 2-N$, the mean-field potential gradient is given by
\begin{align*}
 \nabla S_k(x):=\nabla(W_k \ast \rho )(x) 
 &= \int_{\RR^N} \nabla W(x-y)\rho(y)\,\rd y\\
 &= \frac{1}{\sigma_N} \int_0^\infty \int_{\partial B(0,|y|)} \nabla W(x-y)\, \rd \sigma(y) \rho(|y|)\,\rd |y| \, .
 \end{align*}
 Denoting $|y|=\eta$, we can write for $x=re_1$,
\begin{align*}
 \frac{1}{\sigma_N}\int_{\partial B(0,|y|)} \nabla W(x-y)\, \rd \sigma(y)
 &= \frac{1}{\sigma_N}\int_{\partial B(0,|y|)} (x-y)|x-y|^{k-2}\, \rd \sigma(y)\\
 &=\left(\frac{1}{\sigma_N}\int_{\partial B(0,\eta)} e_1 \cdot (re_1-y)|r e_1 -y|^{k-2}\, \rd \sigma(y)\right)  \frac{x}{r}\\
 &=\eta^{N-1} \left(\frac{1}{\sigma_N}\int_{\partial B(0,1)}(r-\eta e_1.z)|r e_1 -\eta z|^{k-2}\, \rd \sigma(z)\right)  \frac{x}{r}\\
  &=\eta^{N-1} r^{k-1} \psi_k\left(\frac{\eta}{r}\right)  \frac{x}{r}\, ,
 \end{align*}
 where
 \begin{equation}\label{psidef}
  \psi_k\left(s\right)=
  \frac{1}{\sigma_N}\int_{\partial B(0,1)}(1-s e_1.z)|e_1 -s z|^{k-2}\, \rd \sigma(z)\, ,
  \qquad s \in [0,1)\cap (1,\infty)\, .
 \end{equation}
 By radial symmetry,
 \begin{align*}
  \nabla (W_k \ast \rho)(x) 
  &= r^{k-1} \left(\int_0^\infty\psi_k\left(\frac{\eta}{r}\right)  \rho(\eta)\eta^{N-1}  \, \rd \eta\right) \frac{x}{r}
  = \partial_r(W_k\ast \rho)(r) \frac{x}{r}\, 
 \end{align*}
 with 
 \begin{equation}\label{derivpsi0}
  \partial_r(W_k \ast \rho)(r) = r^{k-1}\int_0^\infty\psi_k\left(\frac{\eta}{r}\right)  \rho(\eta)\eta^{N-1}  \, \rd \eta\, .
 \end{equation}
\item 
In the regime $-N<k\leq1-N$ however, the derivative of the convolution with the interaction kernel is a singular integral, and in this case the force field is define as
\begin{align*}
 \nabla S_k
:= &\int_\RR \frac{x-y}{|x-y|^{2-k}}\left(\rho(y)-\rho(x)\right)\, dy\\
= &\lim_{\delta \to 0} \int_{|x-y|>\delta} \frac{x-y}{|x-y|^{2-k}}\rho(y)\, dy
= \frac{x}{r} \partial_r S_k(r) 
\end{align*}
with the radial component given by
\begin{align*}
 \partial_r S_k(r) 
 =&r^{k-1}\int_0^\infty \psi_k\left(\frac{\eta}{r}\right)\left(\rho(\eta)-\rho(r)\right)\eta^{N-1}\, d\eta\\
 =&r^{k-1} \lim_{\delta \to 0}\int_{|r-\eta|>\delta}\psi_k\left(\frac{\eta}{r}\right)\rho(\eta)\eta^{N-1}\,d\eta\, ,
\end{align*}
and $\psi_k$ is given by \eqref{psidef} on $[0,1)\cap (1,\infty)$. 
 \end{enumerate}
 
For any $-N<k<0$ with $k\neq 2-N$, we can rewrite \eqref{psidef} as
\begin{equation}\label{psi1}
 \psi_k(s)=\frac{\sigma_{N-1}}{\sigma_N} \int_0^\pi \left(1-s\cos(\theta)\right) \sin^{N-2}(\theta) A(s,\theta)^{k-2}\,d \theta\, ,
 \qquad s \in [0,1)\cap (1,\infty)
\end{equation}
with
\begin{equation*}
 A(s,\theta)= \left(1+s^2-2s\cos(\theta)\right)^{1/2}\, .
\end{equation*}
It is useful to express $\psi_k$ in terms of Gauss Hypergeometric Functions. The hypergeometric function $F(a,b;c;z)$ is defined as the power series
\begin{equation}\label{powerseriesF}
 F(a,b;c;z)=\sum_{n=0}^\infty \frac{(a)_n(b)_n}{(c)_n} \, \frac{z^n}{n!}\,
\end{equation}
for $|z|<1$ and $a,b \in \C$, $c \in \C \backslash \{\Z^- \cup \{0\}\}$, see \cite{AAR}, where $(q)_n$ is the Pochhammer symbol defined for any $q>0$, $n\in\N$ by
$$
(q)_0=1, \qquad (q)_n=\frac{(n+q-1)!}{(q-1)!}\, .
$$
We will here make use of its well known integral representation \cite{AAR}
\begin{equation*}
 F(a,b;c;z)=\frac{\Gamma(c)}{\Gamma(b)\Gamma(c-b)}\int_0^1 t^{b-1}(1-t)^{c-b-1}(1-tz)^{-a}\,dt
\end{equation*}
for $c>b>0$, $a>0$ and $|z|<1$. Moreover, if $c-a-b>0$, then $F$ is well defined at $z=1$ and satisfies
$$
F(a,b;c;1)=\frac{\Gamma(c)\Gamma(c-a-b)}{\Gamma(c-a)\Gamma(c-b)}\, .
$$
Otherwise, we have the limiting case discussed in \cite{AAR}:
\begin{align}
 &\lim_{z\to 1^-} \frac{F(a,b;c;z)}{(1-z)^{c-a-b}} 
= \frac{\Gamma(c)\Gamma(a+b-c)}{\Gamma(a)\Gamma(b)}\, ,
\qquad \text{if}\quad c-a-b<0\, .
\label{lim1}
\end{align}
Let us define
\begin{equation*}\label{defH}
 H(a,b;c;z):= \frac{\Gamma(b)\Gamma(c-b)}{\Gamma(c)} F(a,b;c;z)\, .
\end{equation*}
To express $\psi_k$ as a combination of hypergeometric functions, we write
\begin{align*}
 \psi_k(s)
 = &\frac{\sigma_{N-1}}{\sigma_N} \int_0^\pi \left(1-s\cos(\theta)\right) \left(1+s^2-2s\cos(\theta)\right)^{\frac{k-2}{2}}\sin^{N-2}(\theta)\,d \theta\\
 = &\frac{\sigma_{N-1}}{\sigma_N}\left(1+s\right)^{k-2} \int_0^\pi  \left(1-s\cos(\theta)\right)\left( 1-\frac{4s}{(1+s)^2} \cos^2\left(\frac{\theta}{2}\right)\right)^{\frac{k-2}{2}}\sin^{N-2}\left(\theta\right)\, d \theta\\
 = &\frac{\sigma_{N-1}}{\sigma_N}\left(1+s\right)^{k-2} \int_0^\pi \left( 1-\frac{4s}{(1+s)^2} \cos^2\left(\frac{\theta}{2}\right)\right)^{\frac{k-2}{2}}\sin^{N-2}\left(\theta\right)\, d \theta\\
 &-\frac{\sigma_{N-1}}{\sigma_N}\left(1+s\right)^{k-2} s\int_0^\pi \cos(\theta) \left( 1-\frac{4s}{(1+s)^2} \cos^2\left(\frac{\theta}{2}\right)\right)^{\frac{k-2}{2}}\sin^{N-2}\left(\theta\right)\, d \theta\\
 =: &f_1(s)-f_2(s)\, .
\end{align*}
Now, we use the change of variable $t=\cos^2\left(\theta/2\right)$ to get
\begin{align*}
 f_1(s)
 = &\frac{\sigma_{N-1}}{\sigma_N}\left(1+s\right)^{k-2} \int_0^\pi \left( 1-\frac{4s}{(1+s)^2} \cos^2\left(\frac{\theta}{2}\right)\right)^{\frac{k-2}{2}}\sin^{N-2}\left(\theta\right)\, d \theta\\
 = &\frac{\sigma_{N-1}}{\sigma_N}\left(1+s\right)^{k-2} 2^{N-2} \int_0^1 \left(1-\frac{4s}{(1+s)^2} t\right)^{\frac{k-2}{2}} t^{\frac{N-3}{2}} \left(1-t\right)^{\frac{N-3}{2}} \, dt \\
 =&\frac{\sigma_{N-1}}{\sigma_N}\left(1+s\right)^{k-2} 2^{N-2} H\left(a,b_1;c_1;z\right)
\end{align*}
with
$$
a:=1-\frac{k}{2},
\quad b_1:=\frac{N-1}{2},
\quad c_1:=N-1,
\quad z:=\frac{4s}{(1+s)^2}\,.
$$
Let us define $h_1(s):=f_1(s)$, and
\begin{equation*}
 h_2(s):= \frac{\sigma_{N-1}}{\sigma_N}\left(1+s\right)^{k-2} 2^{N-2} H\left(a,b_2;c_2;z\right)
\end{equation*}
with
$$
a:=1-\frac{k}{2},
\quad b_2:=\frac{N-1}{2},
\quad c_2:=N-1,
\quad z:=\frac{4s}{(1+s)^2}\,.
$$
Then
\begin{align*}
 f_2(s)
 =&\frac{\sigma_{N-1}}{\sigma_N}\left(1+s\right)^{k-2} s\int_0^\pi \cos(\theta) \left( 1-\frac{4s}{(1+s)^2} \cos^2\left(\frac{\theta}{2}\right)\right)^{\frac{k-2}{2}}\sin^{N-2}\left(\theta\right)\, d \theta\\
 =&-sh_1(s)+2sh_2(s)
\end{align*}
by the same change of variable. We conclude
\begin{equation}\label{psi2}
 \psi_k(s)=(1+s)h_1(s)-2sh_2(s)\,,  \qquad s \in [0,1) \cap (1,\infty)\, .
\end{equation}
\begin{figure}[h!]
\begin{subfigure}{.5\textwidth}
  \centering
  \includegraphics[width=\linewidth]{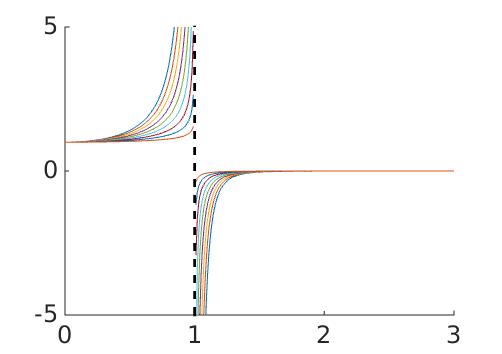}
  \caption{$-N<k<2-N$} 
  \label{fig:psik_fig1}
\end{subfigure}%
\begin{subfigure}{.5\textwidth}
  \centering
  \includegraphics[width=\linewidth]{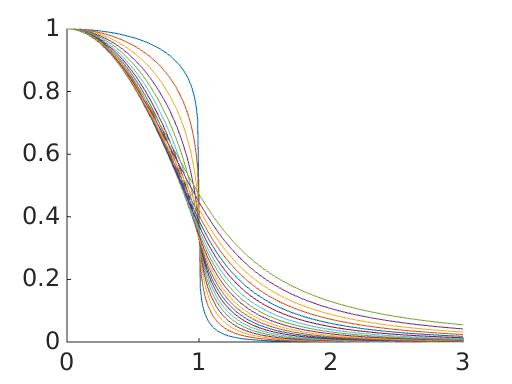}
  \caption{$2-N<k<0$} 
  \label{fig:psik_fig4}
\end{subfigure}
\caption{$\psi_k$ for different values of $k$ with $N=6$, increasing $k$ by $0.2$ for each plot.}
\label{fig:psik}
\end{figure}

Let us now study the behaviour of $\psi_k$ in more detail for $k\neq 2-N$. For any fixed $s \in [0,1) \cap (1,\infty)$,
$$
|\psi_k(s)|\leq \frac{1}{\sigma_N} \int_{\partial B(0,1)} |e_1-sx|^{k-1}\, d \sigma(x) <\infty
$$
and by the dominated convergence theorem, it is easy to see that $\psi_k$ is continuous on $s \in [0,1) \cap (1,\infty)$ for any $-N<k<2-N$ and $2-N<k<0$. A singularity occures at $s=1$ if $k<2-N$, however this singularity is integrable in the range $1-N<k<2-N$. 

In order to handle the expression of the mean-field potential gradient, it is important to understand the behaviour of $\psi_k$ at the limits of the integral $0$ and $\infty$ as well as at the singularity $s=1$.

\begin{lemma}[Behaviour at $0$]\label{Beh0}
For $\alpha >-1$, $-N<k<0$ and small $s>0$,
\begin{equation}\label{lowerlim1}
\psi_k(s)s^{\alpha}=s^{\alpha}+O\left(s^{\alpha+1}\right)\, .
\end{equation}
\end{lemma}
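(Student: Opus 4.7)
The plan is to evaluate $\psi_k$ at $s=0$ and Taylor-expand using the integral representation \eqref{psi1}. First, substituting $s=0$ gives $A(0,\theta)=1$, hence
\[
\psi_k(0) = \frac{\sigma_{N-1}}{\sigma_N}\int_0^\pi \sin^{N-2}(\theta)\,d\theta = 1,
\]
where the last identity is the standard spherical-coordinate formula $\sigma_N = \sigma_{N-1}\int_0^\pi \sin^{N-2}(\theta)\,d\theta$. So the constant term in the expansion of $\psi_k$ is exactly the right one, and the whole task reduces to controlling the remainder.

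Next I would justify a uniform Taylor expansion of the integrand around $s=0$. Writing $A(s,\theta)^2 = (1-s)^2 + 2s\,(1-\cos\theta) \geq (1-s)^2$, we see that $A(s,\theta)$ is bounded away from zero, uniformly in $\theta\in[0,\pi]$, for $s$ in a small neighborhood of $0$. Since $u\mapsto u^{(k-2)/2}$ is smooth away from $u=0$, the integrand of \eqref{psi1} is smooth in $s$ near $s=0$, uniformly in $\theta$, and we may expand
\[
A(s,\theta)^{k-2} = 1 + (2-k)\,s\cos(\theta) + O(s^2)
\]
uniformly in $\theta$. Multiplying by $(1-s\cos\theta)$ and integrating against $\sin^{N-2}(\theta)$, the $O(s)$ coefficient is a multiple of $\int_0^\pi \cos(\theta)\sin^{N-2}(\theta)\,d\theta$, which vanishes by the symmetry $\theta\mapsto\pi-\theta$. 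Therefore $\psi_k(s) = 1 + O(s^2)$, so that $\psi_k(s)\,s^\alpha = s^\alpha + O(s^{\alpha+2})$ for any $\alpha\in\mathbb{R}$, and in particular $\psi_k(s)\,s^\alpha = s^\alpha + O(s^{\alpha+1})$ as claimed. The hypothesis $\alpha>-1$ plays no role in the asymptotic identity itself; it is presumably what is needed for the subsequent integration of $\psi_k(s)s^\alpha$ near zero in the applications of the lemma.

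The only mild technical obstacle is making the Taylor expansion of $A(s,\theta)^{k-2}$ uniform in $\theta$, since $k-2<0$ makes the base factor potentially singular as $A\to 0$. The explicit lower bound $A(s,\theta)\geq |1-s|$ together with the smoothness of $u\mapsto u^{(k-2)/2}$ on $[1/2,2]$ (say) dispatches this concern, allowing one to differentiate under the integral sign and apply Taylor's theorem with a uniform remainder estimate.
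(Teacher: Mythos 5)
Your proof is correct and takes essentially the same route as the paper, whose entire argument is to obtain $\psi_k(0)=1$ by the computation of \cite[Lemma 4.4]{Dong} and read off \eqref{lowerlim1} from the regularity of the integrand of \eqref{psi1} at $s=0$; you have simply carried this out in full, including the uniform lower bound $A(s,\theta)\geq |1-s|$ that justifies the Taylor expansion. As a small bonus, your symmetry cancellation of $\int_0^\pi \cos(\theta)\sin^{N-2}(\theta)\,d\theta$ yields the sharper remainder $O\left(s^{\alpha+2}\right)$, consistent with the explicit factor of $s$ in the formula for $\psi_k'$ quoted later in the appendix, though only $O\left(s^{\alpha+1}\right)$ is claimed and needed.
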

\begin{proof}
 Following the same argument as in \cite[Lemma 4.4]{Dong}, we obtain $\psi_k(0)=1$ for any $-N<k<0$, and so \eqref{lowerlim1} follows.
\end{proof}
Similarly, extending the argument in \cite[Lemma 4.4]{Dong} to  $-N<k<0$, we have
\begin{lemma}[Behaviour at $\infty$]\label{Behinfty}
For $-N<k<0$, 
\begin{equation}\label{lim0}
\lim_{s \to \infty} s^{2-k} \psi_k(s)=\frac{N+k-2}{N}\, .
\end{equation}
\end{lemma}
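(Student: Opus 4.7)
The starting point is the integral representation \eqref{psi1}:
\[
\psi_k(s) = \frac{\sigma_{N-1}}{\sigma_N}\int_0^\pi (1-s\cos\theta)(1+s^2-2s\cos\theta)^{(k-2)/2}\sin^{N-2}\theta\,d\theta.
\]
Since $1+s^2-2s\cos\theta = s^2(1 - 2\cos\theta/s + 1/s^2)$, I would pull out $s^{k-2}$ from the kernel to obtain
\[
s^{2-k}\psi_k(s) = \frac{\sigma_{N-1}}{\sigma_N}\int_0^\pi (1-s\cos\theta)F_s(\theta)\sin^{N-2}\theta\,d\theta,
\qquad F_s(\theta):=\left(1-\tfrac{2\cos\theta}{s}+\tfrac{1}{s^2}\right)^{(k-2)/2}.
\]
Then I would split this as $I_1(s) - s\,I_2(s)$, where $I_1(s):=\int_0^\pi F_s\sin^{N-2}\theta\,d\theta$ and $I_2(s):=\int_0^\pi \cos\theta\,F_s\sin^{N-2}\theta\,d\theta$, and use the key parity observation $\int_0^\pi\cos\theta\sin^{N-2}\theta\,d\theta=0$ (by the change $\theta\mapsto\pi-\theta$) to rewrite
\[
sI_2(s) = s\int_0^\pi \cos\theta\,\bigl(F_s(\theta)-1\bigr)\sin^{N-2}\theta\,d\theta.
\]

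The crucial preliminary estimate is a uniform bound on $F_s$. Writing $1-2\cos\theta/s+1/s^2 = (1/s-\cos\theta)^2+\sin^2\theta$, for $s\ge 2$ one has $(1-1/s)^2\le 1-2\cos\theta/s+1/s^2\le (1+1/s)^2$, so $1-2\cos\theta/s+1/s^2\in[1/4,9/4]$, and hence $F_s(\theta)\le 2^{2-k}$ uniformly in $\theta\in[0,\pi]$ and $s\ge 2$ (since $k-2<0$). Dominated convergence immediately yields $I_1(s)\to\int_0^\pi\sin^{N-2}\theta\,d\theta=\sigma_N/\sigma_{N-1}$. For $sI_2(s)$, I would apply the mean value theorem to the smooth function $u\mapsto u^{(k-2)/2}$ on $[1/4,9/4]$: this gives
\[
F_s(\theta)-1 = \tfrac{k-2}{2}\bigl(-\tfrac{2\cos\theta}{s}+\tfrac{1}{s^2}\bigr) + R_s(\theta),
\qquad |R_s(\theta)|\le C/s^2,
\]
so $s\cos\theta(F_s-1) = -(k-2)\cos^2\theta + O(1/s)$ uniformly in $\theta$. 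Dominated convergence then gives
\[
sI_2(s)\ \longrightarrow\ -(k-2)\int_0^\pi \cos^2\theta\sin^{N-2}\theta\,d\theta.
\]

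Finally I would evaluate the two surviving integrals. Using $\cos^2\theta=1-\sin^2\theta$ together with the standard reduction $\int_0^\pi\sin^N\theta\,d\theta=\tfrac{N-1}{N}\int_0^\pi\sin^{N-2}\theta\,d\theta$, one finds $\int_0^\pi\cos^2\theta\sin^{N-2}\theta\,d\theta = \tfrac{1}{N}\int_0^\pi\sin^{N-2}\theta\,d\theta$. Combining,
\[
\lim_{s\to\infty}s^{2-k}\psi_k(s) = \frac{\sigma_{N-1}}{\sigma_N}\bigl(1+\tfrac{k-2}{N}\bigr)\int_0^\pi\sin^{N-2}\theta\,d\theta = \frac{N+k-2}{N},
\]
using $\int_0^\pi\sin^{N-2}\theta\,d\theta=\sigma_N/\sigma_{N-1}$.

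The only real obstacle is the passage to the limit inside $sI_2(s)$, which naively diverges because of the factor $s$. The resolution, and the heart of the argument, is that the obstruction vanishes identically after exploiting the parity identity $\int_0^\pi\cos\theta\sin^{N-2}\theta\,d\theta=0$, which reduces the estimate to controlling $s(F_s-1)$ — a finite quantity whose limit is supplied by the first-order Taylor expansion. Everything else is routine bookkeeping.
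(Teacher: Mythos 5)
Your proof is correct; I checked the individual steps. The algebra $1-2\cos\theta/s+1/s^2=(1/s-\cos\theta)^2+\sin^2\theta$ gives the stated uniform bounds $F_s\in[(1-1/s)^2,(1+1/s)^2]^{(k-2)/2}$ for $s\ge 2$, the Taylor remainder is indeed $O(1/s^2)$ uniformly in $\theta$ because $u\mapsto u^{(k-2)/2}$ is $C^2$ on $[1/4,9/4]$ for any $k\in(-N,0)$, the parity identity $\int_0^\pi\cos\theta\sin^{N-2}\theta\,d\theta=0$ kills the divergent $O(s)$ term, the Wallis reduction gives $\int_0^\pi\cos^2\theta\sin^{N-2}\theta\,d\theta=\tfrac1N\int_0^\pi\sin^{N-2}\theta\,d\theta$, and the normalisation $\sigma_{N-1}\int_0^\pi\sin^{N-2}\theta\,d\theta=\sigma_N$ is consistent with the paper's convention $\sigma_N=2\pi^{N/2}/\Gamma(N/2)$, so the limit is $1+(k-2)/N=(N+k-2)/N$ as claimed. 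Your route is, however, genuinely different from the paper's: the paper offers no proof at all, merely asserting that the statement follows by ``extending the argument in [Dong, Lemma 4.4]'' to the range $-N<k<0$, and elsewhere its quantitative analysis of $\psi_k$ goes through the hypergeometric representation \eqref{psi2}, where the analogous asymptotics (there, near $s=1$) hinge on the vanishing of a leading Gamma-coefficient ($B_0=0$ in the proof of Lemma \ref{Beh1}). Your parity cancellation is precisely the integral-side manifestation of that same mechanism, but obtained with nothing beyond dominated convergence and a first-order Taylor expansion; in exchange you get a self-contained, elementary proof valid uniformly across the whole range $-N<k<0$ (including the singular regime $-N<k\le 1-N$, which is unproblematic here since only $s$ large, away from the singularity at $s=1$, is involved), whereas the hypergeometric route would require tracking the $z\to0$ expansion of $F(a,b;c;z)$ with $z=4s/(1+s)^2$ and keeping first-order terms to capture the cancellation. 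One could sharpen your argument to a rate, $s^{2-k}\psi_k(s)=\tfrac{N+k-2}{N}+O(1/s)$, for free, which the paper's citation does not record.
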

Further, it is obvious from \eqref{psi1} that $\psi_k(s)>0$ for $s \in (0,1)$. From \cite{Dong},
$$
\psi_k'(s)=\left(\frac{\sigma_{N-1}}{\sigma_N}\right) \frac{(k-2)(N+k-2)}{(N-1)} s \int_0^\pi\sin^N(\theta)A(s,\theta)^{k-4}\, d\theta\, ,
\qquad s \in [0,1) \cap (1,\infty)
$$
and hence $\psi_k$ is strictly decreasing for $k>2-N$ and strictly increasing for $k<2-N$. It then follows from \eqref{lim0} that in the super-Newtonian regime $k>2-N$, $\psi_k$ converges to zero as $s\to\infty$, is finite and continuous at $s=1$, and strictly positive on $[0,\infty)$ (Figure \ref{fig:psik_fig4}). In the sub-Newtonian regime $-N<k<2-N$ on the other hand, the monotonicity of $\psi_k$ and the fact that $\psi_k$ converges to 0 as $s \to \infty$ imply that 
$$
\lim_{s\to 1^-}\psi_k(s)=+\infty\, ,
\qquad
\lim_{s\to 1^+}\psi_k(s)=-\infty\, ,
$$
and so we conclude that $\psi_k<0$  on $(1,\infty)$ if $-N<k<2-N$ (Figure \ref{fig:psik_fig1}). We summarise these observations in the following lemma:

\begin{lemma}[Overall Behaviour]\label{lem:psibehaviour}
Let $\psi_k$ be as defined in \eqref{psidef}.
 \begin{enumerate}[(i)]
  \item If $2-N<k<0$, then $\psi_k$ is continuous, positive and strictly decreasing on $[0,\infty)$.
  \item If $-N<k<2-N$, then $\psi_k$ is continuous, positive and strictly increasing on $[0,1)$, and it is continuous, negative and strictly increasing on $(1,\infty)$. Further, it has a singularity at $s=1$ which is integrable for $1-N<k<2-N$.
 \end{enumerate}
\end{lemma}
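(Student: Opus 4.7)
The plan is to stitch the lemma together from the explicit representations \eqref{psidef}, \eqref{psi1} and \eqref{psi2}, the derivative formula for $\psi_k'$ recorded just before the lemma (taken from \cite{Dong}), and the asymptotic Lemma~\ref{Behinfty}. The paragraph preceding the statement already sketches nearly every step; the lemma is essentially a clean packaging of those observations, so the proof is mostly bookkeeping.

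First I would establish continuity on each of $[0,1)$ and $(1,\infty)$: for $s \neq 1$ the integrand $(1 - s\, e_1 \cdot z)|e_1 - sz|^{k-2}$ in \eqref{psidef} is uniformly bounded in a small neighbourhood of $s$, so dominated convergence gives continuity there. Positivity on $[0,1)$ drops out of \eqref{psi1}, since $1 - s\cos\theta \geq 1-s>0$ and the remaining factors are non-negative. Next I would read strict monotonicity on each connected component off the identity
\[
\psi_k'(s) = \frac{\sigma_{N-1}}{\sigma_N}\frac{(k-2)(N+k-2)}{N-1}\, s \int_0^\pi \sin^N(\theta)\, A(s,\theta)^{k-4}\, d\theta
\]
displayed above. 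Because $k<0<2$ forces $k-2<0$, the sign of $\psi_k'$ is opposite to that of $N+k-2$: strictly decreasing in the super-Newtonian regime $k>2-N$ (case~(i)) and strictly increasing in the sub-Newtonian regime $k<2-N$ (case~(ii)).

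With these ingredients the two cases close quickly. In case~(i), strict decrease combined with $\psi_k(s)\to 0$ as $s\to\infty$ (Lemma~\ref{Behinfty}) forces $\psi_k>0$ on $(1,\infty)$; continuity across $s=1$ is inherited from the hypergeometric form \eqref{psi2}, since for $k>2-N$ the exponent $c-a-b$ governing the behaviour of $H(a,b;c;z)$ at $z=1$ remains non-negative in both summands, so both hypergeometric pieces extend continuously up to $z=1$. Gluing with positivity and monotonicity on $[0,1)$ yields the full statement on $[0,\infty)$. In case~(ii), strict increase on $(1,\infty)$ together with $\psi_k\to 0$ at infinity pins $\psi_k<0$ there, producing the required sign jump across $s=1$. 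Plugging the limiting identity \eqref{lim1} into \eqref{psi2} yields an asymptotic of the form $\psi_k(s)\sim \pm K\,|1-s|^{N+k-2}$ on the two sides of $s=1$, which simultaneously confirms $\psi_k(1^\pm)=\pm\infty$ and shows that the singularity is integrable precisely when $N+k-2>-1$, i.e. $k>1-N$.

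The only step requiring any care is the continuity at $s=1$ in case~(i) and the precise asymptotic near $s=1$ in case~(ii); both reduce to the standard behaviour of Gauss hypergeometric functions at the boundary of their disk of convergence, applied to \eqref{psi2} via \eqref{lim1}. Everything else is a direct consequence of the formulae and limits already catalogued in the appendix.
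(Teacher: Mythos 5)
Your overall architecture matches the paper's (the lemma is proven there in the paragraph preceding its statement): positivity on $[0,1)$ read off \eqref{psi1}, strict monotonicity on each component from the derivative formula quoted from \cite{Dong}, and the sign on $(1,\infty)$ obtained by combining monotonicity with the decay $\psi_k(s)\to 0$ from \eqref{lim0}. Those parts are correct. However, both of the steps you yourself flag as the only delicate ones are wrong as written, and they are precisely where the real work sits. In case (i), you claim that for $k>2-N$ the exponent $c-a-b$ governing the boundary behaviour of the hypergeometric pieces in \eqref{psi2} is non-negative, so each piece extends continuously to $z=1$. In fact $c_1-a-b_1=c_2-a-b_2=(N+k-3)/2$, which is \emph{negative} for all $2-N<k<3-N$; for $N=3$ this is the entire super-Newtonian range $(-1,0)$. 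There each of $h_1$ and $h_2$ individually blows up at $s=1$ like $|1-s|^{N+k-3}$, and finiteness of $\psi_k(1)$ is a cancellation phenomenon, not termwise continuity: one must either cite \cite[Lemma 4.4]{Dong}, as the paper does in the proof of Lemma \ref{Beh1}(1), or rewrite $\psi_k=(1+s)(h_1-h_2)+(1-s)h_2$ and verify that the leading singular coefficient ($B_0$ in the paper's notation) vanishes.

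The same oversight is more damaging in case (ii). You assert that ``plugging \eqref{lim1} into \eqref{psi2}'' yields $\psi_k(s)\sim \pm K\,|1-s|^{N+k-2}$. A direct substitution does no such thing: it gives each of the two terms in \eqref{psi2} of order $|1-s|^{N+k-3}$, and since $N+k-3<-1$ throughout the sub-Newtonian range, your derivation taken at face value would conclude a \emph{non-integrable} singularity for all $-N<k<2-N$, contradicting the very integrability claim ($1-N<k<2-N$) you are trying to prove. The correct exponent $N+k-2$ emerges only after the decomposition $\psi_k=(1+s)(h_1-h_2)+(1-s)h_2$, the Gamma-function identity showing $B_0=0$ (so that $h_1-h_2$ is one order less singular than each term), and the identification of the positive constant $\gamma$ in \eqref{gamma} fixing the signs $\pm K_1$ on the two sides of $s=1$; this is exactly the content of the proof of Lemma \ref{Beh1}(2). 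So the monotonicity-and-sign half of your proof is sound and coincides with the paper's route, but the behaviour at $s=1$ --- continuity across $s=1$ in case (i), and the blow-up rate and hence integrability in case (ii) --- is a genuine gap: the stated mechanism produces the wrong exponent, and the right one requires the cancellation argument you omitted.
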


Using the hypergeometric function representation of $\psi_k$, we can characterise its behaviour near the singularity.

\begin{lemma}[Behaviour at $1$]\label{Beh1}
For $\alpha\in \RR$ and $\eps>0$ small, we have
\begin{enumerate}[(1)]
 \item in the super-Newtonian regime $2-N<k<0$ and for $s=1\pm\eps$:
  \begin{equation*}
  \psi_k(s)s^{\alpha}=\psi_k(1)+O\left(\eps\right)\, ,
\end{equation*}
\item in the sub-Newtonian regime $-N<k<2-N$ and
\begin{enumerate}[(i)]
 \item for $s=1-\eps$:
 \begin{align}\label{upperlim1-}
 \psi_k\left(s\right)s^\alpha
 =K_1 \eps^{N+k-2} + K_2\eps^{N+k-1}+O\left( \eps^{N+k}\right)\, ,
\end{align}
 \item for $s=1+\eps$:
 \begin{align}\label{upperlim1+}
 \psi_k\left(s\right)s^\alpha
 = -K_1 \eps^{N+k-2} + K_3\eps^{N+k-1}+O\left( \eps^{N+k}\right)\, ,
\end{align}
\end{enumerate}
where
\begin{gather}
 K_1=\left(\frac{\sigma_{N-1}}{\sigma_N}\right) \frac{\gamma}{2}>0\,,
 \qquad
 K_2[\alpha]=-\left(\frac{\sigma_{N-1}}{\sigma_N}\right) \left(\frac{B_1+\gamma(1-N+2\alpha)}{4}\right)\, ,\label{K1K2}\\
 K_3[\alpha]=-\left(\frac{\sigma_{N-1}}{\sigma_N}\right) \left(\frac{B_1+\gamma(2k+N-5+2\alpha)}{4}\right)\notag
\end{gather}
and
\begin{equation}
 \gamma=\frac{\Gamma(c_2-b_2)\Gamma(a+b_2-c_2)}{\Gamma(a)}>0\,.\label{gamma}
\end{equation}
\end{enumerate}
\end{lemma}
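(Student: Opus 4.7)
The plan is to start from the hypergeometric decomposition $\psi_k(s) = (1+s) h_1(s) - 2s h_2(s)$ written down in \eqref{psi2}, and apply the Gauss--Kummer connection formula for $F(a,b;c;z)$ around $z = 1$ to each of $h_1(s), h_2(s)$ as functions of $z = 4s/(1+s)^2$. For the super-Newtonian case (1) I would invoke the continuity of $\psi_k$ at $s=1$ already recorded in Lemma \ref{lem:psibehaviour}, combine with $s^\alpha = (1\pm\eps)^\alpha = 1 + O(\eps)$, and conclude by a direct Taylor expansion of the integrand of \eqref{psi1} in the parameter $s$; uniform integrability is not an issue here because $k > 2-N$ keeps the integrand bounded in a neighbourhood of $s = 1$.

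The sub-Newtonian case (2) is the substantive one, and it rests on a pair of algebraic identities between the hypergeometric parameters. A direct computation yields
\begin{equation*}
c_1 - a - b_1 \;=\; c_2 - a - b_2 \;=\; \tfrac{N+k-3}{2},
\end{equation*}
and, after reducing the Gamma-function ratios via $\Gamma(x+1)=x\Gamma(x)$, the coefficient of the singular term $(1-z)^{(N+k-3)/2}$ in each of $h_1$ and $h_2$ collapses to the common value $\gamma$ of \eqref{gamma}. The combination $\psi_k = (1+s)h_1 - 2s h_2$ therefore has singular part
\begin{equation*}
\mathrm{sing}(\psi_k(s)) = \tfrac{\sigma_{N-1}}{\sigma_N}\,\gamma\,2^{N-2}(1+s)^{k-2}(1-z)^{(N+k-3)/2}\Bigl[(1-s) + (1-z)\,M(s) + O\bigl((1-z)^{2}\bigr)\Bigr],
\end{equation*}
where $M(s) = (1+s)B_1^{(1)} - 2s B_1^{(2)}$ collects the linear coefficient coming from $F(c_i-a,c_i-b_i;c_i-a-b_i+1;1-z) = 1 + B_1^{(i)}(1-z) + \ldots$. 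The emergence of the prefactor $(1-s)$ is the decisive algebraic cancellation: on its own each of $h_1, h_2$ diverges like $\eps^{N+k-3}$, but this cancellation promotes the divergence to the advertised order $\eps^{N+k-2}$.

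To extract explicit constants, I set $s = 1 \mp \eps$, use $1-z = \eps^{2}/(2\mp\eps)^{2}$, and substitute the Taylor expansions $(1+s)^{k-2} = 2^{k-2}(1\mp\tfrac{k-2}{2}\eps + O(\eps^{2}))$, $(1-z)^{(N+k-3)/2} = 2^{-(N+k-3)}\eps^{N+k-3}(1\mp\tfrac{N+k-3}{2}\eps + O(\eps^{2}))$ and $s^\alpha = 1 \mp \alpha\eps + O(\eps^{2})$, then collect powers of $\eps$. The $\eps^{N+k-2}$ term immediately yields $K_1 = (\sigma_{N-1}/\sigma_N)\gamma/2$, with sign inherited from $(1-s) = \pm\eps$; the $\eps^{N+k-1}$ term combines the four linear-in-$\eps$ corrections (three from the Taylor expansions above, plus one from the sub-leading bracket $(1-z)M(s)$ with $M(1)=2(B_1^{(1)}-B_1^{(2)})$ accounting for the $B_1$ in $K_2, K_3$), and yields $K_2[\alpha]$ and $K_3[\alpha]$ with the $\alpha$-dependence stated in \eqref{K1K2}. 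The main technical obstacle is verifying the Gamma-function collapse $D_1=D_2=\gamma$ cleanly: it rests on the non-obvious identity $c_i - a - b_i = (N+k-3)/2$ independent of $i$ together with a careful reduction of the ratios $\Gamma(c_i-b_i)\Gamma(a+b_i-c_i)/\Gamma(a)$; once this is in place, the remaining expansion is methodical but elementary Taylor bookkeeping.
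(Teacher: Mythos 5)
Your plan is, in substance, the paper's own proof: part (1) from continuity of $\psi_k$ at $s=1$, and part (2) from the decomposition \eqref{psi2}, the identity $c_1-a-b_1=c_2-a-b_2=\tfrac{N+k-3}{2}$, the collapse of both singular coefficients to the common $\gamma$ of \eqref{gamma} (your ``Gamma-function collapse'' is precisely the paper's step $B_0=0$, obtained there via \eqref{lim1}), and then the same Taylor bookkeeping in $\delta=\eps/(2\mp\eps)$; your factor $(1-s)$ is the paper's rearrangement $\psi_k=(1+s)(h_1-h_2)+(1-s)h_2$ in different clothes. Incidentally, your identity silently corrects a typo in the paper, which displays $b_2,c_2$ equal to $b_1,c_1$; the change of variables actually gives $b_2=\tfrac{N+1}{2}$, $c_2=N$, and only then do the two values of $c_i-a-b_i$ coincide nontrivially.

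There is, however, a genuine gap, and it sits exactly where you write $\mathrm{sing}(\psi_k)$: the Kummer connection formula you invoke produces, besides the singular branch, a regular part $\tfrac{\Gamma(b_i)\Gamma(c_i-a-b_i)}{\Gamma(c_i-a)}F(a,b_i;a+b_i-c_i+1;1-z)=:C_i\left(1+O(1-z)\right)$ in each $h_i$, and your write-up discards it without argument. It does not cancel: in the combination $(1+s)h_1-2sh_2$ the regular parts contribute, at $s=1\mp\eps$, the constant $\tfrac{\sigma_{N-1}}{\sigma_N}\,2^{N+k-3}(C_1-C_2)+O(\eps)$, and $C_1/C_2=\tfrac{2N-4+k}{N-1}\neq 1$ throughout $-N<k<2-N$. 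A constant is not $O(\eps^{N+k})$, so the remainder claimed in \eqref{upperlim1-}--\eqref{upperlim1+} does not follow from your argument. As a cross-check, at the Newtonian exponent $N=3$, $k=-1$ one has $\gamma=2$, $C_1=-2$, $C_2=-4$, and the singular and regular pieces contribute $\tfrac12+\tfrac12=1$ below the singularity and $-\tfrac12+\tfrac12=0$ above it, reproducing the shell theorem \eqref{Newtonianpsi} \emph{only if} the regular piece is retained. The paper's own proof has the same blind spot: rearranging $\sum_n A_n z^n/n!$ into $\sum_m(-1)^mB_m\delta^{2m}/m!$ presumes analyticity of $H/(1-z)^{c-a-b}$ at $z=1$, which fails because of the branch term $(1-z)^{(3-N-k)/2}$ carrying $C_1-C_2$; indeed $B_1$, which enters $K_2,K_3$ in \eqref{K1K2}, diverges whenever $N+k>1$. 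A smaller instance of the same issue affects your part (1): Taylor expanding the integrand of \eqref{psi1} does not give the stated $O(\eps)$, since $\psi_k'$ blows up at $s=1$ when $2<N+k<3$ and the true modulus of continuity there is $O(\eps^{N+k-2})$. These defects are inherited from the lemma as stated rather than introduced by you, and the load-bearing content for Case 4 of Lemma \ref{lem:induction} — the equal-and-opposite singular parts $\pm K_1\eps^{N+k-2}$, whose derivation in your proposal is correct, together with integrability of the remainder (a bounded extra term is harmless there) — survives; but a complete proof must track the regular branch of the connection formula rather than drop it.
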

\begin{proof}
(1) follows directly from the fact that $\psi_k$ is continuous at $s=1$ \cite[Lemma 4.4]{Dong}. In order to prove (2), we make use of expression \eqref{psi2} for $\psi_k$ in terms of hypergeometric functions and known expansions around the point of singularity. Denoting $\delta:=\eps/|2-\eps|>0$, we have for any $\beta>0$,
\begin{equation}\label{deltabeta}
 \delta^\beta
 =\left(\frac{\eps}{2}\right)^\beta + \beta \left(\frac{\eps}{2}\right)^{\beta+1} +O\left(\eps^{\beta+2}\right)\, .
\end{equation}
From \eqref{psi2} we can write 
$$\psi_k(s) =(1+s)h_1(s)-2sh_2(s) 
 =(1+s)\left(h_1(s)-h_2(s)\right) +(1-s)h_2(s)\, , $$ 
and hence, denoting $z=1-\delta^2$, we obtain for $s=1-\eps$:
\begin{align}
  2^{2-N}\frac{\sigma_{N}}{\sigma_{N-1}} \psi_k(1-\eps)
 =\,
 &\left(2-\eps\right)^{k-1} 
 \left(H\left(a,b_1;c_1;z\right)-H\left(a,b_2;c_2;z\right)\right)\notag\\
 &+\eps \left(2-\eps\right)^{k-2} H\left(a,b_2;c_2;z\right)\, \notag\\
 =\, 
 &\left(2-\eps\right)^{k-1} \delta^{N+k-3}
 \left(\frac{H\left(a,b_1;c_1;z\right)}{(1-z)^{c_1-a-b_1}}-\frac{H\left(a,b_2;c_2;z\right)}{(1-z)^{c_2-a-b_2}}\right)\notag\\
 &+ \eps \left(2-\eps\right)^{k-2} \delta^{N+k-3}\left(\frac{H\left(a,b_2;c_2;z\right)}{(1-z)^{c_2-a-b_2}}\right)\,. \label{psi3a}
\end{align}
Similarly, above the singularity point at $s=1+\eps$, we obtain:
\begin{align}
 2^{2-N}\frac{\sigma_{N}}{\sigma_{N-1}}  \psi_k(1+\eps)
 =\, 
 &\left(2+\eps\right)^{k-1} \delta^{N+k-3}
 \left(\frac{H\left(a,b_1;c_1;z\right)}{(1-z)^{c_1-a-b_1}}-\frac{H\left(a,b_2;c_2;z\right)}{(1-z)^{c_2-a-b_2}}\right)\notag\\
 &-\eps \left(2+\eps\right)^{k-2} \delta^{N+k-3}\left(\frac{H\left(a,b_2;c_2;z\right)}{(1-z)^{c_2-a-b_2}}\right)\,. \label{psi3b}
\end{align}
Using the power series expression \eqref{powerseriesF} for hypergeometric functions, we can write
\begin{gather*}
 \left(\frac{H\left(a,b_1;c_1;z\right)}{(1-z)^{c_1-a-b_1}}-\frac{H\left(a,b_2;c_2;z\right)}{(1-z)^{c_2-a-b_2}}\right)
 = \sum_{n=0}^\infty A_n \frac{z^n}{n!} = \sum_{m=0}^\infty \frac{(-1)^m B_m}{m!}\, \delta^{2m}\, ,\\
 B_m:=\sum_{n=m}^\infty \frac{A_n}{(n-m)!}\, ,\\
 A_n:=\left(\frac{(c_1-a)_n(c_1-b_1)_n}{(c_1)_n} - b_1 \frac{(c_2-a)_n(c_2-b_2)_n}{(c_2)_n}\right)\frac{\Gamma(b_1)\Gamma(c_1-b_1)}{\Gamma(a)}\, .
\end{gather*}
In the singularity regime $-N<k<2-N$, we have
$$
c_1-a-b_1=c_2-a-b_2=\frac{N+k-3}{2}<0\, ,
$$
and so we can make use of \eqref{lim1} to show that the leading order term vanishes:
\begin{align*}
 B_0&=\lim_{\delta\to0} \sum_{m=0}^\infty \frac{(-1)^m B_m}{m!}\, \delta^{2m}
 =\lim_{z\to 1^-} \left(\frac{H\left(a,b_1;c_1;z\right)}{(1-z)^{c_1-a-b_1}}-\frac{H\left(a,b_2;c_2;z\right)}{(1-z)^{c_2-a-b_2}}\right)\\
 &=\frac{\Gamma(c_1-b_1)\Gamma(a+b_1-c_1)}{\Gamma(a)} - \frac{\Gamma(c_2-b_2)\Gamma(a+b_2-c_2)}{\Gamma(a)}
 =0\, .
\end{align*}
Hence
\begin{align*}
& \frac{H\left(a,b_1;c_1;z\right)}{(1-z)^{c_1-a-b_1}}-\frac{H\left(a,b_2;c_2;z\right)}{(1-z)^{c_2-a-b_2}}
= -B_1\delta^2+O(\delta^4)\, ,\\
&\frac{H\left(a,b_2;c_2;z\right)}{(1-z)^{c_2-a-b_2}}
=\frac{\Gamma(c_2-b_2)\Gamma(a+b_2-c_2)}{\Gamma(a)} +O\left(\delta^2\right):=\gamma+O\left(\delta^2\right)\, .
\end{align*}
Substituting these estimates and making use of \eqref{deltabeta}, \eqref{psi3a} becomes
\begin{align*}
  2^{2-N}\frac{\sigma_{N}}{\sigma_{N-1}} \psi_k(1-\eps)
 =\,
 &\eps \left(2-\eps\right)^{k-2} \left(\gamma \delta^{N+k-3}+O\left(\delta^{N+k-1}\right)\right)\,, \notag\\
 &+\left(2-\eps\right)^{k-1}
 \left( -B_1 \delta^{N+k-1}+O\left(\delta^{N+k+1}\right)\right)\notag\\
 =\,
 &\eps \left[2^{k-2}-\eps(k-2)2^{k-3}+O\left(\eps^2\right)\right] \notag \\
 &\times\left[\gamma \left(\frac{\eps}{2}\right)^{N+k-3} + \gamma\left(N+k-3\right)\left(\frac{\eps}{2}\right)^{N+k-2} + O\left(\eps^{N+k-1}\right)\right]\notag\\
 &+\left[2^{k-1}+O\left(\eps\right)\right]
 \left[ -B_1 \left(\frac{\eps}{2}\right)^{N+k-1}+O\left(\eps^{N+k}\right)\right]\notag\\
 =\,
 &\gamma 2^{-N+1}\eps^{N+k-2}
 + \gamma2^{-N}\left[\left(N+k-3\right)+(2-k)\right]\eps^{N+k-1}\notag\\
 &-B_1 2^{-N}\eps^{N+k-1}+O\left(\eps^{N+k}\right)\notag\\ 
 =\,
 &\gamma 2^{-N+1}\eps^{N+k-2}
 + 2^{-N}\left[\gamma\left(N-1\right)-B_1\right]\eps^{N+k-1}
 +O\left(\eps^{N+k}\right)\, .
\end{align*}
Similarly, \eqref{psi3b} has expansion
\begin{align*}
  2^{2-N}\frac{\sigma_{N}}{\sigma_{N-1}} \psi_k(1+\eps)
 =\,
 &-\eps \left(2+\eps\right)^{k-2} \left(\gamma \delta^{N+k-3}+O\left(\delta^{N+k-1}\right)\right)\,, \notag\\
 &+\left(2+\eps\right)^{k-1}
 \left( -B_1 \delta^{N+k-1}+O\left(\delta^{N+k+1}\right)\right)\notag\\
 =\,
 &-\eps \left[2^{k-2}+\eps(k-2)2^{k-3}+O\left(\eps^2\right)\right] \notag \\
 &\times\left[\gamma \left(\frac{\eps}{2}\right)^{N+k-3} + \gamma\left(N+k-3\right)\left(\frac{\eps}{2}\right)^{N+k-2} + O\left(\eps^{N+k-1}\right)\right]\notag\\
 &+\left[2^{k-1}+O\left(\eps\right)\right]
 \left[ -B_1 \left(\frac{\eps}{2}\right)^{N+k-1}+O\left(\eps^{N+k}\right)\right]\notag\\
 =\,
 &-\gamma 2^{-N+1}\eps^{N+k-2}
 + \gamma2^{-N}\left[-\left(N+k-3\right)+(2-k)\right]\eps^{N+k-1}\notag\\
 &-B_1 2^{-N}\eps^{N+k-1}+O\left(\eps^{N+k}\right)\notag\\ 
 =\,
 &-\gamma 2^{-N+1}\eps^{N+k-2}
 + 2^{-N}\left[\gamma\left(5-N-2k\right)-B_1\right]\eps^{N+k-1}
 +O\left(\eps^{N+k}\right)\, .
\end{align*}
We conclude
\begin{align*}
 \psi_k(1-\eps)=
 \left(\frac{\sigma_{N-1}}{\sigma_{N}}\right) \left(\frac{\gamma}{2}\right)\eps^{N+k-2}
 + \left(\frac{\sigma_{N-1}}{\sigma_{N}}\right) \left(\frac{\gamma\left(N-1\right)-B_1}{4}\right)\eps^{N+k-1}
 +O\left(\eps^{N+k}\right)\, ,
\end{align*}
\begin{align*}
 \psi_k(1+\eps)=
 -\left(\frac{\sigma_{N-1}}{\sigma_{N}}\right) \left(\frac{\gamma}{2}\right)\eps^{N+k-2}
 + \left(\frac{\sigma_{N-1}}{\sigma_{N}}\right) \left(\frac{\gamma\left(5-N-2k\right)-B_1}{4}\right)\eps^{N+k-1}
 +O\left(\eps^{N+k}\right)\, ,
\end{align*}
and so (2)(i)-(ii) directly follow.
\end{proof}

\end{appendix}


%
%
%
%

\section*{Acknowledgments}
\small{JAC was partially supported by the Royal Society via a Wolfson Research Merit Award. FH acknowledges support from the EPSRC grant number EP/H023348/1 for
the Cambridge Centre for Analysis.}


\bibliographystyle{abbrv}
\bibliography{biblio}

\begin{thebibliography}{10}

\bibitem{AmGiSa05}
L.~Ambrosio, N.~Gigli, and G.~Savar\'e.
\newblock {\em Gradient flows in metric spaces and in the space of probability
  measures}.
\newblock Lectures in Mathematics. Birkh\"auser, 2005.

\bibitem{AAR}
G.~E. Andrews, R.~Askey, and R.~Roy.
\newblock {\em Special functions}, volume~71 of {\em Encyclopedia of
  Mathematics and its Applications}.
\newblock Cambridge University Press, Cambridge, 1999.

\bibitem{BL}
S.~Bian and J.-G. Liu.
\newblock Dynamic and steady states for multi-dimensional {K}eller-{S}egel
  model with diffusion exponent {$m>0$}.
\newblock {\em Comm. Math. Phys.}, 323(3):1017--1070, 2013.

\bibitem{BCC08}
A.~Blanchet, V.~Calvez, and J.~A. Carrillo.
\newblock Convergence of the mass-transport steepest descent scheme for the
  subcritical {P}atlak-{K}eller-{S}egel model.
\newblock {\em SIAM J. Numer. Anal.}, 46(2):691--721, 2008.

\bibitem{BCC12}
A.~Blanchet, E.~A. Carlen, and J.~A. Carrillo.
\newblock Functional inequalities, thick tails and asymptotics for the critical
  mass {P}atlak-{K}eller-{S}egel model.
\newblock {\em J. Funct. Anal.}, 262(5):2142--2230, 2012.

\bibitem{BCL}
A.~Blanchet, J.~A. Carrillo, and P.~Lauren{\c{c}}ot.
\newblock Critical mass for a {P}atlak-{K}eller-{S}egel model with degenerate
  diffusion in higher dimensions.
\newblock {\em Calc. Var. Partial Differential Equations}, 35(2):133--168,
  2009.

\bibitem{BlaCaMa07}
A.~Blanchet, J.~A. Carrillo, and N.~Masmoudi.
\newblock Infinite time aggregation for the critical {P}atlak-{K}eller-{S}egel
  model in {$\Bbb R^2$}.
\newblock {\em Comm. Pure Appl. Math.}, 61(10):1449--1481, 2008.

\bibitem{BlaDoPe06}
A.~Blanchet, J.~Dolbeault, and B.~Perthame.
\newblock Two dimensional {Keller-Segel} model in {$\RR^2$}: optimal critical
  mass and qualitative properties of the solution.
\newblock {\em Electron. J. Differential Equations}, 2006(44):1--33
  (electronic), 2006.

\bibitem{BDGV}
M.~Bonforte, J.~Dolbeault, G.~Grillo, and J.~L. V{\'a}zquez.
\newblock Sharp rates of decay of solutions to the nonlinear fast diffusion
  equation via functional inequalities.
\newblock {\em Proc. Natl. Acad. Sci. USA}, 107(38):16459--16464, 2010.

\bibitem{CaCa06}
V.~Calvez and J.~Carrillo.
\newblock Volume effects in the {Keller-Segel} model: energy estimates
  preventing blow-up.
\newblock {\em J. Math. Pures Appl.}, 86:155--175, 2006.

\bibitem{CaCa11}
V.~Calvez and J.~A. Carrillo.
\newblock Refined asymptotics for the subcritical {K}eller-{S}egel system and
  related functional inequalities.
\newblock {\em Proc. Amer. Math. Soc.}, 140(10):3515--3530, 2012.

\bibitem{CCHCetraro}
V.~Calvez, J.~A. Carrillo, and F.~Hoffmann.
\newblock The geometry of diffusing and self-attracting particles in a
  one-dimensional fair competition regime.
\newblock {\em preprint arXiv:1603.07767}.

\bibitem{CG}
V.~Calvez and T.~O. Gallou{\"e}t.
\newblock Particle approximation of the one dimensional {K}eller-{S}egel
  equation, stability and rigidity of the blow-up.
\newblock {\em Discrete Contin. Dyn. Syst.}, 36(3):1175--1208, 2016.

\bibitem{CaLo92}
E.~Carlen and M.~Loss.
\newblock Competing symmetries, the logarithmic {HLS} inequality and {O}nofri's
  inequality on $\mathbb{S}\sp n$.
\newblock {\em Geom. Funct. Anal.}, 2:90--104, 1992.

\bibitem{CF}
E.~A. Carlen and A.~Figalli.
\newblock Stability for a {GNS} inequality and the log-{HLS} inequality, with
  application to the critical mass {K}eller-{S}egel equation.
\newblock {\em Duke Math. J.}, 162(3):579--625, 2013.

\bibitem{CaMcCVi03}
J.~Carrillo, R.~McCann, and C.~Villani.
\newblock Kinetic equilibration rates for granular media and related equations:
  entropy dissipation and mass transportation estimates.
\newblock {\em Rev. Mat. Iberoamericana}, 19:1--48, 2003.

\bibitem{CaMcCVi06}
J.~Carrillo, R.~McCann, and C.~Villani.
\newblock Contractions in the $2$-{W}asserstein length space and thermalization
  of granular media.
\newblock {\em Arch. Ration. Mech. Anal.}, 179:217--263, 2006.

\bibitem{CCV}
J.~A. Carrillo, D.~Castorina, and B.~Volzone.
\newblock Ground states for diffusion dominated free energies with logarithmic
  interaction.
\newblock {\em SIAM J. Math. Anal.}, 47(1):1--25, 2015.

\bibitem{CHVY}
J.~A. Carrillo, S.~Hittmeir, B.~Volzone, and Y.~Yao.
\newblock Nonlinear aggregation-diffusion equations: Radial symmetry and long
  time asymptotics.
\newblock {\em preprint arXiv:1603.07767}.

\bibitem{CaTo00}
J.~A. Carrillo and G.~Toscani.
\newblock Asymptotic {$L^1$}-decay of solutions of the porous medium equation
  to self-similarity.
\newblock {\em Indiana Univ. Math. J.}, 49(1):113--142, 2000.

\bibitem{CLM}
P.-H. Chavanis, P.~Lauren{\c{c}}ot, and M.~Lemou.
\newblock Chapman-{E}nskog derivation of the generalized {S}moluchowski
  equation.
\newblock {\em Phys. A}, 341(1-4):145--164, 2004.

\bibitem{CM}
P.~H. Chavanis and R.~Mannella.
\newblock Self-gravitating {B}rownian particles in two dimensions: the case of
  {$N=2$} particles.
\newblock {\em Eur. Phys. J. B}, 78(2):139--165, 2010.

\bibitem{CLW}
L.~Chen, J.-G. Liu, and J.~Wang.
\newblock Multidimensional degenerate {K}eller-{S}egel system with critical
  diffusion exponent {$2n/(n+2)$}.
\newblock {\em SIAM J. Math. Anal.}, 44(2):1077--1102, 2012.

\bibitem{CW}
L.~Chen and J.~Wang.
\newblock Exact criterion for global existence and blow up to a degenerate
  {K}eller-{S}egel system.
\newblock {\em Doc. Math.}, 19:103--120, 2014.

\bibitem{ChiPe81}
S.~Childress and J.~Percus.
\newblock Nonlinear aspects of chemotaxis.
\newblock {\em Math. Biosciences}, 56:217--237, 1981.

\bibitem{CL}
T.~Cie{\'s}lak and P.~Lauren{\c{c}}ot.
\newblock Global existence vs. blowup in a one-dimensional
  {S}moluchowski-{P}oisson system.
\newblock In {\em Parabolic problems}, volume~80 of {\em Progr. Nonlinear
  Differential Equations Appl.}, pages 95--109. Birkh\"auser/Springer Basel AG,
  Basel, 2011.

\bibitem{DoPe04}
J.~Dolbeault and B.~Perthame.
\newblock Optimal critical mass in the two dimensional {Keller-Segel} model in
  $\mathbb{R}\sp 2$.
\newblock {\em C. R. Math. Acad. Sci. Paris}, 339:611--616, 2004.

\bibitem{Dong}
H.~Dong.
\newblock The aggregation equation with power-law kernels: ill-posedness, mass
  concentration and similarity solutions.
\newblock {\em Comm. Math. Phys.}, 304(3):649--664, 2011.

\bibitem{JaLu92}
W.~J\"{a}ger and S.~Luckhaus.
\newblock On explosions of solutions to a system of partial differential
  equations modelling chemotaxis.
\newblock {\em Trans. Amer. Math. Soc.}, 329:819--824, 1992.

\bibitem{JKO}
R.~Jordan, D.~Kinderlehrer, and F.~Otto.
\newblock The variational formulation of the {F}okker-{P}lanck equation.
\newblock {\em SIAM J. Math. Anal.}, 29(1):1--17, 1998.

\bibitem{KeSe70}
E.~Keller and L.~Segel.
\newblock Initiation of slime mold aggregation viewed as an instability.
\newblock {\em J. Theor. Biol.}, 26:399--415, 1970.

\bibitem{KeSe71a}
E.~Keller and L.~Segel.
\newblock Model for chemotaxis.
\newblock {\em J. Theor. Biol.}, 30:225--234, 1971.

\bibitem{kesavan}
S.~Kesavan.
\newblock {\em Topics in functional analysis and applications}.
\newblock John Wiley \& Sons, Inc., New York, 1989.

\bibitem{LieLo01}
E.~Lieb and M.~Loss.
\newblock {\em Analysis}, volume~14 of {\em Graduate Studies in Math.}
\newblock Amer. Math. Soc., Providence, RI, second edition, 2001.

\bibitem{LW}
J.-G. Liu and J.~Wang.
\newblock A note on {$L^\infty$}-bound and uniqueness to a degenerate
  {K}eller-{S}egel model.
\newblock {\em Acta Appl. Math.}, 142:173--188, 2016.

\bibitem{Nagai95}
T.~Nagai.
\newblock Blow-up of radially symmetric solutions to a chemotaxis system.
\newblock {\em Adv. Math. Sci. Appl.}, 5:581--601, 1995.

\bibitem{Nanjundiah73}
V.~Nanjundiah.
\newblock Chemotaxis, signal relaying and aggregation morphology.
\newblock {\em J. Theor. Biol.}, 42:63--105, 1973.

\bibitem{Otto}
F.~Otto.
\newblock The geometry of dissipative evolution equations: the porous medium
  equation.
\newblock {\em Comm. Partial Differential Equations}, 26(1-2):101--174, 2001.

\bibitem{Perthame06}
B.~Perthame.
\newblock {\em Transport equations in biology}.
\newblock Frontiers in mathematics. Birkh\"auser, 2006.

\bibitem{Rudin87}
W.~Rudin.
\newblock {\em Real and complex analysis}.
\newblock McGraw-Hill Book Co., New York, third edition, 1987.

\bibitem{Sickel}
W.~Sickel.
\newblock Composition operators acting on {S}obolev spaces of fractional
  order---a survey on sufficient and necessary conditions.
\newblock In {\em Function spaces, differential operators and nonlinear
  analysis ({P}aseky nad {J}izerou, 1995)}, pages 159--182. Prometheus, Prague,
  1996.

\bibitem{Silvestre}
L.~Silvestre.
\newblock Regularity of the obstacle problem for a fractional power of the
  {L}aplace operator.
\newblock {\em Comm. Pure Appl. Math.}, 60(1):67--112, 2007.

\bibitem{Stein}
E.~M. Stein.
\newblock {\em Singular integrals and differentiability properties of
  functions}.
\newblock Princeton Mathematical Series, No. 30. Princeton University Press,
  Princeton, N.J., 1970.

\bibitem{Strohmer2008}
G.~Str{\"o}hmer.
\newblock Stationary states and moving planes.
\newblock In {\em Parabolic and {N}avier-{S}tokes equations. {P}art 2},
  volume~81 of {\em Banach Center Publ.}, pages 501--513. Polish Acad. Sci.
  Inst. Math., Warsaw, 2008.

\bibitem{Sugi2}
Y.~Sugiyama.
\newblock Application of the best constant of the {S}obolev inequality to
  degenerate {K}eller-{S}egel models.
\newblock {\em Adv. Differential Equations}, 12(2):121--144, 2007.

\bibitem{Sugi1}
Y.~Sugiyama.
\newblock The global existence and asymptotic behavior of solutions to
  degenerate quasi-linear parabolic systems of chemotaxis.
\newblock {\em Differential Integral Equations}, 20(2):133--180, 2007.

\bibitem{VazquezFDE}
J.~L. V{\'a}zquez.
\newblock {\em Smoothing and decay estimates for nonlinear diffusion
  equations}, volume~33 of {\em Oxford Lecture Series in Mathematics and its
  Applications}.
\newblock Oxford University Press, Oxford, 2006.
\newblock Equations of porous medium type.

\bibitem{VazquezPME}
J.~L. V{\'a}zquez.
\newblock {\em The porous medium equation}.
\newblock Oxford Mathematical Monographs. The Clarendon Press, Oxford
  University Press, Oxford, 2007.
\newblock Mathematical theory.

\bibitem{Villani03}
C.~Villani.
\newblock {\em Topics in optimal transportation}, volume~58 of {\em Graduate
  Studies in Math.}
\newblock Amer. Math. Soc, Providence, 2003.

\bibitem{Yao}
Y.~Yao.
\newblock Asymptotic behavior for critical {P}atlak-{K}eller-{S}egel model and
  a repulsive-attractive aggregation equation.
\newblock {\em Ann. Inst. H. Poincar\'e Anal. Non Lin\'eaire}, 31(1):81--101,
  2014.

\end{thebibliography}

\end{document}